\pgfplotsset{compat=1.7}
\newcommand{\PP}{\mathbb{P}}
\newcommand{\RR}{\mathbb R}
\newcommand{\OO}{\mathcal O}
\newcommand{\Eff}{\overline{\mathrm{Eff}}}
\newcommand{\Psef}{\overline{\mathrm{Eff}}}
\newcommand{\Nef}{\mathrm{Nef}}
\newcommand{\ra}{\rightarrow}
\newcommand{\BPF}{\mathrm{BPF}}
\newcommand*{\shom}{\mathcal{H}\kern -.5pt om} %thanks tex stackexchange
\newcommand{\twopartdef}[4]
{
	\left\{
		\begin{array}{ll}
			#1 & \mbox{if } #2 \\
			#3 & \mbox{if } #4
		\end{array}
	\right.
}
\newtheorem{theorem}{Theorem}[section]
\newtheorem{lemma}[theorem]{Lemma}
\newtheorem{proposition}[theorem]{Proposition}
\newtheorem{corollary}[theorem]{Corollary}
\newtheorem{conjecture}[theorem]{Conjecture}
\theoremstyle{definition}
\newtheorem{question}[theorem]{Question}
\newtheorem{remark}[theorem]{Remark}
\newtheorem{definition}[theorem]{Definition}
\title{Effective cycles on universal hypersurfaces}
\author[G.  Smith]{Geoffrey Smith}
\address{Department of Mathematics, Statistics and CS \\University of Illinois at Chicago, Chicago, IL 60607}
\email{geoff@uic.edu}
\begin{document}
\begin{abstract}
We study the effective cones of cycles on universal hypersurfaces on a projective variety $X$, particularly focusing on the case of universal hypersurfaces in $\PP^n$.
We determine the effective cones of cycles on the universal conic over $\PP^2$. We also determine every effective cone of cycles of dimension at most 6 or equal to 10 on any universal plane curve. 
\end{abstract}
\maketitle
\section{Introduction and statement of results}
The effective cones of divisors and curves on a smooth projective variety have been extensively studied.
The pseudoeffective cones of cycles of intermediate dimension are much less well-understood, and have been computed only in a handful of cases \cite{DELV11, Ful11,  Ott15, CC15, CLO16, Kop18, BKLV19, PPS21}.

In this article, we extend this study to universal hypersurfaces.
Let $X$ be a smooth projective variety of dimension $n$.
Given an effective divisor class $H$ on $X$, let $Y_H\cong \PP(H^0(X,H)^*)$ be the space of all divisors of class $H$ on $X$. Then the \emph{universal hypersurface} $X_H$ is the incidence correspondence $X_H \subset X\times Y_H$ defined by
\[
X_H=\{(p,D)\in X\times Y_H\vert p\in D\}.
\]
%Since $Y_H$ is basepoint free, $X_H$ has the structure of a projective bundle over $X$. 
If $H$ is basepoint free, the projection $\pi_1:X_H\ra X$ is a projective bundle over $X$.
This article is devoted to the study of effective cycles on the varieties $X_H$, especially when $X$ is a projective space.

For many varieties $X$, given $i<\dim(X)$, we can determine the nef cones $\Nef^i(X_H)$---and hence the pseudoeffective cones $\Psef_i(X_H)$---explicitly. Let $\xi$ be the pullback to $X_H$ of the hyperplane class on $Y_H$. 
\begin{theorem}\label{lowCodimension}
Let $X$ be smooth, and let $H$ be a divisor class on $X$  such that for every $i<\dim(X)$, $\Psef_i(X)$ is the closure of a cone spanned by the numerical classes of cycles supported on a divisor of class $H$.  If $0\leq i< \dim(X)$, then \[\Nef^i(X_H)=\sum_{0\leq j\leq i} \xi^{i-j}\pi_1^*(\Nef^j(X)) .\]
\end{theorem}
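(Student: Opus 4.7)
The plan is to establish both inclusions separately. The inclusion $\supseteq$ is the straightforward direction, relying on the semiampleness of $\xi = \pi_2^* h$. For any $\alpha \in \Nef^j(X)$ and effective cycle $Z \subseteq X_H$ of dimension $i$, I would represent $\xi^{i-j}$ by $\pi_2^{-1}(L)$ for a general linear subspace $L \subseteq Y_H$ of codimension $i-j$, so that $\xi^{i-j} \cdot Z$ is an effective cycle of dimension $j$ on $X_H$. The projection formula gives
\[
\xi^{i-j}\pi_1^*\alpha \cdot Z = \alpha \cdot \pi_{1*}(\xi^{i-j}\cdot Z),
\]
and since $\pi_{1*}(\xi^{i-j}\cdot Z) \in \Psef_j(X)$, the pairing is nonnegative by the nefness of $\alpha$.

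For the reverse inclusion, let $\beta \in \Nef^i(X_H)$ and use the projective bundle formula to decompose $\beta = \sum_{j=0}^i \xi^{i-j}\pi_1^*\alpha_j$ with $\alpha_j \in N^j(X)$; the goal is to show each $\alpha_j \in \Nef^j(X)$. The key geometric input is that for any $D \in |H|$, the fiber $D' = \pi_2^{-1}([D]) \cong D$ satisfies $\xi|_{D'} = 0$, because $\xi$ is pulled back from $Y_H$ and $D'$ is a fiber of $\pi_2$. Hence restricting $\beta$ to $D'$ annihilates every term except the $j = i$ one, yielding $\beta|_{D'} = \alpha_i|_D$ under the isomorphism $\pi_1|_{D'}\colon D' \xrightarrow{\sim} D$. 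Since nefness is preserved under pullback to subvarieties, $\alpha_i|_D \in \Nef^i(D)$ for every $D \in |H|$. The hypothesis that $\Psef_i(X)$ is spanned by cycles on divisors of class $H$ then implies $\alpha_i \in \Nef^i(X)$: any $\gamma \in \Psef_i(X)$ is a limit of effective combinations $\sum_l a_l \gamma_l$ with $\gamma_l \subseteq D_l \in |H|$, and $\alpha_i \cdot \gamma_l = \alpha_i|_{D_l} \cdot \gamma_l \geq 0$ passes to the limit.

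The main obstacle is showing $\alpha_j \in \Nef^j(X)$ for $j < i$. I expect to proceed by induction on $i$: having established $\alpha_i \in \Nef^i(X)$, one writes $\beta - \pi_1^*\alpha_i = \xi \cdot \tilde\beta$ with $\tilde\beta = \sum_{j<i}\xi^{i-1-j}\pi_1^*\alpha_j \in N^{i-1}(X_H)$, and applies the inductive hypothesis once $\tilde\beta$ is shown to be nef. The principal difficulty is that the difference of nef classes is generally not nef, so verifying nefness of $\tilde\beta$ requires a more refined argument. An alternative strategy would be to restrict $\beta$ to $\pi_1^{-1}(D)$ for $D \in |H|$: this is itself a projective bundle over $D$ of the same structural type as $X_H \to X$ (its fibers are projectivized kernels of evaluation maps, with Segre class $1 + H|_D$), so an inductive application of the theorem to this restricted bundle would yield $\alpha_j|_D \in \Nef^j(D)$ for every $j$ and every $D$, after which globalization proceeds via the hypothesis exactly as for $\alpha_i$. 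Either approach has this descent/induction step as the principal technical hurdle.
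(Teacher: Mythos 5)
Your first inclusion is fine and matches the paper's (the paper deduces it from the nefness of the divisor $\xi$ together with the fact that multiplying a nef class by a nef divisor preserves nefness; your semiample/projection-formula version is equivalent). Your treatment of the top component $\alpha_i$ is also correct, and is in fact the paper's argument in disguise for $j=i$: restricting $\beta$ to the fiber $\pi_2^{-1}([D])\cong D$ and pairing with effective $i$-cycles $Z\subseteq D$ is the same as pairing $\beta$ with the cycle $\{(p,D)\mid p\in Z\}\subset X_H$, whose class is $[Z]e_i$.

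The genuine gap is exactly the one you flag yourself: the components $\alpha_j$ with $j<i$, and neither of your proposed continuations closes it. Subtracting $\pi_1^*\alpha_i$ and dividing by $\xi$ fails because $\beta-\pi_1^*\alpha_i$ has no reason to be nef, as you note; and restricting to $\pi_1^{-1}(D)$ does not set up an induction, because $\pi_1^{-1}(D)=\PP(E|_D)$ is not the universal hypersurface of $D$ and the theorem's hypothesis is about $X$ and $H$, not about $D$ and $H|_D$, so there is nothing to induct on. The paper never tries to prove nefness of the $\alpha_j$ one at a time by restriction; it proves the dual statement $\Psef_i(X_H)\supseteq\sum_{0\leq j\leq i}\Psef_j(X)e_j$. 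Concretely, for \emph{each} $j\leq i$ and each spanning class $\beta_\alpha\in\Psef_j(X)$ represented by a $j$-cycle $Z_\alpha$ supported on a divisor $D_\alpha$ of class $H$ (this is where the hypothesis is used for every $j$, not only $j=i$), it takes the cycle $Z'_\alpha=\{(p,D_\alpha)\mid p\in Z_\alpha\}\subset X_H$ of class $\beta_\alpha e_j$ and concludes $\beta_\alpha e_j\in\Psef_i(X_H)$. Since the pairing between $N^i(X_H)=\bigoplus_j\xi^{i-j}\pi_1^*N^j(X)$ and $N_i(X_H)=\bigoplus_j N_j(X)e_j$ is diagonal ($\xi^{i-j}\pi_1^*\alpha_j$ pairs with $\gamma e_k$ as $\delta_{jk}\,\alpha_j\cdot\gamma$), dualizing this containment forces every $\alpha_j$, not just $\alpha_i$, to lie in $\Nef^j(X)$. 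So the missing idea in your write-up is to use the hypothesis to produce pseudoeffective classes $\gamma e_j$ on $X_H$ for all $j\leq i$ against which to test $\beta$; your restriction-to-fibers argument only produces such test classes for $j=i$.
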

Since $\Psef_i(X_H)$ is the dual cone of $\Nef^i(X_H)$, Theorem \ref{lowCodimension} also determines $\Eff_i(X_H)$ for $i<\dim(X)$ whenever its hypotheses are satisfied. Both $\Psef_i(X_H)$ and $\Nef^i(X_H)$ are defined in Section \ref{fl}.
 
Theorem \ref{lowCodimension} applies if $X$ is $\PP^n$ or any other smooth variety $X$ with $\dim(N_i(X))=1$ for all $0\leq i \leq \dim(X)$.
It also applies if $X$ is a del Pezzo surface and $H+K_X$ is effective. 
Finally, it applies to any smooth variety $X$ for which $\Psef_i(X)$ is generated by finitely many effective classes for all $i$, as long as $H$ is sufficiently ample. (See Corollary \ref{mostOfTheStableCone} for a precise statement.)

For $i\geq \dim(X)$ the effective cones of cycles on $X_H$ become more complicated.
In this paper, we will study these cones for the universal hypersurface of degree $d$ in $\PP^n$, which we denote $X_{n,d}$. If we let $H$ be the pullback to $X_{n,d}$ of the hyperplane class on $\PP^n$, we have
\[
N^\bullet(X_{n,d})=A^\bullet(X_{n,d})_\RR=\frac{\RR[\xi, H]}{(H^{n+1}, P(H,\xi))},
\]
where $P$ is a homogeneous polynomial of degree $\binom{n+d}{d}-1$. 

If $d=1$, then $X_{n,1}$ is a rational homogeneous variety on which the nef and effective cones of cycles coincide. We have the following result---a special case of the classification of the effective cycles on rational homogeneous varieties given in \cite{Cos18}.
\begin{proposition}[{\cite[Proposition 2.20]{Cos18}}]\label{hyperplaneCone}
If $i<n$, then
\[
\Nef^i(X_{n,1})=\Psef^i(X_{n,1})=\langle H^{i},H^{i-1}\xi,\ldots, \xi^i\rangle.
\]

Dually, given $0\leq j\leq n-1$ and a $j$-plane $\Lambda\subset \PP^n$, let $Z_i\subset X_{n,1}$ be the $(n-1)$-cycle consisting of all $(p,D)$ with $p\in \Lambda_j$ and $\Lambda_j\subset D$. Then, for any $0\leq i\leq n-1$, we have
\[
\Nef_i(X_{n,1})=\Psef_i(X_{n,1})=\langle \xi^{n-i-1}[Z_0], \xi^{n-i-1}[Z_1],\ldots, \xi^{n-i-1}[Z_i]\rangle.
\]
\end{proposition}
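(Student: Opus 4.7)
The variety $X_{n,1}$ is canonically isomorphic to the two-step flag variety $F(1,n;n+1)$ parametrizing $L \subset W \subset V = \CC^{n+1}$ with $\dim L = 1$ and $\dim W = n$; this is rational homogeneous under $PGL(V)$. On such a variety, $\Nef^k = \Psef^k$ in every codimension, both equal to the positive cone on the Schubert basis of $N^k$---a consequence of Bia\l{}ynicki-Birula specialization (which sends any effective cycle to a $B$-fixed cycle along a generic one-parameter subgroup, hence expresses its class as a nonnegative combination of Schubert classes), Kleiman transversality (which shows Schubert classes are themselves nef by translating against any effective cycle), and the self-duality of the Schubert basis under the Poincar\'e pairing.

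For the primal statement I would apply Theorem~\ref{lowCodimension} to $X = \PP^n$ with $H$ the hyperplane class; the hypothesis holds since $\Psef_j(\PP^n) = \RR_{\geq 0}[\PP^j]$ is realized on a hyperplane for every $j < n$. The conclusion reads $\Nef^i(X_{n,1}) = \langle H^j \xi^{i-j} : 0 \le j \le i\rangle$. Each monomial $H^j \xi^{i-j}$ is represented by the Schubert variety $\{(L, W) : L \subset F_{n+1-j},\, W \supset F_{i-j}\}$ for a chosen flag $F_\bullet$ in $V$; the Poincar\'e polynomial $[n+1]_t \, [n]_t$ of $F(1,n;n+1)$ gives $\dim N^i = i + 1$ for $i < n$, so these monomials exhaust the Schubert basis of $N^i$, and the homogeneity input yields $\Nef^i = \Psef^i$ equal to their positive cone.

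For the dual statement I identify $Z_j = \{(L, W) : L \subset F_{j+1},\, W \supset F_{j+1}\}$ as a codimension-$n$ Schubert variety (taking $\Lambda_j = \PP(F_{j+1})$). Intersecting with a generic $\xi^{n-1-i}$ represented by $\{W \supset F'_{n-1-i}\}$ collapses the conditions on $W$ to yield $\{(L, W) : L \subset F_{j+1},\, W \supset F_{j+1} + F'_{n-1-i}\}$, a Schubert variety of dimension $i$ because $F_{j+1} + F'_{n-1-i}$ generically has dimension $n + j - i$ for $0 \le j \le i \le n - 1$. As $j$ ranges over $\{0, 1, \ldots, i\}$, these $i + 1$ Schubert classes exhaust the basis of $N_i$ (by the same Betti count $i + 1$), so $\Psef_i = \Nef_i$ equals their positive cone. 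The main obstacle is the homogeneity input $\Psef = \Nef$, which I would cite from \cite{Cos18} or assemble from Bia\l{}ynicki-Birula plus Kleiman transversality on $G/P$; the remainder is straightforward combinatorics identifying the specified classes with Schubert basis elements under the Bruhat parametrization.
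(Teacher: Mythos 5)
Your argument is correct, but it is not the paper's internal derivation: it is essentially the proof of the cited source \cite{Cos18}, of which this proposition is stated to be a special case. The paper itself reproves the statement as Corollary \ref{456}, deducing it from Proposition \ref{lowDim}, which applies to $X_{n,d}$ for every $d$, not just $d=1$. That argument never invokes the homogeneous structure: Lemma \ref{projectiveSpaceBPFCycles} exhibits each $e_j$ (for $j\leq i<n$) as a strongly basepoint free class using the universal family of complete intersections of $n-j$ degree-$d$ hypersurfaces, giving $\langle e_0,\ldots,e_i\rangle\subseteq \Psef_i\cap\Nef_i$; the monomials $\xi^i,H\xi^{i-1},\ldots,H^i$ lie in $\Nef^i\cap\Psef^i$ simply as products of the nef, effective divisors $\xi$ and $H$; and dualizing the second containment against the first forces all the cones to coincide. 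What the paper's route buys is uniformity in $d$ (Proposition \ref{lowDim} is needed for all $X_{n,d}$ later) and independence from characteristic-zero tools such as Bia\l{}ynicki-Birula specialization and generic transversality, which matters since the paper works over an algebraically closed field of arbitrary characteristic. What your route buys is a stronger statement on $X_{n,1}$ itself: the Schubert-cone description holds in every dimension, not only $i<n$, and the paper does quietly rely on that stronger form later (equation (\ref{564}) records $\Psef_i(X_{n,1})$ for $i\geq n$, which does not follow from Proposition \ref{lowDim}). Your identifications of $H^j\xi^{i-j}$ and $\xi^{n-1-i}[Z_j]$ with Schubert classes and the Betti count $\dim N^i(X_{n,1})=i+1$ for $i<n$ are all correct.
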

We also determine all cones of pseudoeffective cycles on the universal conic $X_{2,2}$. To write them down, we need to define five new cycles.
Fix a point $p_0\in \PP^2$ and a line $\ell_0\subset \PP^2$ containing $p_0$.
 Let $Z_{4,1},Z_{4,2},Z_{4,3}\subset X_{2,2}$ be 4-cycles defined by
\begin{align*}
Z_{4,1}&=\overline{\{(p,D)\in X_{2,2}\vert p_0\in D, p\in (D\cap \ell_0)\setminus \{p_0\}\}}\\
Z_{4,2}&=\{(p,\ell_1+ \ell_2)\in X_{2,2}\vert  p,p_0\in \ell_1 \} \\
Z_{4,3}&= \{(p,\ell_1+ \ell_2)\in X_{2,2}\vert p\in \ell_1; p_0\in \ell_2\}.
\end{align*}
Typical examples of conics in these 4-cycles are depicted in Figure \ref{figure1} below.
\begin{figure}
\centering{
\subfloat[$H^2$]{
\begin{tikzpicture}
  \node at (2, 1.5)   {D};
%\draw[gray, thick] (-3,0) -- (3,0);
\draw[red, thick] plot[smooth,domain=-2:2] (\x, {\x*\x-1});
\filldraw[black] (-1,0) circle (2pt) node[black, anchor=west]{$p=p_0$};
\end{tikzpicture}
}
\hspace{5em}
\subfloat[$Z_{4,1}$]{
\begin{tikzpicture}
  \node at (2, 1.5)   {D};
\draw[gray, thick] (-3,0) -- (3,0) node[black, anchor=north] {$\ell_0$};
\draw[red, thick] plot[smooth,domain=-2:2] (\x, {\x*\x-1});
\filldraw[black] (-1,0) circle (2pt) node at (-1.2,-.25) {$p_0$};
\filldraw[blue] (1,0) circle (2pt) node[black, anchor=south]{$p$};
\end{tikzpicture}
}

\subfloat[$Z_{4,2}$]{
\begin{tikzpicture}
\draw[red, thick] (-2,-1) -- (1,2);

\filldraw[black] (-1,0) circle (2pt) node at (-.8,-.25) {$p_0$};
\filldraw[blue] (.7,1.7) circle (2pt) node[black, anchor=west]{$p$};
\node at (-.75,.75){$\ell_1$};
\node at (1.4,1) {$\ell_2$};
\draw[red, thick] (0,2) --(3,-1);
\end{tikzpicture}
}
\hspace{5em}
\subfloat[$Z_{4,3}$]{
\begin{tikzpicture}
\node at (-.75,.75){$\ell_1$};
\node at (1.4,1) {$\ell_2$};
\draw[red, thick] (-2,-1) -- (1,2);
\filldraw[black] (-1,0) circle (2pt) node at (-.8,-.25){$p_0$};
\draw[red, thick] (0,2) --(3,-1);
\filldraw[blue] (2,0) circle (2pt) node[black, anchor=north]{$p$};

\end{tikzpicture}
}
\caption{Typical members of cycles on the extremal rays of $\Psef_4(X_{2,2})$. Fixed elements are black, varying elements in color.\label{figure1}}}

\end{figure}
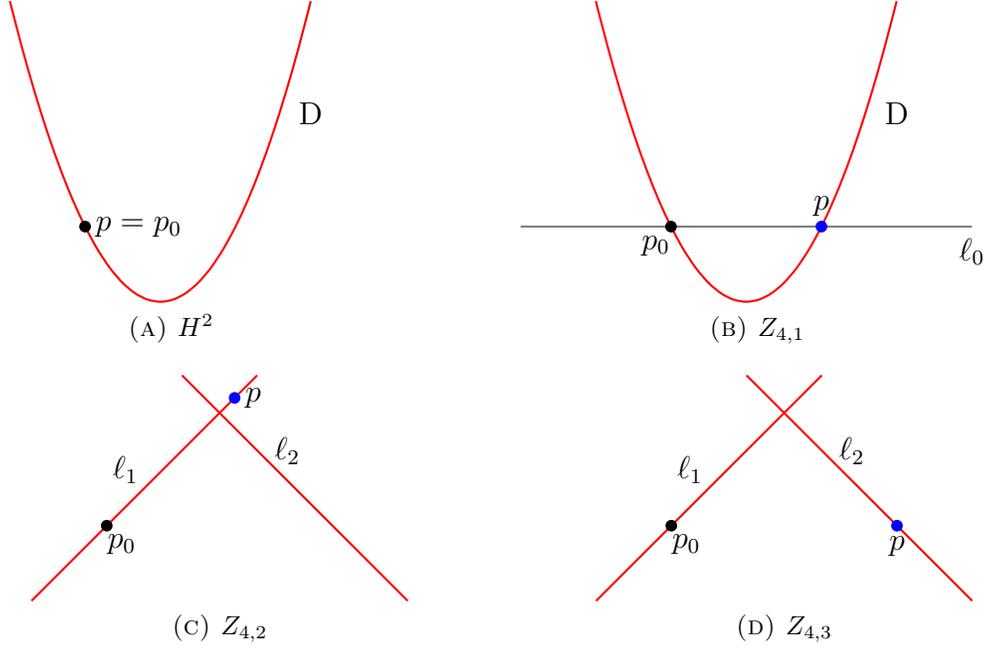

Let $Z_{3,1}$ and $Z_{3,2}$ be 3-cycles defined by 
\begin{align*}
Z_{3,1} &=\{(p, \ell_0+ \ell) \vert p\in \ell_0\}\\
Z_{3,2} &= \{(p,\ell_0+ \ell) \vert p \in \ell\}.
\end{align*}
%give geometric explanation for what's going on with these cycles
\begin{theorem}[Corollary \ref{conicCorollary}]\label{conicCone}

Let $X_{2,2}$ be the universal conic in $\PP^2$.  

Then we have:
\begin{align*}
    \Psef_1(X_{2,2})&=\langle H^2\xi^3, [Z_{3,1}]\xi^2 \rangle\\
    \Psef_2(X_{2,2})&=\langle H^2\xi^2, [Z_{3,1}]\xi, [Z_{3,2}]\xi\rangle\\
    \Psef_3(X_{2,2})&= \langle H^2\xi,  [Z_{3,1}],[Z_{3,2}]\rangle\\
    \Psef_4(X_{2,2})&= \langle H^2, [Z_{4,1}], [Z_{4,2}],[Z_{4,3}]\rangle\\
    \Psef_5(X_{2,2})&=\langle \xi, H\rangle.
\end{align*}
Moreover, all the cycles appearing above span extremal rays. In particular, $\Psef_i(X_{2,2})$ is simplicial if and only if $i\neq 4$.
\end{theorem}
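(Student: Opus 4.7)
The five cones fall into three groups. My plan is to combine Theorem \ref{lowCodimension} (for the boundary cases), explicit class computations in the ring $A^\bullet(X_{2,2})_\RR$, and a movability-implies-nef argument for the less-obvious nef classes.

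I would first compute the class of each auxiliary cycle by evaluating three intersection numbers against a basis of the complementary Chow group. The underlying arithmetic uses $H^2\xi^4 = 1$ and $H\xi^5 = 2$ (read off from $[X_{2,2}] = 2H + \xi$ in $A^1(\PP^2 \times \PP^5)$) together with the relation $\xi^5 = 2H\xi^4 - 4H^2\xi^3$. Geometrically each intersection reduces to a Schubert-style count in $\PP^5$---for example, how many reducible conics with a prescribed factor pass through $k$ generic points; one finds in particular $[Z_{3,1}] = H\xi^2 - 2H^2\xi$ and $[Z_{4,1}] = H\xi - H^2$.

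For the boundary cases $i = 1, 5$, Theorem \ref{lowCodimension} applies to $X = \PP^2$ with $H = \OO_{\PP^2}(2)$ because every effective cycle on $\PP^2$ of dimension less than $2$ is contained in a (possibly reducible) conic. This immediately gives $\Nef^1(X_{2,2}) = \langle H, \xi\rangle$, and by duality $\Psef_5 = \langle H, \xi\rangle$ and $\Psef_1 = \langle H^2\xi^3, [Z_{3,1}]\xi^2\rangle$. For the intermediate simplicial cases $i = 2, 3$, the containment of the listed cone in $\Psef_i$ is immediate from effectivity; the reverse is established by dualizing. The dual cone has three extremal rays, two of which are products of the nef divisors $H$ and $\xi$. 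The third---for $i=2$ it is $H(\xi - H) \in N^2$, which equals $[Z_{4,1}]$---is nef by the following movability argument: as the pair $(p_0,\ell_0)$ varies over the universal flag in $\PP^2$, the cycle $Z_{4,1}$ sweeps out $X_{2,2}$, so its intersection with any effective cycle of complementary dimension can be made transverse and has nonnegative degree. The analogous construction, multiplied by $\xi$, supplies the nef class needed for $\Psef_3$. Extremality of each of the three generators then follows because each is annihilated by two of the three dual nef classes.

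The main obstacle is the non-simplicial case $i = 4$. Here $\Psef_4$ has four extremal rays in the three-dimensional space $N_4$, so the dual $\Nef^4$ also has four extremal rays. Two are the obvious products $H^2\xi^2$ and $\xi^4$; the other two work out to $2H\xi^3 - 3H^2\xi^2$ and $3H^2\xi^2 - H\xi^3 + \xi^4$, and verifying their nefness is the crux of the proof. Neither is a product of nef divisors. My plan is to realize them as classes of effective movable cycles, possibly built from the universal node locus $N = \{(p,C) \in X_{2,2} : p \text{ is a singular point of } C\}$---a codimension-2 cycle whose class one computes to be $H^2 + H\xi + \xi^2$---together with other covering families analogous to $Z_{4,1}$. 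Once the nefness of these four classes is established, the four-facet pairing analysis forces each of the four generators to be extremal, completing the identification of $\Psef_4$.
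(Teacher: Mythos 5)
Your reductions for $i\le 3$ and the class computations are sound and essentially follow the paper's route: $\Psef_1$ comes from Theorem \ref{lowCodimension} by duality, and for $i=2,3$ the paper likewise sandwiches the cone between the obviously effective classes and the dual of $\langle \xi^i,\,H^2\xi^{i-2},\,H\xi^{i-1}-H^2\xi^{i-2}\rangle$, proving the third class nef by exhibiting $Z_{4,1}$ as a member of a flat covering family (Lemma \ref{367}, via miracle flatness) --- your ``movability'' argument is the same idea and can be made rigorous the same way. One small slip: Theorem \ref{lowCodimension} plus duality determines $\Psef_1$ (the dual of $\Nef^1$), not $\Psef_5=\Psef^1$; to get the divisor cone you still need to know that $H$ and $\xi$ each lie on the boundary of $\Psef^1$, i.e.\ that the curve classes $e_0,e_1$ are nef (Lemma \ref{projectiveSpaceBPFCycles} / Proposition \ref{lowDim}). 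This is easy (both projections contract curves) but is not supplied by the theorem you cite.

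The genuine gap is the case $i=4$, which you correctly identify as the crux and then leave as a plan. You have the right dual cone --- $\Nef^4$ is spanned by $\xi^4$, $H^2\xi^2$, $2H\xi^3-3H^2\xi^2$, and $\xi^4-H\xi^3+3H^2\xi^2$ --- but the proposal to realize the last two as classes of movable effective cycles (``possibly built from the universal node locus'') is speculative and, for $\xi^4-H\xi^3+3H^2\xi^2$, there is no evidence it can be carried out; no construction is given and none is obvious. The paper does \emph{not} prove that class nef. Instead it proves $2H\xi^3-3H^2\xi^2$ nef by intersecting with the family of conics through three marked points (Lemma \ref{443}), and replaces the fourth ray by the effective class $\eta\xi=\xi^4-H\xi^3+H^2\xi^2$ (conics containing a fixed line, marked off that line), which is \emph{not} nef: it is positive only on cycles not supported on the image of $\mu:X_{2,1}\times Y_{2,1}\to X_{2,2}$ (Proposition \ref{sortaBPF}). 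The upper bound is then the sum of the dual of $\langle \eta\xi,\,2H\xi^3-3H^2\xi^2,\,\xi^4,\,H\xi^3,\,H^2\xi^2\rangle$ with $\mu_*\Eff_4(X_{2,1}\times Y_{2,1})$, the latter computed explicitly by the product formula from the known cones of the universal hyperplane. This ``nef away from a controlled locus, plus control of the locus'' decomposition is the essential idea your proposal is missing; without it (or an actual proof of nefness of $\xi^4-H\xi^3+3H^2\xi^2$), the inclusion $\Psef_4(X_{2,2})\subseteq\langle H^2,[Z_{4,1}],[Z_{4,2}],[Z_{4,3}]\rangle$ is not established.
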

\begin{remark}
This article is inspired by the paper \cite{Ful11}, in which Fulger shows that the effective cones of cycles on a projective bundle $\PP E$ associated to a vector bundle $E$ on a curve $C$ are determined by the Harder-Narasimhan filtration of $E$. Unlike what we see in Theorem \ref{conicCone}, all of the cones $\Psef_i(\PP E)$ are simplicial since $N_i(\PP E)$ has rank at most 2 when $E$ is a vector bundle over a curve.
\end{remark}
In other cases, we determine some, but not all of the effective cones. Echoing \cite{CLO16}, the general tendency is for the complexity of $\Psef_i(X_H)$ with $H$ fixed to increase with $i$. Likewise, the complexity of $\Psef_i(X_H)$ with $i$ fixed tends to \emph{decrease} as $H$ is replaced with a more positive divisor. This latter tendency comes from the existence of a product map $X_{H_1}\times Y_{H_2}\ra X_{H_1+H_2}$ for any effective $H_1,H_2$ with $H_1, H_1+H_2$ basepoint free. This decreasing complexity as $H$ becomes more ample inspires the following.
\begin{conjecture}\label{stabilizationV2}
Suppose every cone $\Psef_i(X)$ is generated by a finite set of effective cycles. Then for any nonnegative integer $i$, and $H$ sufficiently ample(depending on $i$), we have the following
\begin{enumerate}
\item  $\Psef_i(X_H)$ is generated by a finite set of effective cycles. \item If $H'$ is a basepoint free cycle with $H'-H$ represented by an effective divisor $D'$, then the  map $\Psef_i(X_{H})\ra \Psef_i(X_{H'})$ induced by the map $X_H\ra X_{H'}$ sending $(p,D)$ to $(p,D+ D')$ is an isomorphism.
\end{enumerate}
\end{conjecture}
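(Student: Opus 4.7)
The plan is to build everything on the product morphism
\[
\mu: X_H\times Y_{H''}\ra X_{H'}, \qquad (p,D,D'')\mapsto (p,D+D''),
\]
where $H''=H'-H$, which is surjective whenever $H$ and $H'$ are basepoint free. Fixing $D'\in|H''|$ and restricting $\mu$ to $X_H\times\{D'\}$ recovers the particular closed embedding $\iota:X_H\hookrightarrow X_{H'}$ from the statement. The general idea is to use $\mu$ and $\iota$ to move cycles back and forth between $X_H$ and $X_{H'}$, and to bound the complexity of $\Psef_i(X_{H'})$ by that of $\Psef_i(X_H)$ once $H$ is large enough that all ``essential'' cycles are visible on $X_H$.

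I would start with part (2), since it is more structural. The first thing to check is that $\iota_*:N_i(X_H)\ra N_i(X_{H'})$ is injective for $H$ sufficiently ample, which should follow by exhibiting a left inverse: pick a generic pencil of divisors in $|H''|$ through $D'$ and use the induced rational section $X_{H'}\dashrightarrow X_H$ (or, more robustly, intersect with the class Poincar\'e-dual to a generic point-class pulled back from the $Y_{H''}$ factor under a suitable blowup of $\mu$). The harder direction is surjectivity onto $\Psef_i(X_{H'})$: given an irreducible $i$-cycle $W\subset X_{H'}$, I would try to realize $[W]$ as $\mu_*([Z]\times[Y_{H''}]^j)$ up to a multiple for some effective $Z$ on $X_H$, using the product formula together with the fact that $Y_{H''}$ is a projective space whose effective cycles are all powers of the hyperplane class. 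Equivalently, one wants to show that the ``typical'' cycle on $X_{H'}$ is a family of cycles on $X_H$ translated by divisors $D''$, which is plausible precisely when $H$ is large enough that any geometric constraint visible on $X_{H'}$ can already be imposed by something on $X_H$.

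For part (1), I would try to leverage part (2) in reverse: if one can show surjectivity of $\iota_*$ onto $\Psef_i(X_{H'})$ for all sufficiently ample $H'$ starting from some $H_0$, then finite generation on $X_{H_0}$ propagates upward. To prove finite generation on some $X_{H_0}$, I would induct on $i$. The base case $i<\dim X$ is Theorem \ref{lowCodimension}, which explicitly exhibits finitely many generators. For $i\geq\dim X$, I would look for generators of the form $\pi_1^*(\alpha)\cdot\xi^j$ together with ``incidence-type'' cycles built from effective cycles on $X$ and effective cycles on $Y_H$ tied together by subvariety containment (in the spirit of the cycles $Z_{3,j}$, $Z_{4,j}$ from Theorem \ref{conicCone}). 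Candidate irreducible extremal classes in $\Psef_i(X_H)$ should correspond to flag data: a chain of effective cycles $\alpha\subset X$ together with incidence conditions on the universal divisor.

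The main obstacle, and the reason this is phrased as a conjecture rather than a theorem, is that there is no known combinatorial model organizing the extremal rays of $\Psef_i$ for intermediate $i$. Even classifying irreducible cycles that could conceivably be extremal on a single $X_H$ is hard; the surjectivity in (2) requires controlling \emph{all} irreducible $i$-cycles on $X_{H'}$, which currently seems out of reach without a concrete family of candidate generators supplied by independent means. I would expect progress first on part (2) under the additional hypothesis that (1) holds, and on (1) first in examples where $\Psef_i(X)$ is small and $i$ is only slightly larger than $\dim X$, extending the methods used in the proof of Theorem \ref{conicCone}.
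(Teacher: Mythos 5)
This statement is a conjecture: the paper offers no proof of it, only partial verifications (Theorem \ref{lowCodimension} for $i<\dim X$, Theorem \ref{stableCases} for $X=\PP^2$ and $i\le 7$ or $i=10$), so there is no ``paper's own proof'' to match your attempt against, and your closing paragraph correctly concedes that your sketch does not close the gap either. That said, two points about your strategy are worth flagging. First, the injectivity of $\iota_*:N_i(X_H)\ra N_i(X_{H'})$ needs no pencil or rational-section argument: the projective bundle formula identifies $N_i(X_H)$ with $\bigoplus_{j\le\min(i,n)}N_j(X)$ once $i\le h^0(X,H)-2$, and the paper's stable-cone formalism ($p_H$ and $N_i(X_{stab})$) already packages this. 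Second, and more seriously, your route to the hard containment $\Psef_i(X_{H'})\subseteq\iota_*\Psef_i(X_H)$ --- realizing every irreducible $i$-cycle $W\subset X_{H'}$ as a pushforward $\mu_*([Z]\times[\Lambda])$ up to a multiple --- is not how the paper obtains any of its upper bounds, and taken literally it fails at the first step: Proposition \ref{slightlyHigherDim} shows $\Psef_{2n}(X_{n,2})$ strictly contains $\mu_*\Psef_{2n}(X_{n,1}\times Y_{n,1})$ (the classes $e_j$ for $j\le n-2$ are effective on $X_{n,2}$ but not pushed forward from that product), so surjectivity of a single pushforward is false without the ``sufficiently ample'' hypothesis doing essential, unquantified work. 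The paper's actual mechanism for upper bounds is dual: it produces nef classes, or classes nef away from a controlled locus (Lemmas \ref{367}, \ref{443}, \ref{almostNef}, Proposition \ref{sortaBPF}), and intersects the resulting dual cones with the cone swept out by pushforwards. Your proposal omits the nef side entirely, and without it there is no way to certify that the candidate generators you construct span the whole cone --- which is precisely the obstruction that keeps the statement a conjecture.
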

Theorem \ref{lowCodimension} immediately establishes Conjecture \ref{stabilizationV2} for $i<n$. We can also verify Conjecture \ref{stabilizationV2} in low degree on $\PP^2$.
\begin{theorem}\label{stableCases}
If $X= \PP^2$, then Conjecture \ref{stabilizationV2} holds for $i\leq 7$ and for $i=10$. 
\end{theorem}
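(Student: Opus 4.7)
The plan is to handle the allowed values of $i$ separately, using Theorems \ref{lowCodimension} and \ref{conicCone} as starting points together with iterated pushforward along the closed embeddings $\iota_d : X_{2,d} \hookrightarrow X_{2,d+1}$, $(p,C) \mapsto (p, C + \ell_0)$, for a fixed line $\ell_0 \subset \PP^2$. For $i = 0,1$, Theorem \ref{lowCodimension} applies directly: the hypothesis holds for every $d \geq 1$ since each $\Psef_j(\PP^2)$ is one-dimensional, so both finite generation and the stabilization isomorphism follow from the $d$-independent formula for $\Nef^i(X_{2,d})$. For $2 \leq i \leq 5$ we take $\Psef_i(X_{2,2})$ from Theorem \ref{conicCone} as the base; for $i = 6$ we use that $\dim X_{2,2} = 6$, making $\Psef_6(X_{2,2})$ the single ray $\RR_{\geq 0}\cdot[X_{2,2}]$; and for $i = 10$ we analogously use $\dim X_{2,3} = 10$. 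The isolated case $i = 7$ has no base from the earlier theorems, since $X_{2,2}$ is too small to contain a $7$-cycle while Theorem \ref{lowCodimension} only reaches $i<2$; here I would compute $\Psef_7(X_{2,3})$ directly by determining the dual cone $\Nef^3(X_{2,3})$ using the projective bundle structure of $\pi_1 : X_{2,3} \to \PP^2$ and the ring relations in $N^\bullet(X_{2,3}) = \RR[H,\xi]/(H^3, P(H,\xi))$.

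In each case the task is then to show that $\iota_{d,*}:\Psef_i(X_{2,d}) \to \Psef_i(X_{2,d+1})$ is an isomorphism for every $d$ at least the threshold set above. Injectivity reduces to verifying that the pushforwards of the generators remain linearly independent, which I would check via intersection numbers against nef classes of the form $\xi^j \pi_1^* H^{i-j}$ provided by Theorem \ref{lowCodimension}. Surjectivity is the substantive step: for an arbitrary effective $i$-cycle $Z \subset X_{2,d+1}$, analyze its image $\pi_2(Z)$ in $Y_{H_{d+1}}$. If $\pi_2(Z)$ is contained in the linear subspace $Y_{H_d} \subset Y_{H_{d+1}}$ of divisors containing $\ell_0$, then $Z$ is supported in the image of $\iota_d$ and comes from $X_{2,d}$. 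Otherwise, stratify $Y_{H_{d+1}}$ by reducibility type: cycles supported over a product stratum $Y_{H_{d_1}}\times Y_{H_{d_2}}$ with $d_1 + d_2 = d+1$ can be handled via the product map $X_{H_{d_1}}\times Y_{H_{d_2}} \to X_{H_{d_1}+H_{d_2}}$ together with induction on $d$, while cycles dominating the locus of irreducible divisors should be controlled by a generic-fiber argument combined with the ring relation $P(H,\xi)=0$, forcing $[Z]$ into the span of the already constructed pushforward generators.

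The main obstacle is this final surjectivity, particularly bounding cycles that dominate the locus of irreducible degree-$(d+1)$ divisors under $\pi_2$: the only structural input is that the generic fiber is a smooth curve of growing genus, and translating this into a concrete containment of $[Z]$ inside the cone spanned by the explicit classes will require a careful deformation-theoretic argument together with the explicit ring structure. The cases $i = 7$ and $i = 10$ are additionally delicate because the relevant pushed-forward classes live in increasing codimension on $X_{2,d}$ as $d$ grows, so preservation of extremality under $\iota_{d,*}$ must be argued with more care than in the lower-codimension range controlled directly by Theorem \ref{conicCone}.
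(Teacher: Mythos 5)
Your plan has the right outer shell (stabilize the cones under pushforward, then bound an arbitrary effective cycle), but two of its load-bearing steps do not work as described.

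First, the pushforward you propose is too small to generate the cones. The map $\iota_d:(p,C)\mapsto(p,C+\ell_0)$ is the restriction of the product map $\mu$ to $X_{2,d}\times\{\ell_0\}$, and $\iota_{d,*}\Psef_i(X_{2,d})$ misses the generators of $\Psef_i(X_{2,d+1})$ that arise from $\mu_*\bigl(\Psef_j(X_{2,d})\times\Psef_{i-j}(Y_{2,1})\bigr)$ with $j<i$. Concretely, for $i=6$ your proposed base $\Psef_6(X_{2,2})$ is the single ray spanned by $[X_{2,2}]=(1,2,0)$, whereas $\Psef_6(X_{2,3})=\langle(1,2,0),(0,1,0),(0,0,1)\rangle$ (Theorem \ref{stableCasesPrecise}); the classes $(0,1,0)$ and $(0,0,1)$ are not pushforwards of anything on $X_{2,2}$ under your $\iota$, so your chain of ``isomorphisms'' fails at its first link, and the same happens for $i=10$ starting from the fundamental ray of $X_{2,3}$. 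Stabilization genuinely begins only at $d=3$ (resp.\ $d=4$ for $i\in\{7,10\}$), and the correct source is the full product $X_{2,e}\times Y_{2,d-e}$ with the product formula (Proposition \ref{productFormula}), not a fixed complementary divisor. Relatedly, for $i=7$ you propose to compute $\Psef_7(X_{2,3})$ outright; the paper only obtains bounds on this cone, and nothing in the projective-bundle ring structure determines it.

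Second, and more seriously, your upper bound has no mechanism. The relation $P(H,\xi)=0$ holds identically in $N^\bullet(X_{2,d})$ and constrains effective classes no more than any other class, and ``a careful deformation-theoretic argument'' is the entire content of the theorem, not a step in its proof. What is actually needed is a supply of nef, or controlled almost-nef, cocycles dual to the claimed cone: the paper constructs $\eta_{e,c}=eH\xi^{c-1}-(c-1)H^2\xi^{c-2}$ from the incidence family of pairs $(p,D)$ with $D\cap C_0\supseteq \Gamma+p$ for a fixed degree-$e$ curve $C_0$ and $(c-1)$ points $\Gamma\subset C_0$, shows it is strongly basepoint free for $c<\binom{e+2}{2}$ and nonnegative on cycles not supported on $\mu(X_{2,e}\times Y_{2,d-e})$ when $c=\binom{e+2}{2}$ (Lemma \ref{almostNef}, via Proposition \ref{2.4}), and then handles the residual locus by pushing forward known cones with Proposition \ref{productFormula}. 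Dualizing against these classes together with $\xi^i$ and $H^2\xi^{i-2}$ is what pins down $\delta(i)$. Without some replacement for this construction your surjectivity step, which you yourself flag as the main obstacle, is a restatement of the problem rather than a proof. The $i\leq 1$ reduction to Theorem \ref{lowCodimension} is fine.
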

Indeed, in the cases $i\leq 6$ and $d\geq 3$ or $i\in\{7,10\}$ and $d \geq 4$ we describe the cones $\Psef_i(X_{2,d})$ precisely in Theorem \ref{stableCasesPrecise}.

Outside of these cases, we cannot verify Conjecture \ref{stabilizationV2}, but Theorem \ref{628} gives upper and lower bounds on the cones  $\Psef_i(X_{2,d})$ for any $i$ with $d$ sufficiently large (depending on $i$).

This paper is organized as follows. 
In Section \ref{preliminaries} we set up notation and collect the properties of positivity and projective bundles we will need. 
In Section \ref{generalities} we prove general facts about the effective cones of cycles on universal hypersurfaces, including Theorem \ref{lowCodimension}. 
In Section \ref{projSpace} we restrict our attention to universal hypersurfaces in $\PP^n$ and prove Theorems \ref{conicCone} and \ref{stableCasesPrecise}, the latter of which immediately implies Theorem \ref{stableCases}. 
Section \ref{stableConeBounds} describes the best bounds we have on the cones $\Eff_i(X_{2,d})$ for $i\geq 8$, explicitly given as Theorem \ref{628}.
Finally, Section \ref{formulary} contains some calculations of the direct images of pseudoeffective cones under the multiplication map, still working in universal hypersurfaces on $\PP^n$.
\subsection*{Acknowledgements}
We would like to thank Izzet Coskun for many helpful conversations.
\section{Preliminaries}\label{preliminaries}
\subsection{Universal hypersurfaces}\label{universalHypersurfaces}
We work with a projective variety $X$ of dimension $n$ over an algebraically closed field of arbitrary characteristic.  We denote $\RR$-vector space of numerical $i$-cycles by $N_i(X)$, and the abstract dual of $N_i(X)$ by $N^i(X)$. 
Given an $i$-dimensional subvariety $Z\subseteq X$, we use $[Z]\in N_i(X)$ to denote the numerical cycle associated to $Z$.
If $X$ is smooth, Poincare duality gives a canonical isomorphism $N_i(X)\xrightarrow{\sim} N^{n-i}(X)$. This notation is essentially the same as in \cite{FL17}

Given a rank $r$ vector bundle $E$ on $X$, we define the projective bundle $\pi:\PP(E)\ra X$ as the space of rank 1 \emph{quotients} of $E$, following the convention of \cite{Laz04}. We then have the tautological sequence on $\PP(E)$
\[
0\ra S\ra \pi^*(E)\ra \OO_{\PP(E)}(1)\ra 0,
\]
and $\xi_E=c_1(\OO_{\PP(E)}(1))\in N^1(\PP(E))$ satisfies the projective bundle formula
\[
\xi^r-c_1(E)\xi^{r-1}+c_2(E)\xi^{r-2}-\cdots + (-1)^r c_r(E)=0.
\]
Let $H$ be an effective Cartier divisor class on $X$. Given a basepoint free linear series $V\subseteq H^0(X,H)$, we define define a vector bundle $E$ by the sequence
\[
0\ra \OO(-H)\ra V^*\otimes \OO\ra E\ra 0.
\]
Then $X_{V}=\PP(E)$ is the universal linear series associated to $V$. 
Throughout this article, we work in the special case $V=H^0(X,\OO(H))$, in which case we write $X_H=\PP(E)$ and say $X_H$ is the \emph{universal hypersurface} of class $H$.
We denote the class $\xi_E$ by $\xi$---or, where there might be confusion otherwise, $\xi_H$.
We also set $Y_H=\PP(H^0(X,H)^*)$, so $X_H$ admits a projection $\pi_2:X_H\ra Y_H$ in addition to the projection $\pi_1:X_H\ra X$. In a modular sense, $Y_H$ is the parameter space of hypersurfaces $D$ in $X$ of class $H$, and $X_H\subset X\times Y_H$ consists of pairs $(p,D)$ such that $p\in D$.
Let $\pi_1:X_H\ra X$ and $\pi_2:X_H\ra Y_H$ denote the projections.

If $\dim(Y_H)=r$, we have the projective bundle formula
\[
N^*(X_H)\cong \frac{\pi_1^*(N^*(X))[\xi]}{(\xi^{r}-H\xi^{r-1}+H^2\xi^{r-2}-\cdots +(-1)^r H^{r})}.
\]
In particular, every cocycle on $X_H$ is a linear combination of products $\xi^i \eta$, where $\eta$ is the pullback to $X_H$ of some cocycle on $X$.
Given a cocycle $\eta\in N^i(X)$, we will also use $\eta$ to denote the pullback $\pi_1^*(\eta)\in N^i(X_H)$.

 If $H_1$ and $H_2$ are effective Cartier divisor classes on $X$ such that $H_1$ and $H_1+H_2$ are basepoint free, there is a product map 
\[
\mu_{H_1,H_2}:X_{H_1}\times Y_{H_2}\ra X_{H_1+H_2}
\]
that sends $(p, D_1), D_2$ to $(p,D_1+D_2,)$.  Where it is unlikely to cause confusion, we use $\mu$ to denote any product map $\mu_{H_1,H_2}$.
\subsection{Covariant cycle notation and the product formula}
Given a numerical cycle $\eta\in N_i(X_H)$, we write its numerical class in a way that behaves well under pushforwards.  We write 
\[
\eta=(\eta_n,\ldots,\eta_0)=\sum_{0\leq j\leq n}\eta_j e_j,
\] where $\eta_j\in N_j(X)$ is the cycle corresponding to the map $ N^j(X)\ra \RR$  given by
\[
\alpha \mapsto\twopartdef{\pi_1^*(\alpha)\cdot \eta \cdot \xi^{i-j}}{i\geq j}{0}{i<j}. 
\]
So, for instance, we write $\xi=([X], H,0\ldots, 0)=[X]e_n+He_{n-1}$. We note that if $\eta=(\eta_n,\ldots, \eta_0)\in N_i(X_H)$ and $i\geq n+1$, then $\eta \xi\in N_{i-1}(X_H)$ also is given by $\eta\xi=(\eta_n,\ldots,\eta_0)$.

This notation is useful for writing down pushforwards of cycles under the product map $\mu:X_{H_1}\times Y_{H_2}\ra X_{H_1+H_2}$.
\begin{proposition}[The product formula]\label{productFormula}
Let $H_1$ and $H_2$ be effective Cartier divisor classes on $X$ such that $H_1$ and $H_1+H_2$ are basepoint free, so there exists a product map $\mu:X_{H_1}\times Y_{H_2}\ra X_{H_1+H_2}$.
 Let $\eta\in N_s(X_{H_1})$ be given by $\eta=(\eta_n,\ldots,\eta_0)$. 
 For any numerical $t$-cycle $\delta $ of degree $d$ on $Y_{H_2}$, we have that the $s+t$ cycle $\mu_*(\eta\times \delta)\in N_{s+t}(X_{H_1+H_2})$ satisfies the formula
\[
\mu_*(\eta \times \delta)=d\left( \binom{s+t-n}{t}\eta_n,\binom{s+t-n+1}{t}\eta_{n-1},\ldots, \binom{s+t}{t} \eta_0\right).
\]
\end{proposition}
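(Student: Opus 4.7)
The plan is to pin down the covariant coordinates of $\mu_*(\eta\times\delta)$ by evaluating, for every $0\le j\le n$ and every $\alpha\in N^j(X)$, the pairing
\[
\pi_1^*(\alpha)\cdot \mu_*(\eta\times\delta)\cdot \xi_{H_1+H_2}^{s+t-j},
\]
and checking that it equals $d\binom{s+t-j}{t}\,\alpha\cdot \eta_j$, which is exactly the content of the proposition once the covariant notation is unpacked.

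The key ingredient is the pullback of these two classes along $\mu$. The first is immediate: $\pi_1\circ\mu$ equals the composition $X_{H_1}\times Y_{H_2}\to X_{H_1}\to X$, so $\mu^*\pi_1^*(\alpha)$ is just $\pi_1^*(\alpha)$ pulled back to the product. For $\xi$, I would use that $\pi_2\circ\mu\colon X_{H_1}\times Y_{H_2}\to Y_{H_1+H_2}$ factors through $Y_{H_1}\times Y_{H_2}$ via the polynomial multiplication map $m\colon (D_1,D_2)\mapsto D_1+D_2$. Since $m$ is bilinear---essentially a Segre embedding composed with a linear projection---it pulls the hyperplane from $Y_{H_1+H_2}$ back to the sum of the hyperplanes on the two factors. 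Combined with the identity $\xi_H=\pi_2^*h$ from the projective bundle description, this yields
\[
\mu^*(\xi_{H_1+H_2})=\xi_{H_1}+h,
\]
where $h$ denotes the hyperplane class on $Y_{H_2}$ pulled back to the product.

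Given these pullbacks, the projection formula converts the pairing into
\[
\mu_*\bigl((\eta\times\delta)\cdot\pi_1^*(\alpha)\cdot(\xi_{H_1}+h)^{s+t-j}\bigr),
\]
and binomial expansion produces, for each $k$, an external product $\binom{s+t-j}{k}\bigl(\eta\cdot\pi_1^*(\alpha)\cdot\xi_{H_1}^{s+t-j-k}\bigr)\times\bigl(\delta\cdot h^k\bigr)$. The two factors have dimensions $k-t$ and $t-k$, and both must be nonnegative for the product to contribute a nonzero $0$-cycle, which forces $k=t$. The surviving term evaluates, via the definition of $\eta_j$ and the relation $h^t\cdot\delta=d$, to $d\binom{s+t-j}{t}\,\alpha\cdot\eta_j$, as required. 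The only step requiring genuine geometric input is the Segre-type pullback identity $\mu^*(\xi_{H_1+H_2})=\xi_{H_1}+h$; the rest is a routine application of the projection formula together with a dimension count on the product.
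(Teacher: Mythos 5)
Your argument is correct and is essentially the paper's own proof: both compute the covariant coordinates by pairing $\mu_*(\eta\times\delta)$ against $\pi_1^*(\alpha)\cdot\xi_{H_1+H_2}^{s+t-j}$, apply the projection formula with the key pullback identity $\mu^*(\xi_{H_1+H_2})=\xi_{H_1}+\xi_{H_2}$, and isolate the single surviving binomial term by the same dimension count. Your justification of the pullback identity via the Segre-type bilinearity of the multiplication map is slightly more explicit than the paper's, but the route is the same.
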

\begin{proof}
This is an application of the push-pull formula. Set $H=H_1+H_2$, fix $\zeta\in N^i(X)$ with $i\leq s$ and let $\zeta'=\zeta\,\xi_H^{s+t-i}\in N^{s+t}(X_H)$. Then we have
\[
\mu_*(\eta\times \delta)\cdot \zeta'= \eta \times \delta \cdot \mu^*(\zeta').
\]
We have $\mu^*(\xi_H)= \xi_{H_1}+\xi_{H_2}$, where $\xi_{H_2}$ is the pullback of the hyperplane class on $Y_{H_2}$.
Likewise, we have  $\mu^*(\zeta)=\zeta$. Then we have
\[
\mu_*(\eta\times \delta)\cdot \zeta'= (\eta \times \delta)\cdot \zeta (\xi_{H_1}+\xi_{H_2})^{s+t-i}.
\]
The only term of $(\xi_{H_1}+\xi_{H_2})^{s+t-i}$ with nonzero pairing with $(\eta\times \delta)$ is the monomial $\binom{s+t-i}{t}\xi_{H_1}^{s-i}\xi_{H_2}^t$, and we have
\[
(\eta\times \delta)\cdot \binom{s+t-i}{t}\xi_{H_1}^{s-i}\xi_{H_2}^t=\binom{s+t-i}{t}d \eta_i\cdot \zeta. 
\]
\end{proof}

We can also pull back along morphisms $f:X_1\ra X_2$.  Given a basepoint free Cartier divisor class $H$ on $X_2$,  $f^*(H)$ is likewise basepoint free.  We have a pullback morphism $f^*:X_{2,H}\ra X_{1,f^*(H)}$ given by sending the class $\alpha \xi_H^t$  (with $\alpha\in N^s(X_2)$) to $f^*(\alpha) \xi_{f^*(H)}^t\in N^{s+t}(X_1)$.
\subsection{Positivity for cycles}\label{fl}
Let $X$ be a smooth projective variety of dimension $N$. The two most evident positive cones in the vector spaces $N_i(X)$and their duals $N^i(X)$ are:
\begin{itemize}
    \item The \emph{pseudoeffective cone} $\Psef_i(X)\subset N_i(X)\cong N^{n-i}(X)$, given as the closure of the cone spanned by the numerical classes of $i$-dimensional subvarieties of $X$.
    \item The \emph{nef cone} $\Nef^i(X)\subset N^i(X)\cong N_{n-i}(X)$, defined as the dual cone to $\Psef_i(X)$.
\end{itemize}
The nef cone and pseudoeffective cone of divisors on $X$ are well-understood and satisfy a number of useful properties. One such basic property is the following.
\begin{proposition}\label{nefDivisorProducts}
Suppose $D\in \Nef^1(X)$. 
If $\eta\in \Psef_i(X)$, then $D\cdot \eta\in \Psef_{i-1}(X)$, and if $\eta\in \Nef^i(X)$, then $D\cdot \eta\in \Nef^{i+1}(X)$.
\end{proposition}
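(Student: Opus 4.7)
The plan is to establish the pseudoeffective statement first, then derive the nef statement by formal duality.

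For the pseudoeffective part, my approach is to reduce to an elementary geometric case by continuity and bilinearity. Since $\Psef_{i-1}(X)$ is closed and the operation $\eta\mapsto D\cdot \eta$ is continuous and linear, and since $\Psef_i(X)$ is the closed convex cone generated by classes $[Z]$ of irreducible $i$-dimensional subvarieties of $X$, it suffices to treat $\eta=[Z]$. Next, writing $D=\lim_{\epsilon\to 0^+}(D+\epsilon A)$ for a fixed ample class $A$, each $D+\epsilon A$ is ample; by the closedness of $\Psef_{i-1}(X)$ I may therefore assume $D$ itself is ample. Choosing $m\geq 1$ so that $mD$ is very ample, the fact that $|mD|$ is base point free lets me pick a representative $D'\in |mD|$ that does not contain $Z$. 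The scheme-theoretic intersection $D'\cap Z$ is then proper and represents the class $mD\cdot [Z]$ by an effective $(i-1)$-cycle, so $mD\cdot [Z]\in \Psef_{i-1}(X)$ and hence $D\cdot [Z]\in \Psef_{i-1}(X)$.

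For the nef part, suppose $\eta\in \Nef^i(X)$ and let $\gamma\in \Psef_{i+1}(X)$ be arbitrary. The first part applied to $D$ and $\gamma$ gives $D\cdot \gamma\in \Psef_i(X)$, so by the definition of the nef cone $\eta\cdot (D\cdot \gamma)\geq 0$. Associativity of the intersection pairing rewrites this as $(D\cdot \eta)\cdot \gamma\geq 0$, showing that $D\cdot \eta$ pairs nonnegatively with every element of $\Psef_{i+1}(X)$, i.e., $D\cdot \eta\in \Nef^{i+1}(X)$.

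The only genuine obstacle is the moving step in the pseudoeffective part, where base point freeness of $|mD|$ is used to find a representative not containing $Z$. This is precisely the reason the nef hypothesis on $D$ is needed; a merely pseudoeffective divisor class cannot in general be moved off an arbitrary $Z$, and the analogous statement fails in that generality.
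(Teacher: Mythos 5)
Your proof is correct, but the first half takes a different route from the paper. The paper handles $\eta=[Z]$ by restricting $D$ to $Z$: nefness is preserved under pullback to the inclusion $i:Z\to X$, and a nef divisor class on the projective variety $Z$ is pseudoeffective, so $i^*(D)\cdot[Z]$ pushes forward to an effective limit representing $D\cdot[Z]$. You instead perturb $D$ to an ample class $D+\epsilon A$, pass to a very ample multiple, and use basepoint freeness to choose a representative $D'$ not containing $Z$, so that $D'\cap Z$ is an honest effective $(i-1)$-cycle. Both arguments ultimately rest on the same fact (nef is the closure of ample, and ample classes have effective multiples); the paper applies it on $Z$ after restricting, which avoids any perturbation, while you apply it on $X$ before intersecting, which produces an explicit effective representative at the cost of the limiting step. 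One small point to tidy in your version: $D+\epsilon A$ is a real ample class, so ``choose $m$ with $mD$ very ample'' does not literally apply; since the ample cone is open, you should perturb to a \emph{rational} ample class (or write the real ample class as a positive combination of rational ones) before clearing denominators. With that adjustment, and the identical duality argument for the nef half (which matches the paper's, up to a sign typo there), your proof is complete.
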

\begin{proof}
Nefness is preserved by proper pullback \cite[Example 1.4.4]{Laz04}, so if $i:Z\ra X$ is a proper inclusion such that $i_*[Z]=\eta$, $i^*(D)$ is nef on $Z$,  hence pseudoeffective since nef divisors are pseuodeffective. So $D\cdot \eta\in \Psef_{i-1}(X)$ if $\eta$ is an integral effective $i$-divisor; since $\Psef_{i-1}(X)$ is closed and preserved by scaling by positive real numbers, we also have $D\cdot \eta\in \Psef_{i-1}(X)$ if we only assume $\eta\in \Psef_{i}(X)$. Likewise, if $\eta\in \Nef^i(X)$, then if $\beta \in \Psef_{i+1}(X)$, we have $D\cdot \beta\in \Psef_{i}(X)$ and $\beta \cdot (D\cdot \beta)>0$. So $\beta \cdot D\in \Nef^{i+1}(X)$.
\end{proof}
This property does not imply that the intersection of two nef cycles is nef.  For instance,  if $E$ is an elliptic curve with complex multiplication, then there are two nef two-cycles with negative intsersection on the product $E\times E\times E\times E$ \cite{DELV11}. To bound the effective cones of universal plane curves, we will use one of the refined notions of positivity given in \cite{FL17}.

We will also use the following basic result.
\begin{lemma}\label{finiteMaps}
If $f:Z'\ra Z$ a finite map of projective varieties, then $f_*\Psef_i(Z')=\Psef_i(Z)$.
\end{lemma}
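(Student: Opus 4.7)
The plan is to establish both inclusions separately. For the forward containment $f_*\Psef_i(Z') \subseteq \Psef_i(Z)$, the argument is routine: given an $i$-dimensional irreducible subvariety $W' \subseteq Z'$, finiteness of $f$ forces $\dim f(W') = i$, and $f_*[W'] = \deg(f|_{W'})\cdot [f(W')]$ is manifestly pseudoeffective on $Z$. Because $f_*$ is a continuous linear map between finite-dimensional vector spaces, this passes from the cone generated by effective $i$-cycles on $Z'$ to its closure.

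For the reverse containment I need to invoke that $f$ is surjective (this seems implicit---otherwise, e.g., taking $Z'$ a point mapped into $\PP^1$ with $i=1$ would violate the equality). Granting surjectivity, the strategy is to realize every effective generator of $\Psef_i(Z)$ as a positive multiple of $f_*$ of something effective. Given an $i$-dimensional subvariety $W \subseteq Z$, I take $W' = f^{-1}(W)_{\mathrm{red}} = \bigcup_j W'_j$. Since $f$ is finite and surjective, each $W'_j$ is $i$-dimensional and dominates $W$ with some generic degree $d_j \geq 1$; a computation at the generic point of $W$ gives $f_*\sum_j [W'_j] = \left(\sum_j d_j\right) [W]$, so $[W] \in f_*\Psef_i(Z')$.

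The remaining issue---and the one I expect to be the main obstacle---is to upgrade from the inclusion of effective generators to an inclusion of \emph{closed} pseudoeffective cones, since in general the image of a closed convex cone under a linear map need not be closed. I would handle this by a compactness argument: given a sequence $\alpha_n = f_*\beta_n$ in $f_*\Psef_i(Z')$ converging to $\alpha \in \Psef_i(Z)$, choose $\beta_n$ of minimal norm within the intersection of the fiber $(f_*)^{-1}(\alpha_n)$ with $\Psef_i(Z')$, show these $\beta_n$ are bounded by using that $f_*$ is surjective on $N_i$ (its image contains every effective class, and those span $N_i(Z)$), extract a convergent subsequence, and take $\beta \in \Psef_i(Z')$ as the limit so that $f_*\beta = \alpha$. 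This closure step is standard and could alternatively be cited from \cite{FL17}.
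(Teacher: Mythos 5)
Your proof is correct and its core step---pulling back an irreducible $W\subseteq Z$ to $f^{-1}(W)$ and pushing forward to recover a positive multiple of $[W]$---is exactly the paper's entire one-line argument. The two issues you flag, namely that $f$ must be surjective (the paper's statement omits this but implicitly assumes it, since the lemma is only applied to $\mu:X_{H_1}\times Y_{H_2}\to \mu(X_{H_1}\times Y_{H_2})$) and that $f_*\Psef_i(Z')$ must be shown closed before it can contain the closure $\Psef_i(Z)$ (handled by your compactness argument, or by pairing with powers of $f^*$ of an ample class as in \cite{FL17}), are genuine gaps in the paper's proof that your additional steps correctly repair rather than a different route.
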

\begin{proof}
Given any irreducible subvariety $W\subseteq Z$, $f^{-1}(W)\ra W$ is a finite map of some degree $d$, so $[W]=\frac{f_*([f^{-1}(W)])}{d}$.  
\end{proof}
In this article we use Lemma \ref{finiteMaps} precisely to make the identification
\[
\Psef_i(\mu(X_{H_1}\times Y_{H_2}))=\mu_*(\Psef_i(X_{H_1}\times Y_{H_2})).
\]

\begin{definition}\label{bpfdef}
A cycle $\eta\in N_i(X)$ is \emph{strongly basepoint free} if there are quasiprojective varieties $U,W$ and morphisms $p:U\ra W$ and $s:U\ra X$ such that $p$ is projective and surjective with fibers of dimension $i$, $s$ is flat, and $\eta$ is the class of $s_*([p^{-1}(w)])$, where $w$ is a general point of $W$. The \emph{basepoint free cone} $\BPF_i(X)\subset N_i(X)$ is the closure of the cone spanned by the classes of strongly basepoint free cycles.
\end{definition}
Fulger and Lehmann \cite{FL17} prove that $\BPF_i(X)$ is a subcone of $\Psef_i(X)\cap \Nef_i(X)$.  We will occasionally show a cycle is nef by showing it lies in $\BPF_i(X)$. More typically, we will mostly work with cycles that cannot quite be described as strongly basepoint free, but look basepoint free away from some controllable locus.
These too will satisfy some positivity properties.

\begin{proposition}\label{2.4}
Let $\eta\in N_i(X)$ be a numerical class, let $U$ be a variety, let $p:U\ra W$ be a proper morphism of relative dimension $i$ to a quasi-projective variety $W$ such that each component of $U$ surjects onto $W$, and let $s:U\ra X$ be a morphism. Let $B$ be a variety of codimension $c$ such that $s^{-1}(B)$ has codimension $c$ in $U$. Then $B\cdot \eta $ is effective.
\end{proposition}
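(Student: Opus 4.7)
The statement, in the spirit of Definition \ref{bpfdef}, should be read as asserting that $\eta$ is the class of $s_*[p^{-1}(w)]$ for a general $w \in W$, and that even without the flatness in the definition of $\BPF_i$, a codimension condition on $s^{-1}(B)$ alone suffices to make $B \cdot \eta$ effective. My plan is to compute $B\cdot \eta$ using the representative $s_*[p^{-1}(w)]$ for general $w$ and show that the answer is visibly effective.

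First I would case-split on whether $V := s^{-1}(B) \subset U$ dominates $W$. Since $p|_V \colon V \ra W$ is proper (being a restriction of the proper map $p$), the image $p(V)$ is closed. If $V$ does not dominate $W$, choose $w$ in the open dense complement $W \setminus p(V)$: then $p^{-1}(w) \subset U \setminus V$, so $s(p^{-1}(w))$ is a closed subset of $X$ disjoint from $B$. Consequently $B\cdot \eta = 0 \in N_{i-c}(X)$, trivially effective.

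The substantive case is when $V$ dominates $W$. Then $V$ has pure dimension $\dim W + i - c$ (after discarding components of $U$ whose restriction of $V$ fails to dominate, which contribute nothing for a generic $w$), and by upper semicontinuity of fiber dimension for the proper map $p|_V$, the scheme-theoretic intersection $V \cap p^{-1}(w)$ is pure of dimension $i-c$ for general $w \in W$. In particular $V \cap p^{-1}(w)$, with its natural scheme structure, is an honest effective $(i-c)$-cycle in $U$. Because $p^{-1}(w)$ is projective (as $p$ is proper), the restriction $s|_{p^{-1}(w)}$ is a proper morphism, so $s_*[V \cap p^{-1}(w)]$ is an effective cycle of dimension at most $i-c$ in $X$.

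The main and only real obstacle is matching this effective cycle with $B\cdot \eta$ inside $N_{i-c}(X)$. Since $X$ is smooth, $[B]$ has a well-defined operational pullback $s^*[B] \in A^c(U)$, and the projection formula for the proper map $s|_{p^{-1}(w)}$ gives
\[
B \cdot s_*[p^{-1}(w)] \;=\; s_*\bigl(s^*[B] \cap [p^{-1}(w)]\bigr).
\]
Because the intersection of $V$ with $p^{-1}(w)$ in $U$ is dimensionally proper, the cap product $s^*[B] \cap [p^{-1}(w)]$ is represented by the fundamental cycle of $V \cap p^{-1}(w)$ with its natural scheme-theoretic multiplicities (a standard consequence of refined intersection theory, e.g.\ in the style of Fulton's Chapters 6 and 8). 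Thus $B\cdot \eta = s_*[V \cap p^{-1}(w)]$, which is effective, completing the argument.
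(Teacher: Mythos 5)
Your argument is correct and is essentially the paper's own proof: both take a general $w\in W$, use the codimension hypothesis to see that $s^{-1}(B)\cap p^{-1}(w)$ has dimension $i-c$ (or is empty), and conclude by positivity of dimensionally proper intersections over a smooth variety (Fulton, Proposition 7.1(a)); the only difference is that the paper intersects the effective cycle $s_*[p^{-1}(w)]$ with $B$ directly on $X$, whereas you pull $[B]$ back to $U$ and push forward via the projection formula. One minor imprecision: the refined intersection class is in general \emph{not} the fundamental cycle of the scheme-theoretic intersection $V\cap p^{-1}(w)$ (intersection multiplicities agree with lengths only under extra hypotheses such as Cohen--Macaulayness), but all you actually need is that the coefficients are positive, which is exactly what Fulton's Proposition 7.1(a) provides.
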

\begin{proof}
Let $w\in W$ be a general point. The intersection $s^{-1}(B)\cap U_w$ has dimension $i-c$ if nonempty. Then $s_*(U_w)$ is an effective cycle of numerical class $\eta$ that intersects $B$ in dimension $i-c$, so $B\cdot \eta$ is effective by \cite[Proposition 7.1.a]{Ful84}.
\end{proof}

\section{Universal hypersurfaces in general}\label{generalities}
We continue to use the notation of Section \ref{preliminaries}. Let $X$ be a smooth projective variety of dimension $n$, let $H$ be a basepoint free divisor class on $X$, and set $\xi=\pi_2^*(\OO_{Y_H}(1))$.
Set $r=h^0(X,H)-1$, so $X_H$ has dimension $r+n-1$ and $Y_H$ has dimension $r$. 
\subsection{Positivity of cycles }
$X_H$ has several nef classes coming from its projections to $Y_H$ and $X$.
\begin{lemma}\label{easiestNefClasses}
If $\eta\in \Nef^i(X)$, then for all $j\geq 0$ the cycle $\eta \xi^j \in N^{i+j}(X_H) $ is nef.
\end{lemma}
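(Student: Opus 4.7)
The plan is to reduce the lemma to two facts: that $\xi$ itself is nef, and that nefness of cocycles on $X$ is preserved by pullback along $\pi_1$. Granting these, the result follows by induction on $j$ using the already-established Proposition \ref{nefDivisorProducts}, namely that the intersection of a nef divisor with a nef cocycle is nef: starting from $\pi_1^*(\eta) \in \Nef^i(X_H)$ (the $j=0$ case), each additional factor of the nef divisor $\xi$ lands us back in $\Nef^{i+j}(X_H)$.

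To check that $\xi$ is nef, I would use the defining sequence of $E$. Since $H$ is basepoint free by hypothesis, the evaluation map $V^* \otimes \OO_X \twoheadrightarrow E$ exhibits $E$ as a globally generated vector bundle (where $V = H^0(X,H)$). Pulling this surjection back to $X_H = \PP(E)$ and composing with the tautological quotient $\pi_1^*(E) \twoheadrightarrow \OO_{\PP(E)}(1)$ shows that $\OO_{\PP(E)}(1)$ is itself globally generated. In particular $\xi = c_1(\OO_{\PP(E)}(1))$ is basepoint free, hence nef.

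To check that $\pi_1^*(\eta) \in \Nef^i(X_H)$, let $C \subset X_H$ be an irreducible subvariety of dimension $i$ (noting $i < \dim(X) \leq \dim X_H$, and by linearity it suffices to pair against such test classes). By the projection formula,
\[
\pi_1^*(\eta) \cdot [C] = \eta \cdot \pi_{1*}[C].
\]
Now $\pi_{1*}[C]$ is either zero (if $\pi_1|_C$ is not generically finite) or a positive multiple of the class of the irreducible subvariety $\pi_1(C) \subset X$ of dimension $i$. In either case $\pi_{1*}[C] \in \Psef_i(X)$, so the pairing with the nef class $\eta$ is nonnegative.

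I do not anticipate a real obstacle; the only place requiring slight care is confirming that $\xi$ is nef, and this is immediate from the global generation of $E$ that comes for free from basepoint-freeness of $H$. Once Steps 1 and 2 are in hand, an induction on $j$ via Proposition \ref{nefDivisorProducts} finishes the proof.
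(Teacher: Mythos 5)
Your proof is correct and follows essentially the same route as the paper: pull back the nef class $\eta$ along $\pi_1$, observe $\xi$ is nef, and iterate Proposition \ref{nefDivisorProducts}. The only cosmetic difference is that the paper sees $\xi$ is nef because it is $c_1(\pi_2^*\OO_{Y_H}(1))$, the pullback of an ample class, whereas you deduce it from global generation of $\OO_{\PP(E)}(1)$; these amount to the same thing.
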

\begin{proof}
Since $\eta\in N^i(X)$ is nef, its pullback to $X_H$ is also nef. Similarly, $\xi=c_1(\pi^*_2(\OO_{Y_H}(1)))$  is a nef divisor, so by Proposition \ref{nefDivisorProducts} $\eta\xi^j$ is nef as well.
\end{proof}
 When it applies, Theorem \ref{lowCodimension} asserts that if $i+j<n$, these are the \emph{only} nef cycles. We prove it now.
\begin{proof}[Proof of Theorem \ref{lowCodimension}]
Since $\xi$ is a nef divisor it follows from Lemma \ref{nefDivisorProducts} that 
\[
\Nef^i(X_H)\supseteq \sum_{0\leq j \leq i} \xi^{i-j}(\Nef^j(X)).
\]
 We prove the opposite inclusion dually by showing
\begin{equation*}
\Psef_i(X_H)\supseteq \sum_{0\leq j\leq i}  \Psef_j(X)e_j.
\end{equation*}
By assumption, $\Psef_j(X)$ is the closure of a cone spanned a collection of classes $\{\beta_\alpha\vert \alpha \in A\}$ such that each $\beta_\alpha$ is represented by some effective $j$-cycle $Z_\alpha$ supported on a divisor of class $H$. Fix a divisor $D_\alpha$ of class $H$ containing $Z_\alpha$, and let $Z'_\alpha\subset X_H$ consist of pairs $(p,D_\alpha)$ with $p\in Z_\alpha$. Then $Z'_\alpha$ is an effective cycle of class $\beta_\alpha e_j$ on $X_H$. So, since $\Psef_i(X_H)$ is itself closed, we have $\Psef_j(X)e_j\subseteq \Psef_i(X_H)$, and 
\[\Psef_i(X_H)\supseteq \sum_{0\leq j\leq i} \Psef_j(X)e_j.
\]
Dualizing this containment gives $\Nef^i(X_H)= \sum_{0\leq j \leq i} \xi^{i-j}(\Nef^j(X))$, as we were to show.
\end{proof}

\begin{remark}
 For Theorem \ref{lowCodimension} to apply, the hypotheses that $\Psef_i(X)$ be spanned by effective cycles supported on divisors of class $H$ is necessary.
For instance, if $X=\PP^1\times \PP^1$ and $F_1$, $F_2$ are the class of a fiber of the first and second projections respectively then $X_{2F_1}$ is a trivial $\PP^1$ bundle over $X$ and we have
\[
\Nef^1(X_H)=\langle \xi-F_1, F_1,F_2\rangle.
\]
%\item Let $C$ be a genus 1 curve, and let $X=C\times C$. Let $H_1,H_2,\Delta\in N^1(X)$ be the numerical class of a fiber of the first projection, a fiber of the second projection, and the diagonal, respectively, and let $H=H_1+H_2$. The pseudoeffective cone of curves on $X$ is given by 
%\[\Psef_1(X)=\{ \alpha\in N^1(X)\vert \alpha^2\geq 0, \alpha \cdot H\geq 0\}. \]

%As a result, $\Psef_1(X_{\ell H})$ never includes all the cycles $(0, \alpha,0)$ for all $\alpha\in \Psef_1(X)$, because it only includes those cycles $\alpha$ also satisfying $H\cdot (\ell H-\alpha)\geq 0$. Indeed, applying \cite{Ful11},  for any basepoint free divisor $H'$ we have
%\begin{align*}
%\Psef_1(X_{H'})&=\langle (0,\alpha,0)\vert \alpha,H'-\alpha\in \Eff_1(X)\rangle \\
%&+ \langle(0,\alpha, \alpha\cdot H')\vert \alpha\in \Psef_1(X))\\
%&+\langle (0,0,[p])\rangle,
%\end{align*}
%where $p$ is a point.  
\end{remark}
Beyond the cases handled by Theorem \ref{lowCodimension} or Lemma \ref{easiestNefClasses}, some numerical classes cannot be proven nef directly, but only positive away from some subcone of the cone of effective cycles. One construction proceeds as follows.
Let $\pi:\mathcal{Z}\ra B$ be a flat family of pairs $(Z_b,Z_b')\subset X\times X$ where $Z_b$ is a $m$-cycle contained in the $(m+1)$-cycle $Z_b'$.  Set
\[
W_b=\overline{\{(p,D)\in X_H\vert\dim(Z'_b \cap D)=m,\, Z_b\subset D,\, p\in Z_b'\setminus Z_b\}},
\]
and let $\eta=[W_b]$. Suppose $W_b$ has codimension $C$ in $X_H$, so $\eta \in N^C(X_H)$.
\begin{proposition}\label{sortaBPF}
Let $R\subset X_H$ be a $C$-cycle and suppose $\eta\cdot [R]<0$. Then, for some integer $c$ with $0\leq c<C$, there exists a cycle $R'\subset R$ of dimension $C-c$ such that for a general pair $(p,D)\in R'$, the locus of $b\in B$ such that $Z_b\subset D$ and $p\in Z_b'\setminus Z_b$ has codimension at most $C-c-1$.
\end{proposition}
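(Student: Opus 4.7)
The plan is to derive this from Proposition \ref{2.4} applied to the total family of the $W_b$'s, then extract the required data by a dimension count in the contrapositive. Form the incidence correspondence
\[
\mathcal{W} = \{((p,D), b) \in X_H \times B : (p,D) \in W_b\},
\]
with its natural projections $q : \mathcal{W} \to B$ and $s : \mathcal{W} \to X_H$. After shrinking $B$ to a dense open subset over which the $W_b$ have constant dimension $\dim X_H - C$, and discarding any components of $\mathcal{W}$ not dominating $B$, the map $q$ is proper of pure relative dimension $\dim X_H - C$ with every component of $\mathcal{W}$ surjecting onto $B$; in particular $\dim \mathcal{W} = \dim B + \dim X_H - C$, and $s_*(q^{-1}(b))$ still represents $\eta$ for general $b$. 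This places $(\mathcal{W}, q, s)$ in the setting of Proposition \ref{2.4}.

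Next I would argue by contrapositive. With $R$ substituted for the subvariety ``$B$'' of Proposition \ref{2.4} (noting $\codim_{X_H} R = \dim X_H - C$), that proposition asserts that if $s^{-1}(R)$ has codimension $\dim X_H - C$ in $\mathcal{W}$, then $R \cdot \eta$ is effective, so in particular $\eta \cdot [R] \geq 0$. Since we are given $\eta \cdot [R] < 0$, this hypothesis must fail, and some irreducible component $T$ of $s^{-1}(R)$ must satisfy
\[
\dim T > \dim \mathcal{W} - (\dim X_H - C) = \dim B.
\]

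Let $R' := s(T) \subseteq R$ and define the nonnegative integer $c$ by $\dim R' = C - c$. The generic fiber of $s|_T : T \to R'$ has dimension at least $\dim T - (C - c) \geq \dim B - C + c + 1$, equivalently codimension at most $C - c - 1$ in $B$. Since each fiber is a subvariety of $B$, this dimension is bounded above by $\dim B$, forcing $c \leq C - 1$. By the construction of $\mathcal{W}$, the fiber of $s$ over a point $(p, D)$ is exactly the locus of $b \in B$ for which $Z_b \subset D$ and $p \in Z'_b \setminus Z_b$, so for general $(p,D) \in R'$ this locus has codimension at most $C - c - 1$ in $B$, as required.

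The main obstacle I foresee is the opening reduction: confirming that after passing to open subsets of $B$ and discarding non-dominating components of $\mathcal{W}$, what remains both fits into the hypotheses of Proposition \ref{2.4} and still represents $\eta$ via $s_*(q^{-1}(b))$ for general $b$. Once that bookkeeping is in place, the dimension count giving $R'$ and $c$ is essentially formal.
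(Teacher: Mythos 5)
Your proof is correct and follows essentially the same route as the paper's: both arguments come down to exhibiting a component of the incidence locus $\{(b,r)\in B\times R : r\in W_b\}$ of dimension greater than $\dim B$, and then projecting it to $R$ with the identical fiber-dimension count to produce $R'$ and $c$. The only cosmetic difference is that you obtain that large component by invoking Proposition \ref{2.4} in contrapositive form, whereas the paper argues directly that $W_b\cap R$ is positive-dimensional for every $b$ and assembles those fibers into the subvariety $P$.
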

\begin{proof}
In what follows we assume $B$ is irreducible because the general result follows from the irreducible case.
Suppose $R$ is an effective $C$-cycle with $\eta\cdot [R]<0$. Then $W_b\cap R$ must have positive dimension for any $b\in B$.  Let $P_b$ be the union of the positive dimensional components of $W_b\cap R$. Let $P\subset B\times R$ be the union of the $P_b$.  $P$ is locally closed of dimension at least $\dim B+1$. Pick one component $P'$ of $P$ and let $R'$ be the image of $P'$ in $R$. The fiber $P_r$ in $P'$ over a general point $r$ of $R'$ has dimension at least $\dim B+1-\dim R'$, so the preimage of $P_r$ in $B$ also has dimension at least $\dim B+1-\dim R'$. So, if $R'$ has codimension $c$ in $R$, so $R'$ has dimension $C-c$, and the codimension of the preimage of $P_r$ in $B$ is at most $\dim R'-1=C-c-1$.
\end{proof}
\subsection{The stable pseudoeffective cones}
If $H_2-H_1$ is effective, the product map $\mu:X_{H_1}\times Y_{H_2-H_1}\ra X_{H_2}$ induces pushforward isomorphisms $\mu_*:N_i(X_{H_1})\xrightarrow{\sim} N_i(X_{H_2})$ for $i\leq r-1$. This map preserves effectiveness for cycles, and leads to the following definitions:

For any $i$,  the \emph{stable space of numerical cycles} $N_i(X_{stab})$ is  defined by
\[
N_i(X_{stab})=\bigoplus_{0\leq j\leq \min(i,n)} N_j (X).
\]
For any $H$, there is a natural map $p_H:N_i(X_H)\ra N_i(X_{stab})$, coming from sending the cycle $\eta e_j$ to the cycle $\eta \in N_j(X)\subseteq N_i(X_{stab})$; in what follows, we will also use $\eta e_j\in N_i(X_{stab})$ to denote this cycle.  The map $p_H$ commute with the pushforward maps induced by $\mu$ and are isomorphisms if $i\leq r-1$. 

\begin{definition}
The \emph{stable pseudoeffective cone of $i$-cycles} $\Psef_i(X_{stab})\subset N_i(X_{stab})$ is the closure of the union of the cones $p_{H*}(\Psef_i(X_H))$ as $H$ ranges over all basepoint free classes on $X$.
\end{definition}
The following lemma determines many of the cycles in $\Psef_i(X_{stab}$.
\begin{lemma}\label{315}
Let $0\leq i<n$ and let $\beta \in \Psef_i(X)$ be effective. Then, for any $j\geq i$, there is an ample divisor $H$ such that $\beta e_i\in \Psef_j(X_H)$.
\end{lemma}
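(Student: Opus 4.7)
I will construct an explicit effective $j$-cycle on $X_H$ whose class equals $\beta e_i$ in $N_j(X_H)$, for $H$ an appropriately chosen ample divisor. By linearity of the pseudoeffective cone I can reduce to the case $\beta = [Z]$ for a single irreducible $i$-dimensional subvariety $Z \subseteq X$. The cycle I will use is $I := Z \times \Lambda \subseteq X \times Y_H$, where $\Lambda$ is a general linear subspace of the locus $\PP(W) \subseteq Y_H$ of divisors containing $Z$, chosen to have dimension exactly $j - i$.

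I first arrange that $\dim W \geq j - i + 1$, where $W := H^0(X, \OI_Z \otimes \OO(H))$. Fix an ample divisor $A$ on $X$ and take $H = mA$ with $m \gg 0$. By Serre vanishing, $H^1(X, \OI_Z(mA)) = 0$ for $m$ large, so the ideal sequence gives $\dim W = h^0(X, mA) - h^0(Z, mA|_Z)$; asymptotic Riemann--Roch shows the first term grows like $m^n \cdot A^n/n!$ while the Hilbert polynomial bound gives $h^0(Z, mA|_Z) = O(m^i)$, and since $i < n$ this difference exceeds $j - i + 1$ for $m$ sufficiently large. For such $H$ and a general $(j - i)$-dimensional $\Lambda \subseteq \PP(W)$, every $(p, D) \in I$ satisfies $p \in Z \subseteq D$, so $I$ is contained in $X_H$ and defines an irreducible effective $j$-cycle.

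What remains is to verify $[I] = \beta e_i$, i.e., that $[I] \cdot \pi_1^*(\alpha) \cdot \xi^{j-k}$ equals $\beta \cdot \alpha$ when $k = i$ and vanishes for any other $k$ and $\alpha \in N^k(X)$. The key observations are that $\pi_1|_I$ is the first projection $p_Z : I \to Z$, and that $\xi|_I$ is the pullback via the second projection $p_\Lambda : I \to \Lambda$ of the hyperplane class $h$ on $\Lambda \cong \PP^{j-i}$. The projective bundle formula for $p_Z$ then collapses the intersection to $\int_Z (\alpha|_Z) \cdot p_{Z*}(p_\Lambda^*(h^{j-k}))$, and this pushforward is nonzero only when $j - k$ matches the fiber dimension $j - i$. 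Thus the intersection vanishes for $k < i$ (where $h^{j-k}$ already vanishes on $\Lambda$) and for $k > i$ (where $p_{Z*}(p_\Lambda^*(h^{j-k}))$ lies in $N_k(Z) = 0$), and equals $\int_Z \alpha|_Z = \beta \cdot \alpha$ when $k = i$. Effectiveness of $I$ then gives $\beta e_i \in \Psef_j(X_H)$. The main bookkeeping is the three-case analysis together with the growth estimate for $\dim W$; once $\xi|_I$ is recognized as pulled back from $\Lambda$, each verification is a routine dimension count.
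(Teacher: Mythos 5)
Your proof is correct, and the witness cycle you build is geometrically the same one the paper uses: $Z$ times a $(j-i)$-plane of divisors containing $Z$. The difference is in packaging. The paper factors $H=H_1+H_2$ with both summands merely basepoint free, takes $Z_1=\{(p,D_Z):p\in Z\}\subset X_{H_1}$ for a single divisor $D_Z\supseteq Z$ of class $H_1$, and pushes $Z_1\times\Lambda$ forward under the product map $\mu:X_{H_1}\times Y_{H_2}\ra X_{H_1+H_2}$, reading off the class (a positive multiple of $\beta e_i$) from the product formula, Proposition \ref{productFormula}. You instead work inside a single $X_H$ with $H=mA$, use the growth of $h^0(X,\OI_Z(mA))$ to find a $(j-i)$-plane $\Lambda$ of divisors containing $Z$, and verify $[Z\times\Lambda]=\beta e_i$ by a direct K\"unneth-type computation against the classes $\pi_1^*(\alpha)\xi^{j-k}$, which do span the relevant dual space by the projective bundle formula. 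Your route is self-contained (no appeal to the product formula) and has the small advantage of literally producing an \emph{ample} $H$ as the statement demands; the paper's route is shorter given that the product formula is already established and is the template reused throughout Section \ref{formulary}. Two minor points to tidy: the locus of divisors containing $Z$ is the linear subspace of $Y_H=\PP(H^0(X,H)^*)$ corresponding to $W$, so your ``$\PP(W)$'' should be read with the paper's convention in mind; and in the reduction to irreducible $Z$ you should note that $m$ can be chosen large enough to work simultaneously for the finitely many components of a cycle representing $\beta$, so that a single $H$ serves for all of them.
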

\begin{proof}
Given $\beta$, $j$, let $Z$ be a cycle on $X$ of class $\beta$, and let $H_1$ be a basepoint free class such that $Z$ is supported on some divisor $D_Z$ of class $H_1$.
Let $H_2$ be a basepoint free divisor class with $h^0(X,H_2)\geq j-i+1$. Let $Z_1\subset X_{H_1}$ consist of pairs $(p,D_Z)$ with $p\in Z$. Then $Z_1\in \Psef_i(X_{H_1})$ has class $\beta e_i$, and if $\Lambda\subset Y_{H_2}$ is any $(j-i)$-plane, the image of $Z_1\times \Lambda\subset X_{H_1}\times Y_{H_2}$ in $X_{H_1+H_2}$ has class a multiple of $e_i\beta$. 
\end{proof}
Lemma \ref{315} implies  a variant of Theorem \ref{lowCodimension} for stable cones.
\begin{corollary}
If $0\leq i< n$, then
\[
\Psef_i(X_{stab})=\sum_{0\leq j\leq i}\Psef_j(X)e_{j}.
\]
\end{corollary}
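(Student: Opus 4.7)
The plan is to prove the two containments separately. For the inclusion $\sum_{0\leq j\leq i}\Psef_j(X)e_{j}\subseteq \Psef_i(X_{stab})$, I will invoke Lemma \ref{315} once for each index $j\leq i$. Fix such a $j$ and let $\beta\in\Psef_j(X)$ be represented by an effective cycle. Since $j\leq i<n$, Lemma \ref{315} (with the roles of its $i$ and $j$ interchanged relative to its stated form) produces a basepoint free class $H$ with $\beta e_j\in \Psef_i(X_H)$; applying $p_{H*}$ then yields $\beta e_j\in \Psef_i(X_{stab})$. Such effective classes span a cone whose closure is all of $\Psef_j(X)$, and the cones $\Psef_j(X)e_j$ lie in independent direct summands $N_j(X)$ of $N_i(X_{stab})$, so their Minkowski sum is closed. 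Passing to closures gives the desired inclusion.

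For the reverse inclusion, it suffices to show $p_{H*}(\Psef_i(X_H))\subseteq \sum_{0\leq j\leq i}\Psef_j(X)e_j$ for every basepoint free $H$; the closure of the union over all such $H$ then lies in the same closed cone. Let $\eta\in \Psef_i(X_H)$ and write it covariantly as $\eta=\sum_j \eta_j e_j$. Because $i<n$, every index $j$ appearing in $p_H(\eta)$ satisfies $0\leq j\leq i$. The defining property of the covariant components, together with the projection formula, identifies $\eta_j=(\pi_1)_*(\eta\cdot \xi^{i-j})$ for each such $j$. Since $\xi$ is a nef divisor on $X_H$, iterating Proposition \ref{nefDivisorProducts} gives $\eta\cdot \xi^{i-j}\in \Psef_j(X_H)$; proper pushforward preserves pseudoeffectivity, so $\eta_j\in \Psef_j(X)$, as required.

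Taken together these two inclusions yield the corollary. I do not anticipate a substantial obstacle: the forward containment is essentially a direct translation of Lemma \ref{315}, while the reverse is a formal consequence of nefness of $\xi$ combined with the compatibility of the covariant notation with pushforward. The hypothesis $i<n$ enters only to ensure $\min(i,n)=i$, so that the full covariant class $\sum_j \eta_j e_j$ is captured by $p_H(\eta)$; for $i\geq n$ one would lose the components with $j=n$ and beyond, which is precisely what makes the stable cone more delicate in that regime.
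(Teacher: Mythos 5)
Your proof is correct and follows essentially the same route as the paper: the containment $\supseteq$ is obtained by applying Lemma \ref{315} to effective classes and passing to closures, and the containment $\subseteq$ rests on the nefness of the classes $\xi^{i-j}\pi_1^*(\Nef^j(X))$. The only (cosmetic) difference is that the paper states the latter as a containment of nef cones and dualizes, whereas you unpack the duality explicitly by showing each covariant component $\eta_j=(\pi_1)_*(\eta\cdot\xi^{i-j})$ of a pseudoeffective class is pseudoeffective via Proposition \ref{nefDivisorProducts} and pushforward.
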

\begin{proof}
For any basepoint free $H$ we have
\[
\Nef^i(X_H)\supseteq \sum_{0\leq j\leq \min(i,n)} \xi^{i-j}\Nef^j(X),
\]
hence
\[
\Psef_i(X_H)\subseteq \sum_{0\leq j \leq i} \Psef_j(X) e_j.
\]
Since the above containment holds for \emph{any} $H$, we have
\[
\Psef_i(X_{stab})\subseteq \sum_{0\leq j \leq i} \Psef_j(X) e_j
\]

And by Lemma \ref{315}, if $\beta\in \Psef_j(X)$ is \emph{effective}, then for any $i\geq j$ $\beta e_j\in \Psef_i(X_H)$ for some sufficiently ample $H$, so $\Psef_i(X_{stab})$ contains $\beta e_j$ for all effective $\beta$. Since $\Psef_i(X_{stab})$ is closed, we have
\[
\Psef_i(X_{stab})\supseteq \sum_{0\leq j \leq i} \Psef_j(X) e_j.
\]
\end{proof}
\begin{corollary}\label{mostOfTheStableCone}
If $\Psef_i(X)$ is generated by finitely many effective cycles for every $i$, then for any $j\geq n-1$ and $H$ sufficiently ample (depending on $j$), $\Psef_j(X_H)$ contains $e_0 \Psef_0(X),e_1\Psef_1(X),\ldots,e_{n-1}\Psef_{n-1}(X)$. Moreover, $\langle e_0\Psef_0(X),\ldots,e_{n-1}\Psef_{n-1}(X)\rangle$ is exactly the subcone of $\Psef_j(X_H)$ of cycles that have 0 intersection with $H^n\xi^{j-n}$. 
\end{corollary}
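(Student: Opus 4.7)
The plan is to establish the containment $\sum_{k<n} e_k\Psef_k(X)\subseteq \Psef_j(X_H)$ and separately the reverse inclusion on the subcone of $\Psef_j(X_H)$ cut out by $H^n\xi^{j-n}=0$. The case $j=n-1$ is immediate from Theorem~\ref{lowCodimension} applied with $i=n-1<\dim X$, so the interesting range is $j\geq n$.

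For the first containment, I would use the finite-generation hypothesis to reduce to finitely many effective generators $\beta_{i,\alpha}$ of each $\Psef_i(X)$ with $i<n$. Lemma~\ref{315} applied to each $\beta_{i,\alpha}$ yields a basepoint-free class $H_{i,\alpha}$ with $\beta_{i,\alpha}e_i\in \Psef_j(X_{H_{i,\alpha}})$. Choosing $H$ large enough that $H-H_{i,\alpha}$ is effective and basepoint free for every $(i,\alpha)$---possible because the collection is finite---Proposition~\ref{productFormula} with $s=j$, $t=0$, and $\delta$ the class of a point on $Y_{H-H_{i,\alpha}}$ shows $\mu_*(\beta_{i,\alpha}e_i\times[\mathrm{pt}])$ has the same covariant expression $\beta_{i,\alpha}e_i$ (every binomial coefficient in the formula equals $1$). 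Hence $\beta_{i,\alpha}e_i\in\Psef_j(X_H)$, and closedness plus convexity give $\sum_{k<n} e_k\Psef_k(X)\subseteq \Psef_j(X_H)$.

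For the reverse inclusion, I first compute in the covariant notation that $\eta\cdot H^n\xi^{j-n}=H^n\cdot \eta_n$, which (since $H^n>0$ and $N_n(X)=\RR\cdot[X]$) vanishes exactly when $\eta_n=0$. So suppose $\eta\in\Psef_j(X_H)$ satisfies $\eta_n=0$. Multiplying by the nef class $\xi^{j-n+1}$ and applying Proposition~\ref{nefDivisorProducts} gives $\eta\cdot\xi^{j-n+1}\in\Psef_{n-1}(X_H)$; the remark that intersection with $\xi$ preserves covariant components for $i$-cycles with $i\geq n+1$, together with a direct check for the step $i=n\to n-1$, shows $(\eta\cdot\xi^{j-n+1})_k=\eta_k$ for all $k\leq n-1$. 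If $H$ is also ample enough that every generator of each $\Psef_k(X)$ ($k<n$) is supported on a divisor of class $H$, Theorem~\ref{lowCodimension} gives $\Psef_{n-1}(X_H)=\sum_{k\leq n-1}\Psef_k(X)e_k$, and comparing components forces $\eta_k\in\Psef_k(X)$ for each $k<n$. Finally, requiring $h^0(X,H)\geq j+2$ makes $p_H\colon N_j(X_H)\to N_j(X_{stab})$ an isomorphism via the projective bundle formula, so $\eta$ is determined by its covariant expression $(0,\eta_{n-1},\ldots,\eta_0)$, and we conclude $\eta\in\sum_{k<n}\Psef_k(X)e_k$.

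The main obstacle is uniformity: ``$H$ sufficiently ample'' must simultaneously dominate each auxiliary $H_{i,\alpha}$, support each of the finitely many generating cycles of each $\Psef_k(X)$ ($k<n$) in a divisor of class $H$, and satisfy $h^0(X,H)\geq j+2$. Finite generation ensures this is a finite list of conditions, each holding for $H$ sufficiently positive, so a common $H$ suffices.
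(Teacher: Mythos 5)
Your proof is correct, and its first half is essentially the paper's: apply Lemma \ref{315} to the finitely many effective generators and use finiteness to find one $H$ that works for all of them. Your explicit transfer from $X_{H_{i,\alpha}}$ to $X_H$ via $\mu_*(-\times[\mathrm{pt}])$ (where all binomial coefficients in Proposition \ref{productFormula} equal $1$) just makes precise the uniformity the paper compresses into ``sufficiently ample.'' The second half is where you diverge. The paper works directly at level $j$: it observes that $\langle e_0\Psef_0(X),\ldots,e_n\Psef_n(X)\rangle$ is the dual cone of $\langle \Nef^0(X)\xi^{j},\ldots,\Nef^n(X)\xi^{j-n}\rangle$, which is a subcone of $\Nef^j(X_H)$, and then intersects with the hyperplane $\eta\cdot H^n\xi^{j-n}=0$ to delete the $e_n$ term. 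You instead multiply by $\xi^{j-n+1}$, land in $\Psef_{n-1}(X_H)$, and quote the \emph{equality} $\Psef_{n-1}(X_H)=\sum_{k\le n-1}\Psef_k(X)e_k$ from Theorem \ref{lowCodimension}. The two arguments are equivalent in substance---pairing $\eta\,\xi^{j-n+1}$ against $\xi^{n-1-k}\Nef^k(X)$ is literally pairing $\eta$ against $\xi^{j-k}\Nef^k(X)$---but your route requires the hypothesis of Theorem \ref{lowCodimension} (each generator of $\Psef_k(X)$, $k<n$, supported on a divisor of class $H$), whereas the paper's one-sided nef containment holds for any basepoint-free $H$; since the statement already permits $H$ sufficiently ample, this costs nothing. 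Your insistence on $h^0(X,H)\ge j+2$ so that $p_H$ is an isomorphism and $\eta$ is recovered from its covariant components is a hypothesis the paper uses tacitly, and it is worth having stated it explicitly.
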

\begin{proof}
By Lemma \ref{315}, for $H$ sufficiently ample, $\Psef_j(X_H)$ contains any cycle $\eta e_k$ for $\eta$ some extremal $k$-cycle on $X$. Since there are only finitely many extremal cycles total, if $H$ is sufficiently ample, $\Psef_j(X_H)$ contains all of them. Conversely, $N_j(X_H)$,  the cone $ \langle e_0\Psef_0(X),\ldots,e_{n-1}\Psef_{n-1}(X),e_n\Psef_n(X)\rangle$ is  the dual cone to \[
\langle \Nef^0(X)\xi^j, \Nef^1(X)\xi^{j-1},\ldots, \Nef^{n}(X)\xi^{j-n}\rangle,\] which is clearly a subcone of $\Nef^j(X_H)$. So the subcone of $\Psef_j(X_H)$ of cycles that have 0 intersection with $H^n\xi^{j-n}$ must be contained in $\langle e_0 \Psef_0(X),e_1\Psef_1(X),\ldots,e_{n-1}\Psef_{n-1}(X)\rangle$. 
\end{proof}
Based on Corollary \ref{mostOfTheStableCone}, we ask the following question.

\begin{question}
Suppose $\Psef_j(X)$ is generated by a finite collection of effective classes for each $j$, and fix $i$.  Is  $\Psef_i(X_{stab})$  polyhedral? Is there some $H$ such that  $\Psef_i(X_{stab})=\Psef_i(X_H)$?
\end{question}
By Corollary \ref{mostOfTheStableCone}, the most interesting part of the stable cone consist of cycles that have nonzero intersection with $H^n\xi^{j-n}$. %There are some relatively easily constructed cycles.
%For each $H$ effective,  if $h^0(H)\geq j-n+2$, we have $(1, H,0,\ldots,0)\in \Psef_j(X_H)$, hence $[X]e_n+He_{n-1}\in \Psef_j^{stab}(X)$.  Likewise, if $h^0(H)\leq j-n+1$, we have
%\[
%(h^0(H)-1)[X]e_n+(i-n+1)H\in \Psef_i^{stab}(X).
%\]

%On $\PP^2$ I think we can work this out. By Lemma \ref{almostNef}, the cycles $dH\xi^{\binom{d+2}{2}-1}-(\binom{d+2}{2}-1)[pt]\xi^{\binom{d+2}{2}-2}$ is nef on $X_{2,d'}$ on cycles not supported on the loci $X_1\times Y_{d'-1},\ldots, X_d\times Y_{d'-d}$. 

\section{Universal hypersurfaces in projective space}\label{projSpace}
Let $X_{n,d}$ be the universal degree $d$ hypersurface on $\PP^n$, and let $H$ be the class of a hyperplane.  In this section, when we have a cycle
\[
\eta=(d_n [\PP^n], \ldots, d_0 H^n)= d_n[\PP^n]e_n+d_{n-1}He_{n-1}+\cdots+d_0 H^n e_0,
\]
we suppress the classes $H^i$ in notation:
\[
\eta=(d_n,\ldots,d_0)=d_ne_n+d_{n-1}e_{n-1}+\cdots+d_0e_0.
\]
We set $r=\dim(Y_{n,d})=\binom{n+d}{d}-1$, so $X_{n,d}$ has dimension $r+n-1$.
\begin{lemma}\label{projectiveSpaceBPFCycles}
If $i<n$, then the cycle $e_i\in N_{n-1}(X_{n,d})$ is  basepoint free, and hence nef.
\end{lemma}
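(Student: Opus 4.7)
I would prove the lemma by exhibiting an explicit effective cycle representing $e_i$ and placing it in a family that witnesses basepoint-freeness. Fix a linear $i$-plane $\Lambda = \PP^i \subset \PP^n$. The set $S_\Lambda \subset Y_{n,d}$ of degree-$d$ hypersurfaces containing $\Lambda$ is cut out by the $\binom{i+d}{i}$ linear conditions expressing that the restriction of a degree-$d$ form to $\Lambda$ vanishes, hence is a linear subspace of $Y_{n,d}$ of dimension $r - \binom{i+d}{i}$; since $i < n$, this dimension is at least $n - 1 - i$. Choose a generic $(n-1-i)$-dimensional linear subspace $\Pi \subset S_\Lambda$ and set $Z_{\Lambda,\Pi} := \Lambda \times \Pi \subset \PP^n \times Y_{n,d}$. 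Every $(p,D) \in Z_{\Lambda,\Pi}$ satisfies $p \in \Lambda \subset D$, so $Z_{\Lambda,\Pi}$ lies in $X_{n,d}$ as a subvariety of dimension $n-1$.

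To verify $[Z_{\Lambda,\Pi}] = e_i$, I would pair with the codimension-complementary classes $H^j\xi^{n-1-j}$ for $0 \leq j \leq n-1$. On $Z_{\Lambda,\Pi} \cong \PP^i \times \PP^{n-1-i}$, $H$ restricts to the hyperplane class pulled back from the first factor and $\xi$ to the hyperplane class pulled back from the second factor, so
\[
\int_{Z_{\Lambda,\Pi}} H^j \xi^{n-1-j} = \Big(\int_{\PP^i} H^j\Big)\Big(\int_{\PP^{n-1-i}} \xi^{n-1-j}\Big),
\]
which equals $1$ if $j = i$ and vanishes otherwise, matching the defining pairing of $e_i$.

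For basepoint-freeness, let $W$ parameterize admissible pairs $(\Lambda,\Pi)$: it is a Grassmannian bundle over $G(i+1,n+1)$ with fiber $G(n-i, S_\Lambda)$ over $\Lambda$, hence irreducible and projective. The incidence variety $U := \{(\Lambda,\Pi,p,D) \in W \times X_{n,d} : p \in \Lambda,\, D \in \Pi\}$ projects projectively and surjectively onto $W$ with equidimensional fibers $\Lambda \times \Pi$ of dimension $n-1$, and carries the evaluation $s : U \to X_{n,d}$, $(\Lambda,\Pi,p,D) \mapsto (p,D)$, whose pushforward of a generic fiber realizes $e_i$. The main obstacle I expect is verifying flatness of $s$: for $d = 1$, $\Pi = S_\Lambda$ is forced, $s$ is surjective, and its fibers are the Grassmannians of $i$-planes through $p$ inside $D$, so flatness is automatic; for $d \geq 2$ the image of $s$ is the proper closed subvariety of $(p,D) \in X_{n,d}$ for which $D$ contains an $i$-plane through $p$, and concluding $e_i \in \BPF_{n-1}(X_{n,d})$ requires either replacing the family (for instance by pushing the strongly basepoint-free family representing $e_i$ on $X_{n,1}$ forward under a suitable product map $\mu : X_{n,1} \times Y_{n,d-1} \to X_{n,d}$) or approximating $e_i$ as a limit of strongly basepoint-free classes.
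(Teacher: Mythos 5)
Your construction does produce an effective cycle of class $e_i$ --- the pairing computation on $Z_{\Lambda,\Pi}\cong\PP^i\times\PP^{n-1-i}$ is correct --- but, as you yourself flag at the end, it does not prove the lemma for $d\geq 2$: the evaluation map $s$ has image the proper closed locus of pairs $(p,D)$ such that $D$ contains an $i$-plane through $p$, so $s$ cannot be flat and Definition \ref{bpfdef} is not satisfied. Neither of your proposed repairs closes this gap. Pushing the strongly basepoint free family on $X_{n,1}$ forward under $\mu:X_{n,1}\times Y_{n,d-1}\ra X_{n,d}$ does give an effective cycle of class $e_i$ (by Proposition \ref{productFormula} with $t=0$), but the resulting family is supported on the locus of pairs $(p,D)$ where $D$ has a hyperplane component through $p$ --- again a proper closed subvariety --- so the composite evaluation map is still not flat, and strong basepoint freeness is not preserved by such a pushforward. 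Saying one could ``approximate $e_i$ by strongly basepoint free classes'' is not an argument without exhibiting the approximating classes; indeed the whole content of the lemma is to produce one.

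The missing idea, and the route the paper takes, is to replace linear $i$-planes by reduced complete intersections $U_w$ of $n-i$ hypersurfaces of degree $d$. The fiber of the resulting family over $w$ is $\{(p,D):p\in U_w,\ U_w\subset D\}$, which is $U_w$ times the $(n-i-1)$-dimensional linear series spanned by the defining equations of $U_w$, hence still of dimension $n-1$, and its class is $d^{n-i}e_i$ since its pairing with $H^i\xi^{n-i-1}$ is $\deg U_w=d^{n-i}$. The decisive difference from your family is that \emph{every} pair $(p,D)\in X_{n,d}$ lies on members of this family --- complete $D$ by $n-i-1$ further general degree-$d$ hypersurfaces through $p$ --- with constant fiber dimension, so the evaluation map is surjective and flat and $d^{n-i}e_i$ is strongly basepoint free; as $\BPF_{n-1}(X_{n,d})$ is a cone, this yields $e_i\in\BPF_{n-1}(X_{n,d})$. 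Your $i$-plane construction remains useful (the paper uses exactly such cycles elsewhere, e.g.\ in Proposition \ref{slightlyHigherDim}), but only for effectiveness, not for the nefness that this lemma is needed to supply in Proposition \ref{lowDim}.
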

\begin{proof}
Let $W$ be the parameter space of all reduced complete intersections of $n-i$ hypersurfaces of degree $d$ in $\PP^n$,  and let $U$ be the universal complete intersection over $W$, so the fiber $U_w$ of $U$ over a point $w\in W$ is the complete intersection in $\PP^n$ corresponding to $w$.  Let $U'$ consist of pairs $w,(p,D)\in W\times X_{n,d}$ satisfying $p\in U_w$ and $U_w\subset D$.  Then $U'$ admits a flat projection $s:U'\ra X_{n,d}$ and a projective map $p:U'\ra W$ through $U$.  Hence,  given $w\in W$, the numerical class of $s(p^{-1}(w))$ is strongly basepoint free.  $s(p^{-1}(w))$ consists of all pairs $(p,D)$ with $p\in U_w$ and $D$ in the $(n-i-1)$-dimensional linear series giving the complete intersection $U_w$, and is hence a $(n-1)$-cycle having intersection $\deg(U_w)=d^{n-i}$ with $H^i\xi^{n-i-1}$ and intersection $0$ with all other $H^j\xi^{n-j-1}$. Hence $[s(p^{-1}(w))]=d^{n-i}e_i$ and $e_i$ is basepoint free.
\end{proof}

\begin{proposition}\label{lowDim}
 Let $0\leq i<n$. Then $\Psef_i(X_{n,d})=\Nef_i(X_{n,d})=\langle e_0,\ldots, e_i\rangle$ and $\Nef^i(X_{n,d})=\Psef^i(X_{n,d})=\langle \xi^i,H\xi^{i-1}, \ldots, H^i\rangle$. 

\end{proposition}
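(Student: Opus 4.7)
The plan is to deduce most of the proposition from Theorem \ref{lowCodimension}, then upgrade the Nef/Psef equalities by verifying extremal generators lie in both cones. First I will invoke Theorem \ref{lowCodimension} with $X = \PP^n$ and $H$ the hyperplane class. The hypotheses are immediate: $\Psef_j(\PP^n) = \RR_{\geq 0} H^{n-j}$ is the ray spanned by a $j$-plane, and any $j$-plane is contained in (hence supported on) some hyperplane of class $H$. Theorem \ref{lowCodimension} then yields $\Nef^i(X_{n,d}) = \langle \xi^i, H\xi^{i-1}, \ldots, H^i\rangle$ for $i < n$. The dual statement $\Psef_i(X_{n,d}) = \langle e_0, \ldots, e_i\rangle$ follows from the pairing $e_k \cdot H^j \xi^{i-j} = \delta_{jk}$, which identifies $\{e_k\}_{k=0}^i \subset N_i(X_{n,d})$ and $\{\xi^{i-j} H^j\}_{j=0}^i \subset N^i(X_{n,d})$ as dual bases.

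For the Nef $=$ Psef upgrades, I will show each extremal ray of one cone sits in the other cone. On the cocycle side, each $\xi^{i-j} H^j$ is represented by the proper intersection of $i-j$ general hyperplane pullbacks from $Y_{n,d}$ with $j$ general hyperplane pullbacks from $\PP^n$, giving an explicit effective codimension-$i$ subvariety and hence $\Nef^i \subseteq \Psef^i$. On the cycle side, I will show each $e_k \in N_i(X_{n,d})$ is nef for $k \leq i < n$: by Lemma \ref{projectiveSpaceBPFCycles}, $e_k$ is basepoint free (hence nef) in $N_{n-1}(X_{n,d})$; viewed via Poincar\'e duality as a cocycle in $N^r(X_{n,d})$, iterated multiplication by the nef divisor $\xi$ (using Proposition \ref{nefDivisorProducts}) produces a nef cocycle $e_k \cdot \xi^{n-1-i} \in N^{r+n-1-i}(X_{n,d})$. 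The covariant decomposition identifies $e_k \cdot \xi^{n-1-i}$ with $e_k$ in $N_i(X_{n,d})$: pairing $e_k \cdot \xi^{n-1-i}$ against $H^j \xi^{i-j}$ yields $e_k \cdot H^j \xi^{n-1-j} = \delta_{jk}$ by the known $N_{n-1}$ pairings. This shows $\Psef_i \subseteq \Nef_i$.

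The reverse inclusions are formal consequences of the dual pairing. Given $\eta = \sum_k a_k e_k \in \Nef_i$, nonnegative pairing with each effective class $\xi^{i-j} H^j$ forces $a_j \geq 0$, so $\eta \in \Psef_i$; symmetrically, $\gamma = \sum_j b_j \xi^{i-j} H^j \in \Psef^i$ must pair nonnegatively with each nef class $e_k$, forcing $b_k \geq 0$ and $\gamma \in \Nef^i$. I expect the main delicate step to be the inductive nefness argument, where careful bookkeeping between cycle and cocycle indexing via Poincar\'e duality on $X_{n,d}$ is required to legitimately apply Proposition \ref{nefDivisorProducts} and extract the class $e_k \in N_i$ from the basepoint-free class $e_k \in N_{n-1}$.
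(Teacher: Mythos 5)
Your argument is correct and takes essentially the paper's route: the paper likewise gets nefness of each $e_k\in N_i(X_{n,d})$ from Lemma \ref{projectiveSpaceBPFCycles} combined with multiplication by the nef divisor $\xi$, places $\xi^{i-j}H^j$ in $\Nef^i\cap\Psef^i$ as a product of the basepoint-free divisors $\xi$ and $H$, and concludes by dualizing (it simply sandwiches the cones directly instead of routing part of the argument through Theorem \ref{lowCodimension}, whose proof is the same idea). The only slip is that $X_{n,d}$ is the universal hypersurface for the class $\OO(d)$, so Theorem \ref{lowCodimension} should be invoked with $H=\OO(d)$ rather than the hyperplane class; the hypothesis still holds because every $j$-plane lies on a degree-$d$ hypersurface, so nothing in your argument breaks.
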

\begin{proof}
By Lemma \ref{projectiveSpaceBPFCycles}, the cone $\langle e_0, \ldots,e_i\rangle$ is contained in  $\Eff_i(X_{n,d})\cap \Nef_i(X_{n,d})$.  Likewise, we have $\langle \xi^{i}, H\xi^{i-1},\ldots,H^i\rangle\subseteq \Nef^i(X_{n,d}) \cap \Eff^i(X_{n,d}))$ since its generators are products of the nef divisors $\xi,H$. Dualizing this second containment, we have
\[
\langle e_0, \ldots,e_i\rangle \supseteq \Eff_i(X_{n,d})+\Nef_i(X_{n,d}),
\]
hence the result.

\end{proof}
This proposition determines $\Psef_i(X_{n,1})$ for all $n$ and $i$. 
\begin{corollary}[Proposition \ref{hyperplaneCone}]\label{456}
If $0\leq i<n$, then
\[
\Nef^i(X_{n,1})=\Psef^i(X_{n,1})=\langle H^i, H^{i-1}\xi,\ldots,\xi^i\rangle
\]
and
\[
\Nef_i(X_{n,1})=\Psef_i(X_{n,1})=\langle e_0,\ldots,e_i\rangle
\]
\end{corollary}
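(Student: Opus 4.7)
The plan is to deduce this corollary as the immediate specialization of Proposition \ref{lowDim} to the case $d=1$. Proposition \ref{lowDim} is proved in full generality for every $d\geq 1$, and its statement for $0\leq i<n$ gives exactly
\[
\Nef^i(X_{n,d})=\Psef^i(X_{n,d})=\langle \xi^i,H\xi^{i-1},\ldots,H^i\rangle,\quad \Psef_i(X_{n,d})=\Nef_i(X_{n,d})=\langle e_0,\ldots,e_i\rangle.
\]
Instantiating at $d=1$ yields the two claimed equalities verbatim, so there is essentially nothing new to prove at the level of cones.

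The only remaining compatibility check is that this covariant description agrees with the geometric description of Proposition \ref{hyperplaneCone} phrased in terms of the incidence cycles $Z_j$. For this I would fix a $j$-plane $\Lambda_j\subset\PP^n$ and observe that $Z_j$ is naturally a product $\Lambda_j\times\PP^{n-1-j}$, since the hyperplanes in $\PP^n$ containing $\Lambda_j$ form a linear subseries of $Y_{n,1}$ of dimension $n-1-j$. Under this identification $H|_{Z_j}$ pulls back to the hyperplane class on the first factor and $\xi|_{Z_j}$ pulls back to the hyperplane class on the second factor, so the pairing $[Z_j]\cdot H^k\xi^{n-1-k}$ equals $1$ when $k=j$ and $0$ otherwise. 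By the defining property of the $e_j$-notation this shows $[Z_j]=e_j$ in $N_{n-1}(X_{n,1})$, and then $\xi^{n-i-1}[Z_j]=e_j$ in $N_i(X_{n,1})$ for every $0\leq j\leq i$, so the two bases coincide.

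There is no substantive obstacle here: the corollary is a direct transcription of Proposition \ref{lowDim} with $d=1$, and the comparison to the $Z_j$-basis of Proposition \ref{hyperplaneCone} reduces to a single intersection computation on the product $\Lambda_j\times\PP^{n-1-j}$. If anything deserves emphasis in the write-up, it is that $d=1$ falls within the hypotheses of Proposition \ref{lowDim} (so no independent argument is required) and that the equality $[Z_j]=e_j$ follows purely from the splitting of $Z_j$ as a product of projective spaces.
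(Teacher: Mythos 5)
Your proposal matches the paper exactly: the corollary is stated immediately after Proposition \ref{lowDim} with no separate proof, precisely because it is the $d=1$ instance of that proposition. Your additional verification that $[Z_j]=e_j$ via the splitting $Z_j\cong\Lambda_j\times\PP^{n-1-j}$ is correct and harmlessly supplements the paper's implicit identification of the $e_j$-basis with the incidence cycles of Proposition \ref{hyperplaneCone}.
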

We next study effective cycles on $X_{n,2}$. We need some positivity information, provided by the following two results.
\begin{lemma}\label{367}
If $d\geq 2$, then on $X_{n,d}$  the cycle $\eta=H^{n-1}\xi-H^n$ is strongly basepoint free.
\end{lemma}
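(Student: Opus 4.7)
The plan is to realize $\eta = H^{n-1}\xi - H^n = H^{n-1}(\xi - H)$ as the pushforward class of a divisor of class $\xi - H$ on a line fiber $\pi_1^{-1}(\ell) \cong \PP(E|_\ell)$, parameterized by $\ell$ varying over the Grassmannian of lines $G = G(1, n)$. First I would verify that $|\xi - H|$ on $\pi_1^{-1}(\ell)$ is a nontrivial linear system: by the projection formula $H^0(\pi_1^{-1}(\ell), \OO(\xi - H)) = H^0(\ell, E|_\ell(-1))$, and restricting the defining sequence $0 \to \OO(-dH) \to V^* \otimes \OO \to E \to 0$ to $\ell$ and twisting by $\OO_\ell(-1)$ yields
\[
0 \to \OO_\ell(-d-1) \to V^* \otimes \OO_\ell(-1) \to E|_\ell(-1) \to 0.
\]
The long exact sequence and Serre duality on $\PP^1$ give $h^0(E|_\ell(-1)) = h^1(\OO_\ell(-d-1)) = d$, so $|\xi - H|$ on $\pi_1^{-1}(\ell)$ has projective dimension $d - 1 \geq 1$ for $d \geq 2$.

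To construct the family, I would let $\mathcal{E}$ be the rank-$d$ bundle on $G$ with fiber $H^0(\pi_1^{-1}(\ell), \OO(\xi - H))$ at $\ell$, set $W = \PP_G(\mathcal{E})$ (a smooth $\PP^{d-1}$-bundle over $G$ of dimension $2n + d - 3$), and define the universal divisor
\[
U = \{((\ell, [\sigma]), (p, D)) \in W \times X_{n,d} : p \in \ell,\ \sigma(p, D) = 0\},
\]
a local complete intersection in the smooth ambient $W \times X_{n,d}$. Then the projection $p \colon U \to W$ is projective with fibers divisors of class $\xi - H$ on $\pi_1^{-1}(\ell)$, of dimension $r - 1$, and $s \colon U \to X_{n,d}$ is the other projection.

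The main step is verifying $s$ is flat. For a generic line $\ell$, the globally generated bundle $E|_\ell$ on $\PP^1$ has the balanced splitting $\OO_\ell(1)^{\oplus d} \oplus \OO_\ell^{\oplus (r-d)}$ (when $d \leq r$; otherwise $E|_\ell(-1)$ is already globally generated), and the base locus of $|\xi - H|$ on $\pi_1^{-1}(\ell)$ is the sub-projective bundle $\PP(\OO_\ell^{\oplus(r-d)})$ of codimension $d \geq 2$. Over $(p, D) \in X_{n,d}$, a line $\ell$ through $p$ is good if $(p, D)$ lies outside this base locus; good lines form a dense open in the $\PP^{n-1}$ of lines through $p$, and their contribution gives fiber dimension $(n-1)+(d-2) = n + d - 3$, dominating the lower-dimensional bad contribution. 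Miracle flatness, applied to the equidimensional map $s$ between the Cohen--Macaulay $U$ and the smooth $X_{n,d}$, then yields flatness. The cycle class of a generic fiber is $s_*[p^{-1}(w)] = H^{n-1}(\xi - H) = \eta$, confirming that $\eta$ is strongly basepoint free. The hard part will be the careful analysis ensuring fiber equidimensionality (especially over unbalanced splittings of $E|_\ell$); the condition $d \geq 2$ is precisely what ensures the base locus has codimension $\geq 2$ on the generic fiber, so the bad contributions are strictly dominated and the construction can be sharpened (for instance by restricting to the open where $E|_\ell$ is balanced) to yield a flat map.
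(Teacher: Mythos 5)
Your construction is correct in outline but takes a genuinely different route from the paper. The paper works with the smaller family of cycles $Z_{\ell_0,p_0}=\overline{\{(p,D)\,:\, p_0\in D,\ p\in\ell_0\setminus\{p_0\}\}}$ parameterized by the $(2n-1)$-dimensional flag variety of pairs $p_0\in\ell_0$; the key observation there is that the fiber of the total family over a point $(p,D)$ is essentially a copy of $D$ itself (choose $p_0\in D$, and then $\ell_0=\overline{pp_0}$ is determined), so the fiber dimension is constant and miracle flatness applies with no vector bundle analysis at all. Your family is the full linear system $|\xi-H|$ on each $\pi_1^{-1}(\ell)$, of which the paper's cycles form the subfamily of divisors residual to a point of $\ell$. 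What your version buys is the linear structure of the fibers of $s$ (a hyperplane in $\PP^{d-1}$ over each good line); what it costs is having to control the splitting type of $E|_\ell$.

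Two points need tightening before the argument closes. First, the balanced splitting $E|_\ell\cong\OO_\ell(1)^{\oplus d}\oplus\OO_\ell^{\oplus(r-d)}$ holds for \emph{every} line, not just a generic one, and you should prove this rather than fall back on deleting the unbalanced locus: your computation $h^0(E|_\ell(-1))=d$ does not pin down the splitting (any globally generated bundle of rank $r$ and degree $d$ on $\PP^1$ has $h^0$ of its twist by $\OO(-1)$ equal to $d$), but restricting the dual defining sequence to $\ell$ exhibits $E^*|_\ell$ as an extension of $\ker\bigl(H^0(\OO_\ell(d))\otimes\OO_\ell\to\OO_\ell(d)\bigr)\cong\OO_\ell(-1)^{\oplus d}$ by $\OO_\ell^{\oplus(r-d)}$, and this extension splits since $H^1(\PP^1,\OO(1))=0$. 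This matters: your proposed workaround of restricting to the open locus of balanced lines would not obviously preserve constancy of the fiber dimension of $s$, and over an unbalanced line the base locus of $|\xi-H|$ could acquire a divisorial fixed component, which would break the fiber count over points of that component. Second, the bad contribution to the fiber of $s$ over $(p,D)$ is not of strictly lower dimension than the good one: the lines through $p$ contained in $D$ can form a $\PP^{n-2}$ (e.g.\ when $D$ contains a hyperplane through $p$), each carrying a full $\PP^{d-1}$, for a total of $(n-2)+(d-1)=n+d-3$, equal to the good contribution $(n-1)+(d-2)$. That is still fine---miracle flatness only requires the fiber dimension to equal $n+d-3$ everywhere, which it does---but the claim that the bad locus is ``strictly dominated'' should be replaced by this correct bound. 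With these two repairs your proof is complete.
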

\begin{proof}
Given a fixed point $p_0$ on a fixed line $\ell_0$, $\eta$ is represented by the cycle $Z_{\ell_0,p_0}$ defined as the closure of the set of all $(p,D)$ such that $D$ contains $p_0$ and $p \in \ell_0\setminus p$. Letting $B$ be the parameter space of pairs $(\ell_0,p_0)$, we get a corresponding universal cycle $\mathcal{Z}\subset X_{n,d}\times B$.  Any pair $(p,D)$ is contained in a $2n-2$ dimensional set of the $Z_{\ell_0,p_0}$,  so by the miracle flatness theorem \cite[Theorem 23.1]{Mat89} the map $\mathcal{Z}\ra X_{n,d}$ is flat,  and $\eta$ is strongly basepoint free.
\end{proof}
\begin{lemma}\label{443}
If $d\geq 2$, then the class $nH^{n-1}\xi^{n+1}-(n+1)[pt]\xi^{n}$ is nef on $X_{n,d}$.
\end{lemma}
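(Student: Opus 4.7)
I would prove nefness of $\alpha = nH^{n-1}\xi^{n+1}-(n+1)H^n\xi^n$ by combining the strong basepoint-freeness of Lemma \ref{367} with a sortaBPF argument via Proposition \ref{sortaBPF}. The starting point is the algebraic identity
\[
\alpha = n\,\xi^n(H^{n-1}\xi - H^n) - H^n\xi^n.
\]
Since $H^{n-1}\xi - H^n$ is strongly basepoint-free (Lemma \ref{367}), hence nef, and $\xi$ is a nef divisor, iterated application of Proposition \ref{nefDivisorProducts} puts $\xi^n(H^{n-1}\xi - H^n)$ into $\Nef^{2n}(X_{n,d})$. In other words, $\alpha + H^n\xi^n$ is nef, and the crux is showing that the effective correction $H^n\xi^n$ can be absorbed.

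To absorb it, I would construct an effective codimension-$2n$ cycle $W$ whose class allows a decomposition of the form $\alpha = H^n\xi^n + n[W]$ (or, failing that, expresses $\alpha$ as a positive combination of $H^n\xi^n$ and classes $[W_b]$ from a natural family). A natural candidate for the family parameterizes configurations $b=(\ell,p_0,p_1,\dots,p_{n-1})$ of a line $\ell\subset\PP^n$, a marked point $p_0\in\ell$, and $n-1$ general points, with
\[
W_b=\overline{\{(p,D)\in X_{n,d}:\ p_0,p_1,\dots,p_{n-1}\in D,\ D\text{ tangent to }\ell\text{ at }p_0,\ p\in(\ell\cap D)\setminus\{p_0\}\}}
\]
when $d\geq 3$; when $d=2$ the tangency condition forces $\ell\cap D=\{p_0\}$, so one instead replaces it by the reducible-divisor condition $\ell\subset D$. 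A dimension count confirms $\codim W_b = 2n$, and the class $[W_b]$ can be extracted by a direct intersection computation of its components $d_j = [W_b]\cdot H^j\xi^{2n-j}$ (counting divisors through the prescribed points with the prescribed tangency). A check in the smallest case $n=2,d=2$ gives $[W_b]=H\xi^3-2H^2\xi^2$, confirming the decomposition $\alpha = H^2\xi^2 + 2[W_b]$, and the analogous pattern should hold in general.

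With such a decomposition in hand, I would conclude nefness as follows. Given any effective $R$ of complementary dimension, write $\alpha\cdot[R] = H^n\xi^n\cdot[R] + n\,[W_b]\cdot[R]$. The first term is non-negative by nefness of $H^n\xi^n$ (a product of nef divisors). For the second, apply Proposition \ref{sortaBPF} to the family $\{W_b\}_{b\in B}$: if $[W_b]\cdot[R]<0$, the proposition produces a subcycle $R'\subset R$ and an integer $c$ such that through each general $(p,D)\in R'$ the locus of witnessing parameters $b\in B$ has codimension at most $2n-c-1$. The genericity of $p_1,\dots,p_{n-1}$ constrains how large this witnessing locus can be, and I would argue that the resulting dimensional shortfall either contradicts the assumed negativity outright, or is dominated by the positive contribution of $H^n\xi^n\cdot[R]$. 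The main obstacle I expect is making this last trade-off quantitative and uniform in $d$; the case $d=2$ is especially delicate because the family relies on reducible divisors rather than tangency, so the parameter space $B$ and its incidence geometry must be analyzed separately from $d\geq 3$.
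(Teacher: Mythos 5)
Your proposal has a genuine gap at its final step, and it is the step that carries all the weight. Writing $\alpha=nH^{n-1}\xi^{n+1}-(n+1)H^n\xi^n$ as a positive combination of $H^n\xi^n$ and the classes $[W_b]$ of your tangency cycles (for your family one gets $[W_b]=H^{n-1}\xi^{n+1}-2H^n\xi^n$, so the decomposition is $\alpha=(n-1)H^n\xi^n+n[W_b]$, not $H^n\xi^n+n[W_b]$ once $n\geq 3$) only shows that $\alpha$ is \emph{effective}; it says nothing about nefness unless you can bound how negative $[W_b]\cdot[R]$ can be. Proposition \ref{sortaBPF} cannot supply that bound: it is purely qualitative, converting the hypothesis $\eta\cdot[R]<0$ into the existence of a subcycle $R'$ with a large witnessing locus, but it produces no quantitative lower bound on $\eta\cdot[R]$ that could be played off against $H^n\xi^n\cdot[R]$. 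And the needed bound is sharp, not approximate: already on $X_{2,2}$ your class $[W_b]=H\xi^3-2H^2\xi^2$ pairs to $-1$ with the extremal cycle $Z_{4,3}$ (pairs $(p,\ell_1+\ell_2)$ with $p\in\ell_1$, $p_0\in\ell_2$), while $H^2\xi^2\cdot[Z_{4,3}]=2$, so $\alpha\cdot[Z_{4,3}]=0$ exactly. Any ``domination'' argument must therefore be an equality on this locus, and nothing in your sketch produces it; the sentence ``either contradicts the assumed negativity outright, or is dominated by the positive contribution'' is the statement to be proved, not a proof.

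The paper avoids the correction term entirely by choosing a better family. It takes $B$ to be the space of rational normal curves $C\subset\PP^n$ with $n+1$ distinct marked points $p_1,\ldots,p_{n+1}$, and $Z_b=\{(p,D):D\cap C\supseteq p_1+\cdots+p_{n+1}+p\}$; a direct computation gives $[Z_b]=\alpha$ on the nose. The key feature is that marked points of a rational normal curve never all lie in a hyperplane, so the fiber of the incidence variety $\mathcal Z\to X_{n,d}$ over $(p,D)$ is \emph{empty} when $D$ is supported on a hyperplane and of constant dimension otherwise. Empty fibers are harmless for nefness (a general $Z_b$ simply misses such loci), so a general member meets every subvariety properly or not at all, and $\alpha$ is nef with no residual term to absorb. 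The structural lesson is that for a class of the form $aH^{n-1}\xi^{n+1}-bH^n\xi^n$ you want a representing family with no jumping locus at all, rather than a family that jumps (as your tangency family does over divisors containing the line) together with a compensating effective class.
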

\begin{proof}
Let $B$ be the parameter space of rational normal curves $C$ in $\PP^n$ with $n+1$ distinct marked points $p_1,\ldots,p_{n+1}$.  Let $\mathcal{Z}\subset X_{n,d}\times B$ consist of pairs of pairs $((p,D),(C,(p_1,\ldots,p_{n+1})))$ such that $D\cap C$ contains $p_1+\ldots+p_{n+1}+p$. 
Through any $n+1$ distinct points not all lying in a hyperplane of $\PP^n$ there is a family of rational normal curves of dimension exactly $n$.  As a result,  the fiber dimension of $\mathcal{Z}$ over any pair $(D,p)\in X_{n,d}$ is $n^2+n-1$  unless $D$ is supported on a hyperplane,  in which case the fiber is empty.  So,  given any closed subvariety $W\subset X_{n,d}$, if  $b\in B$ is chosen general,  then $Z_b\cap W$ either has the expected dimension or is empty---the latter if and only if $W$ consists entirely of pairs with $D$ supported on a hyperplane.  So $nH^{n-1}\xi^{n+1}-(n+1)[pt]\xi^{n}$  is nef.
\end{proof}
\begin{proposition}\label{slightlyHigherDim}
Assume $n\geq 2$.
\[\Psef_i(X_{n,2})=	\left\{
		\begin{array}{ll}
			\langle e_0,\ldots,e_{n-1}, e_{n}+e_{n-1}\rangle\hfill  \mbox{if }n \leq i \leq 2n-1\\
			\langle e_0,\ldots,e_{n-2}, ne_n+(n+1)e_{n-1}, e_{n-1}+e_{n-2}, e_n+(n+1)e_{n-1} \rangle  \mbox{if } i=2n 
		\end{array}
	\right.
	\]
\end{proposition}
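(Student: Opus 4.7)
The plan is to verify the two containments $\supseteq$ and $\subseteq$ separately for the two cases.

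For the $\supseteq$ direction, I would realize each generator by an explicit effective cycle. For $0 \leq k \leq n-1$ (or $k \leq n-2$ in the $i = 2n$ case), Lemma \ref{projectiveSpaceBPFCycles} supplies $e_k$ as a basepoint free cycle in $N_{n-1}(X_{n,2})$, which can then be propagated to higher dimension $i$ either by varying the complete intersection in a family or by pushing forward effective classes on $X_{n,1}$ (from Corollary \ref{456}) via $\mu : X_{n,1} \times Y_{n,1} \to X_{n,2}$, using Proposition \ref{productFormula}. The class $e_n + e_{n-1}$ (extremal only for $n \le i \le 2n-1$) is realized by the reducible-quadric cycle $Z = \{(p, H_1 \cdot H_2) : H_1 \text{ fixed hyperplane},\ H_2 \in (\PP^n)^*,\ p \in H_2\}$, whose covariant form one computes directly; multiplication by appropriate powers of $\xi$ then adjusts the dimension while preserving the covariant form (by the remark following Proposition \ref{productFormula}, valid because the intermediate dimensions all exceed $n$). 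In the $i = 2n$ case, the three additional generators $B$, $A$, $C$ require more specialized constructions: I would expect $B = e_{n-1} + e_{n-2}$ to arise as a higher-codimensional variant of Lemma \ref{367}'s cycle $Z_{\ell_0, p_0}$, with extra linear incidence conditions cutting the dimension down to $2n$, while $A$ and $C$ come from suitable subfamilies of Lemma \ref{443}'s rational-normal-curve construction, chosen so that the covariant components fall into the prescribed ratios $n : n+1$ and $1 : n+1$ dictated by the pairing with Lemma \ref{443}'s nef cocycle.

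For the $\subseteq$ direction, I would exhibit a nef class dual to each facet of the claimed cone. In the simplicial case ($n \leq i \leq 2n-1$), the $n+1$ facets are dual to $H^k\xi^{i-k}$ for $k \in \{0, \ldots, n\} \setminus \{n-1\}$, all nef as products of the nef divisors $H$ and $\xi$ via Proposition \ref{nefDivisorProducts}, together with $(H^{n-1}\xi - H^n)\xi^{i-n}$, nef by Lemma \ref{367} and Proposition \ref{nefDivisorProducts}. A short calculation of pairings confirms these are the correct duals. In the non-simplicial case $i = 2n$, the $n+2$ facets include those handled by the products $H^k\xi^{2n-k}$ for $k \leq n-2$, the facet dual to Lemma \ref{443}'s class $nH^{n-1}\xi^{n+1} - (n+1)H^n\xi^n$ (which vanishes exactly on $A$), and remaining facets whose duals I expect to realize as positive combinations of $(H^{n-1}\xi - H^n)\xi^n$ and Lemma \ref{443}'s class, possibly multiplied by further powers of $H$ or $\xi$.

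The main obstacle will be the construction of effective representatives for $A$, $B$, and $C$ in the $i = 2n$ case, especially for large $n$: a direct fiber of Lemma \ref{443}'s family has dimension $r - n - 1$, which agrees with $2n$ only for small $n$, so naive subfamily constructions do not immediately produce cycles of the right dimension with the prescribed covariant form. Producing such cycles will likely require combining reducible quadric decompositions with complete-intersection constructions and carefully verifying covariant forms via intersection theory on $X_{n,2}$; failing that, one can apply the refined positivity framework of Definition \ref{bpfdef} and Proposition \ref{2.4} to obtain the required effective classes from almost-basepoint-free families supported off controllable loci.
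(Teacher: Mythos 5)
Your treatment of the range $n\leq i\leq 2n-1$ matches the paper's: the lower bound comes from pushing forward $\Psef_i(X_{n,1}\times Y_{n,1})$ under $\mu$ (equivalently your reducible-quadric cycle, which is $\mu(X_{n,1}\times\{H_1\})$ cut by powers of $\xi$), and the upper bound is dual to the nef classes $H^k\xi^{i-k}$ for $k\neq n-1$ together with $(H^{n-1}\xi-H^n)\xi^{i-n}$. That part is sound.

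The case $i=2n$ contains a genuine gap, and it is exactly where you flag uncertainty. Your plan is to ``exhibit a nef class dual to each facet,'' but the claimed cone is \emph{not} cut out by nef classes that the available lemmas produce. Computing the dual of $\langle e_0,\ldots,e_{n-2}, ne_n+(n+1)e_{n-1}, e_{n-1}+e_{n-2}, e_n+(n+1)e_{n-1}\rangle$, one of its extremal rays is $\psi=H^{n-2}\xi^{n+2}-H^{n-1}\xi^{n+1}+(n+1)H^n\xi^n$; since the $H^{n-2}\xi^{n+2}$-coefficient can only come from $H^{n-2}\xi^{n+2}$ itself, and no nonnegative combination of $(H^{n-1}\xi-H^n)\xi^n$, $nH^{n-1}\xi^{n+1}-(n+1)H^n\xi^n$ and $H^n\xi^n$ has negative $H^{n-1}\xi^{n+1}$-coefficient, $\psi$ is not a positive combination of the nef classes you propose. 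The paper's route is different in kind: it uses $\eta\xi^{n-1}$ with $\eta=H^{n-2}\xi^{3}-H^{n-1}\xi^{2}+H^n\xi$ (quadrics containing a fixed line $\ell$ and marked elsewhere on a fixed $2$-plane through $\ell$), which is \emph{not} nef---it pairs to $-n$ with the effective class $e_n+(n+1)e_{n-1}$---but by Proposition \ref{sortaBPF} is nonnegative on every irreducible $2n$-cycle not supported on $\mu(X_{n,1}\times Y_{n,1})$, because only quadrics containing a hyperplane carry an excess family of lines. This yields the decomposition $\Eff_{2n}(X_{n,2})=\mu_*(\Eff_{2n}(X_{n,1}\times Y_{n,1}))+\{\beta\mid\beta\cdot\eta\xi^{n-1}\geq 0\}$, and the two pieces are bounded separately (the second by the dual of $\langle\eta\xi^{n-1},\,nH^{n-1}\xi^{n+1}-(n+1)H^n\xi^n,\,\xi^{2n},\ldots,H^n\xi^n\rangle$). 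Without this ``nef away from a controlled locus plus product-map decomposition'' step, your upper bound cannot close. A secondary error: the generators $ne_n+(n+1)e_{n-1}$ and $e_n+(n+1)e_{n-1}$ do not come from subfamilies of the rational-normal-curve construction of Lemma \ref{443} (that lemma produces a nef cocycle, not effective cycles); they are simply $\mu_*\bigl((e_n+e_{n-1})\times[\Lambda]\bigr)$ for $\Lambda$ a line and $\Lambda=Y_{n,1}$ respectively, read off from Proposition \ref{productFormula}.
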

\begin{proof}
If $n\leq i\leq 2n-1$, then by the product formula and Corollary \ref{456} we have
 $\mu_*\Psef_i(X_{n,1}\times Y_{n,1})\subset\Psef_i(X_{n,2})$ is given by 
\[
\mu_*\Psef_i(X_{n,1}\times Y_{n,1})=\langle e_0,\ldots,e_{n-1}, e_n+e_{n-1}\rangle,
\]
where the cycle $e_n+e_{n-1}$ comes from the pushforward of the cycle $\xi^{2n-1-i}\in N_i(X_{n,1})$. (See Proposition \ref{1products}).  So we easily have
\[
\Psef_i(X_{n,2})\supseteq \langle e_0,\ldots,e_{n-1}, e_n+e_{n-1}\rangle.
\]
Since the divisors $\xi$ and $H$ are nef,  as is the cycle $H^{n-1}\xi-H^n$ by Lemma \ref{367},  we have
\[
\Nef^i(X_{n,2})\supseteq \langle H^n\xi^{i-n},H^{n-2}\xi^{i-n+2}, H^{n-3}\xi^{i-n+3},\ldots,\xi^i, H^{n-1}\xi^{i-n+1}-H^n\xi^{i-n}\rangle \subset N^i(X_{n,2}).
\]
Dualizing this containment gives
\[
\Psef_i(X_{n,2})\subseteq\langle e_0,\ldots,e_{n-1}, e_n+e_{n-1}\rangle.
\]
If $i=2n$,  by Proposition \ref{lowDim} and the product formula we have
\begin{align*}
\mu_*\Psef_{2n}(X_{n,1}\times Y_{n,1})=\langle & e_n+(n+1)e_{n-1}, ne_n+(n+1)e_{n-1}, \\ &2e_{n-1}+(n+2)e_{n-2}, ne_{n-1}+(n+2)e_{n-2},\ldots,ne_1+2ne_0\rangle.
\end{align*}
$\Psef_{2n}(X_{n,2})$ also contains the cycles $e_i$ with $i\leq n-2$. Indeed, for each $i<n-1$, $e_i$ is the class in $N^{\binom{i+2}{2}+n-i-1}(X_{n,2})$ of $Z$, where $Z$ is given by fixing an $i$-plane $\Lambda$ and setting
\[
Z=\{(p,D)\vert \Lambda\subset D, p\in \Lambda\}.
\] 
Z has dimension $\binom{n+2}{2}+n-2- (\binom{i+2}{2}+n-i-1)$. Since the inequality
\[
\binom{n+2}{2}+n-2- (\binom{i+2}{2}+n-i-1) \geq 2n
\]
holds for $i\leq n-2$, we hence have that $e_i\in \Eff_{2n}(X_{n,2})$ for $i\leq n-2$, and is represented by $\xi^{\dim(Z)-2n}[Z]$.  . So we have the containment 
\[
\Psef_{2n}(X_{n,2})\supseteq \langle e_0,\ldots,e_{n-2}, ne_n+(n+1)e_{n-1}, e_{n-1}+e_{n-2}, e_n+(n+1)e_{n-1} \rangle. 
\]
For the other direction, we require some positive $(2n)$-cocycles on $X_H$. The complete intersection cycles $\xi^{2n},\ldots, H^n\xi^n$ are all nef.  Set $\eta=H^{n-2}\xi^3-H^{n-1}\xi^2+H^n\xi$.  $\eta$ represents pairs $(p,D)$ such that $D$ contains some fixed line $\ell$ and is marked elsewhere on some fixed 2-plane $\Lambda$ containing $\ell$.  A typical quadric contains a $2n-5$ dimensional space of lines, and the only quadrics containing a larger space of lines are those containing a hyperplane; those instead contain a $2n-4$ dimensional space of lines. By Proposition \ref{sortaBPF}, if $R$ is any irreducible dimension $n+1$ subvariety of $X_{n,2}$ not supported on $\mu(X_{n,1}\times Y_{n,1})$, then $R\cdot \eta\geq  0$.  In particular, 
\[
\Eff_{2n}(X)=\mu_*(\Eff_{2n}(X_{n,1}\times Y_{n,1}))+\{\beta \in \Eff_{2n}(X_{n,2})\vert \beta \cdot \eta\geq 0\}.
\]
Finally, by Lemma \ref{443} the cycle $\eta_2=nH^{n-1}\xi^{n+1}-(n+1)H^n\xi^n$ is nef.  The dual cone of the cone  $\Delta=\langle \eta\xi^{n-1}, \eta_2, \xi^{2n},\ldots,\xi^nH^n\rangle\subset N^{2n}(X_{n,2})$ is the cone
 \[
  \Delta^*= \langle ne_n+(n+1)e_{n-1}+e_{n-2}, e_{n-1}+e_{n-2}, e_{n-2},e_{n-3},\ldots,e_0\rangle.
 \]
We have $\Psef_{2n}(X_{n,2})\subseteq \Delta^*+\mu_*(\Eff_{2n}(X_{n,1}\times Y_{n,1}))$,  by the nefness of every cycle in $\Delta$ except $\eta\xi^{n-1}$. So we have
\[
\Psef_{2n}(X_{2,n})\subseteq \langle e_0,\ldots,e_{n-2}, ne_n+(n+1)e_{n-1}, e_{n-1}+e_{n-2}, e_n+(n+1)e_{n-1} \rangle,
\] 
giving the result.
\end{proof}
This result and Proposition \ref{lowDim} are sufficient to give every effective cycle  on $X_{2,2}$.%no longer true
\begin{corollary}[Theorem \ref{conicCone}]\label{conicCorollary}
The nontrivial pseudoeffective cones of $d$-cycles on $X_{2,2}$ are spanned by effective cycles and given by the following.
\begin{align*}
\Psef_1(X_{2,2}) &= \langle (0,1,0), (0,0,1)\rangle\\
\Psef_2(X_{2,2}) &= \langle (1,1,0), (0,1,0), (0,0,1)\rangle\\
\Psef_3(X_{2,2}) &=\langle (1,1,0), (0,1,0), (0,0,1).\rangle\\
\Psef_4(X_{2,2})&= \langle (2,3,0), (1,3,0), (0,1,1), (0,0,1)\rangle\\
\Psef_5(X_{2,2})&=\langle(1,2,0), (0,1,2)\rangle.
\end{align*}
\end{corollary}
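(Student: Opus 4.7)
The plan is to deduce the corollary from Propositions~\ref{lowDim} and \ref{slightlyHigherDim} by specializing $n = d = 2$, handling the divisor case $i=5$ directly, and then verifying extremality of each listed generator. The bulk of the work is notational: translating the abstract generators $e_j$, $n e_n + (n+1)e_{n-1}$, etc.\ into the explicit tuples $(d_2,d_1,d_0)$ in the statement.

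For $i=1<n$, Proposition~\ref{lowDim} applies and yields $\Psef_1(X_{2,2}) = \langle e_0,e_1\rangle = \langle (0,0,1),(0,1,0)\rangle$. For $i\in\{2,3\}$, the first clause of Proposition~\ref{slightlyHigherDim} (the range $n\le i\le 2n-1$) gives $\langle e_0,e_1,e_2+e_1\rangle = \langle (0,0,1),(0,1,0),(1,1,0)\rangle$. For $i=4=2n$, substituting $n=2$ in the second clause of the same proposition produces the four generators $e_0$, $2e_2+3e_1$, $e_1+e_0$, $e_2+3e_1$, i.e.\ $(0,0,1),(2,3,0),(0,1,1),(1,3,0)$. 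For $i=5$, Proposition~\ref{lowDim} applied in codimension one gives $\Nef^1(X_{2,2})=\Psef^1(X_{2,2})= \langle \xi, H\rangle$, and writing $\xi$ and $H$ in tuple coordinates completes the list.

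What remains is to confirm that every listed generator spans an extremal ray, so that the cones are as presented without redundancy. For $i\in\{1,2,3,5\}$ the cones are simplicial---the number of generators equals the rank of $N_i(X_{2,2})$---so extremality reduces to the linear independence of the coordinate tuples, which is immediate. The substantive case is $i=4$, where four rays generate a three-dimensional cone. The principal obstacle is to rule out that any one of the four generators lies in the nonnegative span of the other three. I would handle this by direct linear algebra: for each candidate generator, solve the unique $3\times 3$ linear system expressing it as a combination of the others and verify that at least one coefficient is forced to be negative. (Equivalently, one can exhibit, for each extremal ray, a nef $4$-cocycle supporting it: $H^2\xi^2$ and $\xi^4$ cut out the two ``coordinate'' facets $\{d_2=0\}$ and $\{d_0=0\}$, the nef class $2H\xi^3-3H^2\xi^2$ from Lemma~\ref{443} cuts out the facet through $(0,0,1)$ and $(2,3,0)$, and the remaining facet is supported by a suitable nonnegative combination of these.) Either way, all four rays are extremal, so $\Psef_4(X_{2,2})$ is a non-simplicial quadrilateral cone, as asserted.
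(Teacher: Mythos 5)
Your argument is correct and is essentially the paper's own: the corollary is obtained there by exactly this specialization of Propositions~\ref{lowDim} and \ref{slightlyHigherDim} to $n=d=2$ (together with the tuple computations $\xi=(1,2,0)$, $H=(0,1,2)$, where the factors of $2$ come from the relation $\xi^5-2H\xi^4+4H^2\xi^3=0$), and the extremality verification you append belongs to Theorem~\ref{conicCone} rather than to the corollary itself. One caveat on your parenthetical alternative for $i=4$: the facet through $(1,3,0)$ and $(0,1,1)$ is supported by the functional $3d_2-d_1+d_0$, i.e.\ by $3H^2\xi^2-H\xi^3+\xi^4$, which is \emph{not} a nonnegative combination of $H^2\xi^2$, $\xi^4$, and $2H\xi^3-3H^2\xi^2$ (the coefficient of $H\xi^3$ is forced to be negative), so of the two routes you offer only the direct linear-algebra check goes through as written.
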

\begin{remark}
In the notation of Theorem \ref{conicCone}, we have $[Z_{4,1}]=(0,1,1)$, $[Z_{4,2}]=(1,3,0)$, $[Z_{4,3}]=(2,3,0)$, $[Z_{3,1}]=(0,1,0)$, and $[Z_{3,2}]=(1,1,0)$. So Corollary \ref{conicCorollary} and Theorem \ref{conicCone} are equivalent.
\end{remark}

\subsection{Universal plane curves}
We now restrict our attention to $X_{2,d}$, with the eventual goal of proving Theorem \ref{stableCases}.
That theorem is an immediate consequence of the following.
\begin{theorem}\label{stableCasesPrecise}
Suppose $d\geq 3$ and $2\leq i\leq 6$ or $i\in\{7,10\}$ and $d\geq 4$. 
Then we have
\[
\Psef_i(X_{2,d})=\langle (1,\delta(i),0), (0,1,0), (0,0,1)\rangle
\]
where $\delta(i)$ is given by the following table.
\begin{center}
\begin{tabular}{c|c|c|c|c|c|c|c}
     i&2&3&4&5&6&7&10  \\
     \hline
     $\delta(i)$&1&1&1.5&2&2&2.4 &3
\end{tabular}
\end{center}
\end{theorem}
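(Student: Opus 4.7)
The plan is to prove $\Psef_i(X_{2,d}) = \langle (1, \delta(i), 0), (0, 1, 0), (0, 0, 1)\rangle$ by establishing both containments. The inclusion $\supseteq$ is constructive, using pushforwards through product maps and Proposition~\ref{productFormula}; the inclusion $\subseteq$ reduces to one nontrivial inequality beyond the obvious $a, c \geq 0$.

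\emph{Lower bound.} The rays $(0, 0, 1)$ and $(0, 1, 0)$ are realized by standard effective cycles: a linear subspace of a fiber of $\pi_1$ for the former, and the construction of Lemma~\ref{315} applied to $\beta = H \in \Psef_1(\PP^2)$ for the latter. The key ray $(1, \delta(i), 0)$ is produced by pushing forward via $\mu: X_{2,k} \times Y_{2,d-k} \to X_{2,d}$ for a suitable $k \leq d$. For $i \leq 5$, we push forward an extremal cycle of $\Psef_i(X_{2,2})$ from Corollary~\ref{conicCorollary} with $t = 0$, so the class is preserved up to scale. For $i = 6$, we compute $[X_{2,2}] = (1, 2, 0)$ via the bundle relation $\xi^5 = 2H\xi^4 - 4H^2\xi^3$ on $X_{2,2}$, and push it forward. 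For $i = 7$ (requiring $d \geq 4$ so that $\dim Y_{2,d-2} \geq 1$), we push forward $[X_{2,2}] \times [\text{line}]$ to obtain $(5, 12, 0)$. For $i = 10$ (requiring $d \geq 4$), we push forward $[X_{2,3}]$, computed similarly as $(1, 3, 0)$. In every case the product formula produces exactly the stated ratio.

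\emph{Upper bound.} An effective class $(a, b, c)$ automatically satisfies $a, c \geq 0$ since $H^2\xi^{i-2}$ and $\xi^i$ are products of nef divisors, hence nef. The crux is the inequality $b \geq \delta(i)\, a$, equivalent to showing that the cocycle $w_i := H\xi^{i-1} - \delta(i)\, H^2\xi^{i-2}$ has non-negative pairing with every effective $i$-cycle. We establish this using the framework of Proposition~\ref{sortaBPF}: for each $(i, d)$ in the stated range, we construct an incidence family $\mathcal{Z} \to B$ whose general member represents (an integer multiple of) $w_i$ modulo effective cycles supported on the image of some product map $\mu_k: X_{2,k} \times Y_{2,d-k} \to X_{2,d}$ with $k < d$, and which sweeps out $X_{2,d}$ in codimension $i$ away from that image. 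An effective cycle $\eta$ not supported in any such product image meets a generic $\mathcal{Z}_b$ in the expected codimension, yielding $w_i \cdot \eta \geq 0$. For $\eta$ supported in a product image, we invoke Lemma~\ref{finiteMaps} and the product formula to reduce to $\Psef_s(X_{2,k})$ with $k < d$, which is handled by Corollary~\ref{conicCorollary} (when $k = 2$) or by induction on $d$.

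\emph{Main obstacle.} The hardest step is establishing $b \geq \delta(i)\, a$. Since $w_i$ is not literally nef—it pairs negatively with classes that happen not to be effective—no abstract positivity theorem gives the inequality directly; the near-basepoint-free incidence family representing $w_i$ must be exhibited explicitly for each $i$, and its exceptional locus must be shown to lie inside a product image we can analyze inductively. That $\delta(i)$ is precisely the minimum ratio achievable among the pushforward constructions above is the theorem's central content, and the case-by-case nature of these constructions—together with the appearance of genuinely rational values such as $\delta(4) = 3/2$ and $\delta(7) = 12/5$—is why the result is stated for a finite list of $(i, d)$ rather than offered as a uniform theorem.
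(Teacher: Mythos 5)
Your proposal is correct and follows essentially the same route as the paper: the lower bound via pushforwards of product cones $X_{2,k}\times Y_{2,d-k}$ under the product formula (Propositions \ref{1products} and \ref{2Products}), and the upper bound via the nef classes $\xi^i$, $H^2\xi^{i-2}$ together with incidence classes of the form $eH\xi^{i-1}-(c-1)H^2\xi^{i-2}$ that are basepoint free, or basepoint free away from a product locus whose effective cone is then controlled by the product formula (Lemma \ref{almostNef} and Proposition \ref{sortaBPF}). The only caveat is a minor imprecision: the reason $i=7$ requires $d\geq 4$ is the effectivity of $(0,1,0)$, which by Proposition \ref{010} needs codimension at least $d+1$, not the dimension of $Y_{2,d-2}$.
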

But before proving this theorem, we need to produce a handful of positive cycles on $X_{2,d}$.

Let $1\leq e<d$, and let $M=\binom{e+2}{2}$. For $c\leq M$, let $\eta_{e,c}\in N^c(X_{2,d})$ be the numerical class $eH\xi^{c-1}-(c-1)H^2\xi^{c-2}$. This class is effective. If $(e,d,c)\neq (1,2,3)$,  given a irreducible degree $e$ curve $C_0$ and a degree $c-1$ divisor $D$ on $C_0$, this class is represented by the codimension $c$ set $Z_{C_0, D}$ of pairs $(C,p)$ where $C\cap C_0$ contains $D+p$.  And if $(e,d)=(1,2)$ $\eta_{1,3}$ instead is represented by the three-cycle of conics containing and marked on a fixed line,  and is hence still effective. We also have the following.

\begin{lemma}\label{almostNef}
If $c<M$, then $\eta_{e,c}$ is strongly basepoint free on $X_{2,d}$. If $c=M$ and $Z$ is any cycle of dimension at least $M$ not supported on $\mu(X_m\times Y_{n-m})$, then $\eta_{e,c}\cdot Z$ is a pseudoeffective class.
\end{lemma}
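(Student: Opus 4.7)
The plan is to derive both statements from a single incidence construction by dimension counting, using miracle flatness for the first and Proposition~\ref{2.4} for the second. Set
\[
W := \{(C_0, D) : C_0 \in |\OO_{\PP^2}(e)|,\, D \in \sym^{c-1}(C_0)\},
\]
an irreducible variety of dimension $M + c - 2$, and let $U \subset X_{2,d} \times W$ be the closed incidence defined by the conditions $p \in C_0$ and $D \subset C$. Denote the two projections by $q \colon U \to W$ and $s \colon U \to X_{2,d}$. The generic fiber $q^{-1}(C_0, D)$ equals $Z_{C_0, D}$ and represents $\eta_{e,c}$, so $q$ is projective of relative dimension $\dim X_{2,d} - c$. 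Moreover, $U$ is cut out by $1 + (c - 1) = c$ equations in the smooth ambient $X_{2,d} \times W$, so it is a local complete intersection of the expected codimension, hence Cohen--Macaulay, with $\dim U = \dim X_{2,d} + (M - 2)$.

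For the first part I would deduce that $s$ is flat by miracle flatness (Matsumura, Theorem~23.1): since $X_{2,d}$ is smooth and $U$ is Cohen--Macaulay, it suffices to show every scheme-theoretic fiber $V = s^{-1}(p,C)$ is equidimensional of dimension $M - 2$. The LCI structure already forces $\dim V \geq M - 2$ on every component, so only a set-theoretic upper bound requires work. I split $V$ by whether $C_0 \subset C$. On the ``transverse'' locus, the degree-$e$ curves through $p$ vary in dimension $M - 2$ and $C \cap C_0$ is a $0$-cycle of degree $ed \geq c$ (using $d > e$ and $c \leq M - 1$), contributing only finitely many subdivisors $D$; on the ``containing'' locus, unique factorization on $\PP^2$ leaves only finitely many degree-$e$ subcurves of $C$ through $p$, each carrying a $\sym^{c-1}$ worth of $D$'s of dimension $c - 1$, which is at most $M - 2$ since $c \leq M - 1$. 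Thus $\dim V = M - 2$ equidimensionally, so $s$ is flat and $\eta_{e,c}$ is strongly basepoint free.

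For the second part ($c = M$) the same $U$ is used, but now the ``containing'' locus contributes dimension $c - 1 = M - 1$, so the fibers of $s$ jump exactly over $B^\ast := \mu(X_{2,e} \times Y_{2, d-e}) \subset X_{2, d}$, the locus of pairs $(p, C)$ where $C$ has a degree-$e$ subcurve through $p$. For an irreducible $Z \subset X_{2, d}$ of codimension $c'$ with $Z \not\subset B^\ast$, the open dense $Z \setminus B^\ast$ contributes $s$-fibers of dimension $M - 2$, while $Z \cap B^\ast$, of $Z$-dimension at most $\dim Z - 1$, contributes at most $(\dim Z - 1) + (M - 1) = \dim Z + M - 2$ to $s^{-1}(Z)$. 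Hence $s^{-1}(Z)$ has dimension $\dim Z + (M - 2)$, i.e., codimension $c'$ in $U$, and Proposition~\ref{2.4} applied with $B = Z$ and $\eta = \eta_{e, M}$ shows $Z \cdot \eta_{e, M}$ is effective, hence pseudoeffective; the general cycle statement follows by linearity. The main subtlety is the finiteness of degree-$e$ subcurves of $C$ through $p$, which rests on unique factorization in $k[x, y, z]$ rather than any general positivity argument.
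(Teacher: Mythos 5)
Your overall strategy is the same as the paper's: parametrize the cycles $Z_{C_0,D}$ by an incidence family, get flatness of $s$ from miracle flatness via an equidimensionality count on fibers when $c<M$, and when $c=M$ identify the fiber-dimension jump locus as $\mu(X_{2,e}\times Y_{2,d-e})$ and invoke Proposition \ref{2.4}. The difference is that you work over the \emph{entire} compactified parameter space $W=\{(C_0,D): C_0\in|\OO_{\PP^2}(e)|,\ D\in\sym^{c-1}(C_0)\}$, and this is where the genuine gaps sit. First, $\sym^{c-1}(C_0)$ is not a well-behaved (or even clearly defined) object when $C_0$ is non-reduced, and the relative symmetric product of the universal curve over $|\OO_{\PP^2}(e)|$ degenerates over the singular locus; so $W$ itself needs justification. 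Second, your Cohen--Macaulayness of $U$ rests on the assertion that $U$ is cut out by exactly $c=1+(c-1)$ local equations of the expected codimension; the count ``containing $D$ is $c-1$ conditions'' is only transparent where $C_0$ is smooth and $D$ is reduced and in general position, and it is precisely the CM hypothesis that makes miracle flatness run, so this cannot be waved through. Relatedly, Proposition \ref{2.4} requires every component of $U$ to dominate $W$ with fibers of constant relative dimension, and your compactified $U$ may acquire extra components (or fiber-dimension jumps for $q$) over the boundary of $W$ where $D$ fails to impose independent conditions. The paper's proof avoids all of this by shrinking the base from the outset to pairs $(\Gamma,C_0)$ with $C_0$ smooth and $\Gamma$ reduced, imposing independent conditions and in general position, so that the total space is a projective bundle over a smooth family of curves and is manifestly smooth; nothing is lost because strong basepoint freeness only requires \emph{some} such family. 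That is also the repair I would suggest for your write-up.

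A second, smaller gap: your transversality count uses $ed\geq c$, which fails in the exceptional case $(e,d,c)=(1,2,3)$ (two points plus a marked point on a line force a conic to contain the line, so the ``transverse'' locus is empty and $s$ is not dominant from your family). The paper treats this case separately, representing $\eta_{1,3}$ by the $3$-cycle of conics containing and marked on a fixed line and arguing positivity off $\mu(X_{2,1}\times Y_{2,1})$ directly. Your argument as written silently excludes this case; you should either exclude it explicitly and handle it by hand, or note where it is actually needed downstream.
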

\begin{proof}

Suppose $c\leq M$ and $(m,n)\neq (1,2)$.
Let $U$ consist of pairs $(\Gamma, C_0)$ where $\Gamma$ is a collection of $c-1$ distinct points imposing independent conditions on degree $m$ curves, and $C_0$ is a smooth curve of degree $m$ containing $\Gamma$ such that the tangent line through any $q\in \Gamma$ does not contain any other point in $\Gamma$.
Then given any point $p\in C_0$, we have that the length $c$ scheme $\Gamma+p$ supported on $C_0$ imposes independent conditions on degree $m+1$ curves in $\PP^2$; the union of a general line through some $q\in \Gamma$ and a general degree $m$ curve through $\Gamma\setminus \{q\}$ will not contain $p$. Since $n\geq m+1$, we also have that the scheme $\Gamma+p$ imposes independent conditions on degree $n$ curves.

Let $W$ be the set of pairs of pairs $((\Gamma,C_0), (C,p))\subset U\times X_n$ such that the scheme-theoretic intersection $C\cap C_0$ contains $p+\Gamma$.
Since the length $c$ scheme $\Gamma+p$ imposes independent conditions on degree $m+1$ curves, $W$ is a projective bundle over the space of triples $(p, \Gamma, C_0)$. This space of triples is in turn a smooth family of projective curves over $U$, so the overall projection $p:W\ra U$ is a smooth projective morphism, and hence $W$ is smooth. 

Now we consider the projection $s:W\ra X_n$.
If $c<M$, then $s$ has fibers of constant dimension $M-1$, and is hence flat by \cite[Theorem 23.1]{Mat89} since both $W$ and $X_n$ are smooth.
So $\eta_{m,c}$ is strongly basepoint free.
If $c=M$, then the fiber dimension of $s$ jumps (by 1) over curves containing a smooth degree $m$ component.
Suppose $Z\subset X_n$ is an irreducible subvariety of $X_n$ not supported on the image of $X_m\times Y_{n-m}$.
If $s^{-1}(Z)$ is nonempty, then it is equidimensional of dimension $c+M-1$, since the dimension of $Z\cap \mu(X_m\times Y_{n-m})$ is at most $c-1$, and the fiber dimension jumps by 1 over that locus.
Then, since $s(p^{-1}(u))$ has numerical class $\eta_{c,M}$ for any $u\in U$, the cycle $B\cdot \eta_{c,M}$ is effective by Proposition \ref{2.4}. 

Finally, if $m=1$, $n=2$ and $c=3$, we have that $H\xi^2-2H^2\xi\in N_3(X_2)=(0,1,0)$ is represented by the three-cycle of singular conics containing and marked on a fixed line.  This cycle is strongly basepoint free on the hypersurface $X_{2,1}\times Y_{2,1}\subset X_{2,2}$, and hence intersets all cycles not supported on $X_{2,1}\times Y_{2,1}$ positively.
\end{proof}
\begin{remark}
It is plausible that $\eta_{c,M}$ retains some positivity for $c>M$.
In particular, by the same argument as the proof, for any $c\leq mn$, $\eta_{c,M}$ will be positive on cycles of dimension at least $c$ that are dimensionally transverse to $\mu(X_m\times Y_{n-m})$. 
\end{remark}

We now can prove Theorem \ref{stableCasesPrecise}, and hence Theorem \ref{stableCases}.
\begin{proof}[Proof of Theorem \ref{stableCasesPrecise}]
Under the hypotheses of the theorem, except in the case $i=10$, by Proposition \ref{2Products} the cone $\langle (1,\delta(i),0), (0,1,0), (0,0,1)\rangle \subset N_i(X_{n,d})$ is exactly the pushforward of $\Psef_i(X_{2,2}\times Y_{2,d-2})$ under the product map, so it suffices to verify that every effective cycle is contained in that cone.  The nefness of the cycles below follow from Lemmas \ref{easiestNefClasses},  \ref{443}, and \ref{almostNef}.
\begin{itemize}
    \item If $i=2$, the cycles $\xi^2$, $H\xi-H^2$, and $H^2$ are nef, so
    \[
    \Psef_2(X_{2,d})\subseteq \langle (1,1,0), (0,1,0),(0,0,1)\rangle.
    \]
    \item If $i=3$, the cycles $\xi^3,$ $H\xi^2-H^2\xi$, and $H^2\xi$ are nef, so
    \[
    \Psef_3(X_{2,d})\subseteq \langle(1,1,0),(0,1,0),(0,0,1)\rangle.
    \]
    \item If $i=4$, the cycles $\xi^4$, $H^2\xi^2$ are nef, while the cycle $H\xi^3-2H^2\xi^2$ is nef on cycles not supported on the image of $X_{2,1}\times Y_{2,d-1}$, so by Lemma \ref{1products} we have
    \[
    \Psef_4(X_{2,d})\subseteq \langle (2,3,0), (0,1,0), (0,0,1)\rangle.
    \]
    \item If $i=5$, the cycles $\xi^5$, $H^2\xi^3$, and $2H\xi^4-4H^2\xi^3$ are all nef, so
    \[
    \Psef_5(X_{2,d})\subseteq \langle (1,2,0), (0,1,0), (0,0,1)\rangle.
    \]
    \item If $i=6$, the cycles $\xi^6$, $H^2\xi^4$, and $2H\xi^5-4H^2\xi^4$ are all nef, so
    \[
        \Psef_6(X_{2,d})\subseteq \langle (1,2,0), (0,1,0), (0,0,1)\rangle.
        \]
    \item If $i=7$, the cycles $\xi^7$, $H^2\xi^5$ are nef, while $2H\xi^6-5H^2\xi^5$ is positive on cycles not supported on the image of $X_2\times Y_{n-2}$. so by Proposition \ref{2Products} we have
    \[
\Psef_7(X_{2,d})\subseteq \langle (5,12,0), (0,1,0), (0,0,1)\rangle.
    \]
\end{itemize}
In the case $i=10$, we instead need to push forward from $X_{2,3}\times Y_{2,d-3}$ to find the effective cycles.  By the $i=5$ case, we have $\Psef_5(X_{2,3})=\langle (1,2,0),(0,1,0),(0,1,0)\rangle$. Pushing forward to $\Psef_{10}(X_{2,d})$ using the product map, we have that $(0,1,0), (0,0,1)\in \Psef_{10}(X_{2,d})$.  Likewise, $(1,3,0)$ is the image of the fundamental cycle of $X_{2,3}$ under the product map.  So we have
\[
\Psef_{10}(X_{2,d})\supseteq \langle (1,3,0), (0,1,0), (0,0,1)\rangle.
\]
Conversely, by Lemma \ref{almostNef} the cycle $\eta_{3,10}=3H\xi^9-9H^2\xi^8$ is nef if $d\geq 4$, so the containment above is in fact an equality.
\end{proof}
Theorem \ref{stableCasesPrecise} describes every nontrivial cone of cycles on $X_{2,3}$ except $\Psef_i(X_{2,3})$ with $i\in \{7,8,9\}$. By Proposition \ref{lowDim}, the effective cone of divisors on $X_{2,3}$ is spanned by $H$ and $\xi$.  When $i\in\{7,8\}$, the pseudoeffective cone of cycles is not known precisely, but we have the following bounds.
\begin{proposition}
\begin{align*}
\langle (5,12,0),(1,6,0),(0,1,1),(0,0,1)\rangle \subseteq&\Psef_7(X_{2,3}) \subseteq \langle(5,12,0),(0,1,0),(0,0,1)\rangle\\
\langle (2,7,0),(5,14,0),(0,1,2),(0,0,1)\rangle \subseteq &\Psef_8(X_{2,3})\subseteq \langle(2,5,0),(0,1,0),(0,0,1)\rangle
\end{align*}
\begin{proof}
By Proposition \ref{1products},
\[
\mu_*\Psef_7(X_{2,1}\times Y_{2,2})=\langle (1,3,0),(5,12,0),(0,5,14)\rangle
\]
and by Proposition \ref{2Products},  
\[
\mu_*\Psef_7(X_{2,2}\times Y_{2,1})= \langle (1,3,0),(5,12,0),(0,5,14)\rangle.
\]
Since $\Psef_7(X_{2,2})$ also contains the effective cycles $H\xi^2-2H^2\xi= (1,1,0) $ and $H^2\xi = (0,0,1)$,  we have
\[
\langle (5,12,0),(1,6,0),(0,1,1),(0,0,1)\rangle \subseteq \Psef_7(X_{2,3})
\]
Likewise,  by Propositions \ref{1products} and \ref{2Products}, we have
\begin{align*}
\mu_*\Psef_8(X_{2,1}\times Y_{2,2})=\langle (2,7,0)\rangle
\mu_* \Psef_8(X_{2,2}\times Y_{2,1})= \langle (5,14,0)\rangle.
\end{align*}
Since $\Psef_8(X_{2,3})$ also contains the cycles $H\xi-H^2=(1,2,0)$ and $H^2=(0,0,1)$, we have 
\[
\langle (2,7,0),(5,14,0),(0,1,2),(0,0,1)\rangle \subseteq \Psef_8(X_{2,3}).
\]
The bound $\Psef_7(X_{2,3})\subset \langle (5,12,0),(0,1,0),(0,01)$ follows by the same argument as the $i=7$ case of Theorem \ref{stableCasesPrecise}. Finally, the bound $\Psef_8(X_{2,3})\subseteq\langle (2,5,0),(0,1,0),(0,0,1)\rangle$ follows from the nefness of $\xi^2,H^2$, and the nefness of $2H\xi-5H^2$ away from the image of $X_{2,2}\times Y_{2,1}$.
\end{proof}
\end{proposition}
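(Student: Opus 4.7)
The plan is to bound $\Psef_7(X_{2,3})$ and $\Psef_8(X_{2,3})$ from below by pushing forward effective cycles from smaller universal hypersurfaces along the product maps, and from above by producing dual cones of nef (or ``almost nef'') cocycles.

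For the lower bounds, I would exploit the two product maps $\mu_1:X_{2,1}\times Y_{2,2}\ra X_{2,3}$ and $\mu_2:X_{2,2}\times Y_{2,1}\ra X_{2,3}$. Corollary \ref{456} gives $\Psef_i(X_{2,1})$ exactly for all relevant $i$, and Corollary \ref{conicCorollary} does the same for $X_{2,2}$. Using the product formula (Proposition \ref{productFormula}) with a top-degree cycle $\delta$ on the $Y$-factor, I would compute the images $\mu_{1*}\Psef_i(X_{2,1}\times Y_{2,2})$ and $\mu_{2*}\Psef_i(X_{2,2}\times Y_{2,1})$ for $i=7,8$; this is essentially the content of Propositions \ref{1products} and \ref{2Products}. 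Each extremal ray of each input cone produces a ray in $\Psef_i(X_{2,3})$. In addition I would throw in the visible effective rays $(0,1,j)$ and $(0,0,1)$ coming from fixing a point of $\PP^2$ and a linear subspace of $Y_{2,3}$ (these are strongly basepoint free by the construction in Lemma \ref{projectiveSpaceBPFCycles}).

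For the upper bounds I would dualize. The two standard nef cocycles $\xi^i$ and $H^2\xi^{i-2}$ are nef by Lemma \ref{easiestNefClasses}, and they alone cut out the cone $\langle e_0, e_1, \ldots\rangle$ with no constraint on the $e_2$ coordinate; what is needed is one more cocycle to bound the $e_2$ direction. For this I would use $\eta_{2,c}=2H\xi^{c-1}-(c-1)H^2\xi^{c-2}$ from Lemma \ref{almostNef} with $e=2$: for $c=7$ and $c=8$, $M=\binom{4}{2}=6<c$, so this class is not fully nef, but it is nef against cycles not supported on the image of $\mu_2$. Combined with the fact (from Lemma \ref{finiteMaps}) that the contribution of cycles supported on $\mu_2(X_{2,2}\times Y_{2,1})$ is exactly $\mu_{2*}\Psef_i(X_{2,2}\times Y_{2,1})$, dualizing $\langle \xi^i, H^2\xi^{i-2}, \eta_{2,i}\rangle$ and adding the $\mu_2$-image yields the claimed upper bounds. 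One must check that the $\mu_2$-image does not push past the upper bounds claimed, which follows from the explicit form of $\Psef_i(X_{2,2})$.

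The main obstacle is that the lower and upper bounds do not meet; indeed the statement of the proposition reflects exactly this gap. The ``almost nef'' class $\eta_{2,c}$ with $c>M$ controls only cycles transverse to $\mu_2(X_{2,2}\times Y_{2,1})$, and the pushforward image from $X_{2,2}\times Y_{2,1}$ itself produces rays (such as $(1,6,0)$ in the $i=7$ case) that we cannot rule out from being extremal, but also cannot verify as extremal without a new nef cocycle strictly separating $(1,6,0)$ from $(5,12,0)$. Producing such a cocycle would require understanding positivity phenomena on the locus of plane cubics supported on a conic plus a line, which seems to need a finer ``stratified'' positivity notion than the one supplied by Proposition \ref{sortaBPF}. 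Accordingly, I would present the result as an interval estimate rather than an equality, noting that closing the gap is left open.
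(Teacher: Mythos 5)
Your lower-bound argument coincides with the paper's: push forward $\Psef_i(X_{2,1}\times Y_{2,2})$ and $\Psef_i(X_{2,2}\times Y_{2,1})$ via the product formula (Propositions \ref{1products} and \ref{2Products}) and adjoin the visible effective classes with vanishing $e_2$-coordinate. That part is fine.

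The upper-bound half has a concrete gap in the choice of the third cocycle. You propose to use $\eta_{2,c}=2H\xi^{c-1}-(c-1)H^2\xi^{c-2}$ with $c=i\in\{7,8\}$, but Lemma \ref{almostNef} only establishes positivity away from $\mu(X_{2,2}\times Y_{2,1})$ for $c\leq M=6$; for $c>M$ the paper's remark only claims (and only plausibly) positivity against cycles \emph{dimensionally transverse} to that locus, which is strictly weaker than ``not supported on'' it, so your citation does not cover the class you need. The correct move --- and what the paper does --- is to take the $c=M$ class and multiply by the nef divisor $\xi$: the cocycle actually used is $\xi^{i-6}\eta_{2,6}=2H\xi^{i-1}-5H^2\xi^{i-2}$, with coefficient $5$ independent of $i$, not $i-1$. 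This matters quantitatively for $i=8$: dualizing your $\langle \xi^8, H^2\xi^6, 2H\xi^7-7H^2\xi^6\rangle$ against cycles off the image and adding $\mu_*\Psef_8(X_{2,2}\times Y_{2,1})=\langle(5,14,0)\rangle$ yields only $\Psef_8(X_{2,3})\subseteq\langle(2,7,0),(0,1,0),(0,0,1)\rangle$, strictly weaker than the claimed $\langle(2,5,0),(0,1,0),(0,0,1)\rangle$. (For $i=7$ your weaker cocycle happens to give the stated bound anyway, because there the binding ray $(5,12,0)$ comes from the $\mu$-image rather than from the dual cone.) Your closing discussion of why the two bounds do not meet is accurate and consistent with the paper's presentation.
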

\subsection{Stable pseudoeffective cones for universal plane curves}\label{stableConeBounds}
We now study $\Psef_i(X_{2,d})$ for $d$ large, and in particular the behavior of $\Psef_i(\PP^2_{stab})$. In two cases,  unambiguously extremal effective cycles are relatively easy to describe.
First, the cycle  $(0,0,1)$ is effective in $\Psef^c(X_{2,d})$ for all $c\geq 2$, since it is the class $H^2\xi^{c-2}$.
Second, the effectiveness of the class $(0,1,0)\in N^c(X_{2,d})$ is relatively well understood:
\begin{proposition} \label{010}
If $c\geq d+1$, the class $(0,1,0)\in N^c(X_{2,d})$ is represented by an effective cycle. Conversely, if $c \leq d$, then no multiple of $(0,1,0)$ is represented by an effective cycle.
\end{proposition}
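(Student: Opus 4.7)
The plan is to handle the two directions separately, with the converse being the more substantive one.

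For the forward direction, fix a line $\ell \subset \PP^2$ and consider
\[
W_\ell = \{(p,D) \in X_{2,d} : p \in \ell,\ \ell \subset D\}.
\]
Since containing $\ell$ imposes $d+1$ independent linear conditions on a degree $d$ plane curve, the locus $L_\ell \subset Y_{2,d}$ of divisors containing $\ell$ is a linear subspace of codimension $d+1$, so $W_\ell \cong \ell \times L_\ell$ has codimension $d+1$ in $X_{2,d}$. A direct computation of the intersection numbers $W_\ell \cdot \xi^{r-d}$, $W_\ell \cdot H\xi^{r-d-1}$, and $W_\ell \cdot H^2\xi^{r-d-2}$ on the product $\ell \times L_\ell$ yields $0,1,0$, so $[W_\ell] = (0,1,0) \in N^{d+1}(X_{2,d})$. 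For $c > d+1$, the cycle $W_\ell \cdot \xi^{c-d-1}$ is effective (as $\xi$ is basepoint free), and by the remark in Section \ref{preliminaries} that multiplication by $\xi$ preserves the covariant class once the dimension exceeds $n=2$, it retains class $(0,1,0)$ throughout the permissible range $d+1 \leq c \leq \binom{d+2}{2}-1$; the low-dimensional tail $i\in\{1,2\}$ is harmless because the only coordinate that could be lost, $d_2$, is already zero.

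For the converse, I would argue by contradiction. Suppose $W = \sum n_i W_i$ is an effective cycle with $W_i$ irreducible, $n_i>0$, and $[W] = k(0,1,0)$ for some $k>0$ with $c \leq d$. Since $\xi$ and $H$ are nef, the classes $\xi^i, H\xi^{i-1}, H^2\xi^{i-2}$ (with $i=r+1-c$) are nef by Lemma \ref{easiestNefClasses}, so each component satisfies $d_2(W_i), d_1(W_i), d_0(W_i) \geq 0$. Summing over components then forces $d_2(W_i) = d_0(W_i) = 0$ for every $i$, while at least one component $W_i$ must have $d_1(W_i) > 0$. For that component, $d_2(W_i) = 0$ forces $\pi_1(W_i) \subsetneq \PP^2$ and $d_1(W_i) > 0$ rules out $\pi_1(W_i)$ being a point, so $\pi_1(W_i) = C_i$ is an irreducible curve of some degree $\delta_i$ with $1 \leq \delta_i \leq d$.

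The heart of the proof is then to confine $W_i$ to a small sublocus of $\pi_1^{-1}(C_i)$. The map $\pi_2 \colon \pi_1^{-1}(C_i) \to Y_{2,d}$ is surjective and generically finite of degree $d\delta_i$ by B\'ezout, and its positive-dimensional fibers occur exactly over the linear subspace $L_{C_i} \subset Y_{2,d}$ of divisors containing $C_i$; the corresponding locus in $\pi_1^{-1}(C_i)$ is
\[
W_{C_i} := \{(p,D) : p \in C_i,\ C_i \subset D\}.
\]
The vanishing $d_0(W_i) = W_i \cdot \xi^{\dim W_i} = 0$ combined with the projection formula forces $\dim \pi_2(W_i) < \dim W_i$, which in turn forces $W_i \subset W_{C_i}$ because $\pi_2$ is finite off $W_{C_i}$. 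A direct count gives $\dim W_{C_i} = \binom{d-\delta_i+2}{2} \leq \binom{d+1}{2}$, with equality iff $\delta_i = 1$, so the codimension of $W_i$ in $X_{2,d}$ is at least $\binom{d+2}{2} - \binom{d+1}{2} = d+1$, contradicting $c \leq d$. The main obstacle is this fiber-jumping localization step; it is morally a reverse application of the reasoning in Proposition \ref{sortaBPF}, using an unexpected dimension collapse of the $\pi_2$-image to push an effective cycle into a small closed sublocus, rather than using a dimension match to extract positivity from a family.
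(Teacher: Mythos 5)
Your proposal is correct and follows essentially the same route as the paper: the same cycle $\{(p,D): p\in\ell_0\subset D\}$ (times powers of $\xi$) for effectiveness, and for the converse the same use of the vanishing of the $e_2$- and $e_0$-coordinates to confine any component with $d_1>0$ to the locus $\{(p,D): p\in C_0,\ C_0\subset D\}$, followed by the codimension count $\binom{d+2}{2}-\binom{d-\delta+2}{2}\geq d+1$. The only differences are cosmetic: you make the reduction to irreducible components explicit via nefness of $\xi^i$, $H\xi^{i-1}$, $H^2\xi^{i-2}$, and you localize via quasi-finiteness of $\pi_2$ off $W_{C_i}$ where the paper instead observes that $Z\subseteq C_0\times\pi_2(Z)$ forces equality by dimension reasons.
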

\begin{proof}
If $c=d+1$, then $(0,1,0)\in N^c(X_{2,d})$ is represented by the subvariety $Z_{\ell_0}$ of all pairs $(p,D)$ containing and marked on some fixed line $\ell_0$.  For $c>d+1$, we have $(0,1,0)=[Z_{\ell_0}]\xi^{c-d-1}$.

Conversely,  suppose $Z\subset X_{2,d}$ is an irreducible variety with class $(0,k,0)\in N_i(X_{2,d})$.  Since $[Z]\xi^{i-2}H^2=0$,  there is some irreducible curve $C_0\subset \PP^2$ such that $p\in C_0$ for any $(p,D)\in Z$.  Likewise, since $[Z]\xi^i=0$, the image of $Z$ in $Y_{2,d}$ is a subvariety $B$ of dimension at most $i-1$. Since $\dim Z = i$ and $Z\subseteq \{(p,D)\vert p\in C_0, D\in B\}$, we must have
\[
Z= \{(p,D)\vert p \in C_0, D \in B\}.
\]
Hence, if $D\in B$, $D$ contains $C_0$.Containing $C_0$ imposes at least $d+1$ conditions on $B$, so $B$ has codimension at least $d+1$ in $Y_{2,d}$, and $Z$ has codimension at least $d+1$ in $X_{2,d}$.
\end{proof}
\begin{remark}
While $(0,1,0)\in N^{c}(X_{2,d})$ is not effective for $c\leq d+1$, except in cases covered above,  it is not clear whether it is a pseudoeffective class. We expect $(0,1,0)$ to be pseudoeffective only in the cases where Proposition \ref{010} gives its effectiveness.
\end{remark}
The above and Theorem \ref{stableCasesPrecise} hint toward the following refinement of Conjecture \ref{stabilizationV2}.
\begin{conjecture}\label{641}
For any $i$, there exists a rational number $\delta(i)$ and an integer $d(i)$ such that
\[
\Psef_i(\PP^2_{stab})=\Psef_i(X_{2,d(i)})=\langle (1,\delta(i),0),(0,1,0)(0,0,1)\rangle
\]
\end{conjecture}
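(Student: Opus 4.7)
The conjecture has two ingredients: that the stable cone $\Psef_i(\PP^2_{stab})$ is the simplicial cone $\langle (1,\delta(i),0),(0,1,0),(0,0,1)\rangle$, and that stabilization occurs at some finite $d(i)$. I would pursue matching upper and lower bounds, following the pattern of Theorem \ref{stableCasesPrecise} but aiming for uniformity in $i$. Throughout, $c = \binom{d+2}{2} - i$ denotes the codimension on $X_{2,d}$, which grows with $d$ for fixed $i$.

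For the lower bound, the ray $(0,0,1)$ is always effective as $H^2\xi^{c-2}$, and $(0,1,0)$ becomes effective once $d$ is large enough that $c\geq d+1$, by Proposition \ref{010}. The crucial third ray $(1,\delta(i),0)$ should arise as the pushforward $\mu_*$ of an extremal cycle on $X_{2,e}\times Y_{2,d-e}$ for some optimal $e = e(i)$. Concretely, pushing forward the fundamental class of $X_{2,e}$ (or a product of a top-dimensional cycle with a linear subspace of $Y_{2,d-e}$) yields, via Proposition \ref{productFormula}, a class of shape $(a,b,0)$ with a computable slope $b/a$; the conjectural $\delta(i)$ should be the minimum of these slopes over the relevant range of $e$.

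For the upper bound, the natural candidates are the cocycles $\eta_{e,c} = eH\xi^{c-1} - (c-1)H^2\xi^{c-2}$ furnished by Lemma \ref{almostNef}: these are strongly basepoint free when $c < \binom{e+2}{2}$, and at the threshold $c = \binom{e+2}{2}$ they are nef away from $\mu(X_{2,e}\times Y_{2,d-e})$. Combined with the manifestly nef classes $\xi^c$ and $H^2\xi^{c-2}$, the cocycle $\eta_{e,c}$ for the optimal $e$ cuts out the desired cone on the complement of the product locus. On the product locus itself, the effective cone is controlled inductively by $\Psef_j(X_{2,e})$ for smaller $j$, via the product formula, so the two analyses can be combined.

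The main obstacle is twofold. First, one must identify $e(i)$ and verify that the slope $(c-1)/e$ coming from the upper-bound cocycle exactly matches the slope coming from the pushforward; in the cases handled so far this is verified ad hoc for small $e$, but a uniform combinatorial formula for $\delta(i)$ is needed to cover all $i$. Second, establishing an explicit $d(i)$ at which stabilization occurs requires ruling out exotic effective classes that might appear in arbitrarily high codimension; this likely demands a strengthening of Proposition \ref{2.4} showing that $\eta_{e,c}$ has positive intersection with \emph{every} cycle of the appropriate dimension once $d$ is large relative to $i$, not merely those dimensionally transverse to the product loci. I expect this last step, together with producing enough independent $\eta_{e,c}$-type cocycles to exhaust the higher-codimension phenomena, to be the principal difficulty.
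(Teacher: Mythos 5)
The statement you are addressing is Conjecture \ref{641}, which the paper does \emph{not} prove; it explicitly records that outside the cases of Theorem \ref{stableCasesPrecise} the conjecture is open, and the best it can do is the two-sided bound of Theorem \ref{628}. Your plan is, almost point for point, the strategy the paper uses to obtain that partial result: lower bounds on $\Psef_i(X_{2,d})$ from $(0,0,1)=H^2\xi^{c-2}$, from $(0,1,0)$ via Proposition \ref{010}, and from pushforwards $\mu_*$ of fundamental classes of $X_{2,e}\times Y_{2,d-e}$ computed with the product formula; upper bounds from $\xi^c$, $H^2\xi^{c-2}$, and the cocycles $\eta_{e,c}$ of Lemma \ref{almostNef}, handling the product locus $\mu(X_{2,e}\times Y_{2,d-e})$ by induction on $i$. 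So as a reconstruction of the paper's method your proposal is faithful, and you are candid that it is a plan rather than a proof.

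The genuine gap is that this method, as it stands, provably does not close. The pushforward construction yields the two slopes $m+1$ and $\tfrac{2i-2}{m+3}$ (with $m$ the largest integer with $\binom{m+2}{2}\leq i$), giving $\delta_{\max}(i)$, while the $\eta_{e,c}$ cocycles for the two relevant values of $e$ give $\delta_{\min}(i)=\max\bigl(\tfrac{m+3}{2},\tfrac{i-1}{m+1}\bigr)$, and these agree only sporadically (e.g.\ they match at $i=10$ but not at $i=8$, where $\delta_{\min}=5/2<14/5=\delta_{\max}$, nor at $i=9$). So the "main obstacle" you flag --- finding the right $e(i)$ so that the upper-bound slope matches the pushforward slope --- is not a matter of bookkeeping but the actual open problem: either new effective classes beyond the product-map pushforwards must be found (raising $\delta_{\max}$ is impossible, so one would need to lower it by exhibiting cycles of smaller slope), or genuinely new nef or nef-away-from-controlled-loci cocycles beyond the $\eta_{e,c}$ family must be constructed to raise $\delta_{\min}$. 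Likewise your second obstacle, pinning down a finite $d(i)$ at which the cone stabilizes, is not addressed by any mechanism in the plan. Your proposal is therefore a correct description of how one would attack the conjecture, and of why the paper's Theorem \ref{628} has the shape it does, but it is not a proof and should not be presented as one.
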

Except in the cases covered by Theorem \ref{stableCasesPrecise},  Conjecture \ref{641} has not been verified. The best bounds we have are the following.
\begin{theorem}\label{628}
Let $i>1$ be an integer, and define $m$, $d_0$, $\delta_{\max}(i)$,  and $\delta_{\min}(i)$ as follows:
\begin{align*}
m&=\lfloor \frac{-3+\sqrt{1+8i}}{2}\rfloor\\
d_0& = m+\lceil \frac{-3+\sqrt{17+8m}}{2}\rceil\\
\delta_{\max}(i)&=\twopartdef{\min\left(\frac{2i-2}{m+3},m+1\right)}{i\geq 3}{1}{i=2}\\
\delta_{\min}(i)&=\max\left(\frac{m+3}{2}, \frac{i-1}{m+1}\right).
\end{align*}
For all $i\geq 2$ and $d\geq d_0$ we have
\begin{equation}\label{630}
\langle (1, \delta_{\max}(i),0),(0,1,0),(0,0,1)\rangle \subseteq \Psef_i(X_{2,d}),
\end{equation} 
and for $i\geq 8$ (and any $d$) we have
\begin{equation}\label{634}
\Psef_i(\PP^2_{stab}) \subseteq \langle (1,\delta_{\min}(i),0),(0,1,0),(0,0,1)\rangle.
\end{equation}

\end{theorem}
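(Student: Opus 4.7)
The plan is to establish Theorem \ref{628} in two parts: the lower bound \eqref{630} by constructing effective cycles realizing each extremal ray of the target cone, and the upper bound \eqref{634} by pairing effective cycles against nef cocycles to recover the dual inequalities. The rays $(0,1,0)$ and $(0,0,1)$ are handled by Proposition \ref{010} (once $d$ is sufficiently large) and by the manifestly effective $H^2\xi^{i-2}$ respectively, so the main content lies in realizing the ray $(1,\delta_{\max}(i),0)$ effectively and in deriving the bound $b/a\geq \delta_{\min}(i)$ from nef classes.

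For the lower bound I split according to which expression realizes $\delta_{\max}(i)=\min\bigl(\tfrac{2i-2}{m+3},\,m+1\bigr)$. A direct calculation with the projective bundle relation $\xi^r = mH\xi^{r-1} - m^2H^2\xi^{r-2}$ on $X_{2,m}$ shows $[X_{2,m}]=(1,m,0)$, and the analogous calculation on $X_{2,m+1}$ shows that $\xi^{\binom{m+3}{2}-i}$ is a cycle of dimension $i$ and class $(1,m+1,0)$. When $\delta_{\max}(i)=\tfrac{2i-2}{m+3}$, I push $[X_{2,m}]$ forward along $\mu\colon X_{2,m}\times Y_{2,d-m}\to X_{2,d}$ against a general $(i-\binom{m+2}{2})$-plane of $Y_{2,d-m}$; Proposition \ref{productFormula}, combined with the identity $\binom{m+2}{2}-1 = \tfrac{m(m+3)}{2}$, yields a cycle proportional to $\bigl(1,\tfrac{2(i-1)}{m+3},0\bigr)$. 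When $\delta_{\max}(i)=m+1$, I push $\xi^{\binom{m+3}{2}-i}$ forward against a point of $Y_{2,d-m-1}$, and the product formula preserves the class $(1,m+1,0)$. The definition of $d_0$ is chosen so that $\dim Y_{2,d-m}\geq m+1$, which is the largest value of $i-\binom{m+2}{2}$ in our range; this guarantees that the required linear subspace exists.

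For the upper bound I analyse a general $\beta=(a,b,c)\in\Psef_i(X_{2,d})$. Pairing against $H^2\xi^{i-2}$ and $\xi^i$, both products of nef divisors, gives $a,c\geq 0$. Pairing against $\eta_{m+1,i}$, which is strongly basepoint free by Lemma \ref{almostNef} since $i<\binom{m+3}{2}$, yields $b/a \geq (i-1)/(m+1)$. For the bound $b/a \geq (m+3)/2$ I pair against $\eta_{m,i}$: by the boundary case of Lemma \ref{almostNef} and its extension in the following Remark (applicable once $d$ is large enough that $i\leq md$), this class is positive on effective cycles that are dimensionally transverse to $\mu(X_{2,m}\times Y_{2,d-m})$, giving $b/a \geq (i-1)/m$, which exceeds $(m+3)/2$ exactly in the range $i\geq\binom{m+2}{2}$ of interest. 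For cycles supported on the product image, the product formula expresses them as $\mu_*(\eta\times\delta')$ with $\eta\in\Psef_s(X_{2,m})$, so $b/a=(\eta_1/\eta_2)(i-1)/(s-1)$, and this ratio is controlled inductively using the structure of $\Psef_s(X_{2,m})$ provided by Theorem \ref{stableCasesPrecise} and Proposition \ref{lowDim}.

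The main obstacle will be the inductive control of cycles on the product image contributing to the $(m+3)/2$ bound: the non-product case is handled cleanly by $\eta_{m,i}$, but ruling out product cycles with smaller $b/a$ requires carefully leveraging the known extremal ratios in $\Psef_s(X_{2,m})$ across all relevant $s$, and verifying that the combinatorial thresholds encoded in $d_0$ control the required dimensions throughout the argument.
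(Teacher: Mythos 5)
Your lower-bound argument coincides with the paper's: you build the same two effective cycles, namely $\mu_*([X_{2,m}]\times[\Lambda])$ for $\Lambda$ an $(i-\binom{m+2}{2})$-plane in $Y_{2,d-m}$ (class proportional to $(m+3,2i-2,0)$ by the product formula) and $\xi^{\binom{m+3}{2}-i}\cdot[\mu(X_{2,m+1}\times\{D\})]$ (class $(1,m+1,0)$), and your reading of the role of $d_0$ is exactly the paper's. Likewise the inequality $b/a\geq (i-1)/(m+1)$ via the strongly basepoint free class $\eta_{m+1,i}$ matches the paper.

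The gap is in your derivation of $b/a\geq (m+3)/2$. You pair against $\eta_{m,i}=mH\xi^{i-1}-(i-1)H^2\xi^{i-2}$, whose positivity for $i>M:=\binom{m+2}{2}$ rests on the Remark following Lemma \ref{almostNef}; that remark only asserts positivity on cycles \emph{dimensionally transverse} to $\mu(X_{2,m}\times Y_{2,d-m})$, and is offered as a plausible extension rather than proved. Your case division is then ``dimensionally transverse'' versus ``supported on the product image,'' which is not a dichotomy: an irreducible $i$-cycle can meet the product locus in excess dimension without being contained in it, and for $c=i>M$ the fiber dimension of the incidence correspondence defining $\eta_{m,c}$ jumps by $i-M+1>1$ over the product locus, so mere non-containment does not preserve the codimension of the preimage required by Proposition \ref{2.4}. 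Such cycles fall through both of your cases. The paper instead uses $\xi^{i-M}\eta_{m,M}=\tfrac{m}{2}\left(2H\xi^{i-1}-(m+3)H^2\xi^{i-2}\right)$: because $\eta_{m,M}$ has codimension exactly $M$, the fiber jump is exactly $1$ and the proven $c=M$ case of Lemma \ref{almostNef} gives positivity on \emph{every} irreducible cycle not contained in the product locus, so ``contained'' versus ``not contained'' is a genuine dichotomy, and the class yields precisely the needed ratio $(M-1)/m=(m+3)/2$. A secondary issue: for cycles supported on the product image you invoke Theorem \ref{stableCasesPrecise} and Proposition \ref{lowDim} for the structure of $\Psef_s(X_{2,m})$, but these cover only $s\leq 7$ and $s=10$; for $8\leq s\leq M$ you must apply the containment (\ref{634}) itself inductively in $i$, and closing that induction requires the monotonicity statements of Lemma \ref{650}, which your sketch does not address.
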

\begin{remark}
\begin{enumerate}
\item The containment (\ref{630}) is an equality for $2\leq i\leq 7$ and $i=10$ by Theorem \ref{stableCasesPrecise}. The containment (\ref{634}) fails for $i\in \{3,4,6,7\}$; indeed in those cases $\delta_{\min}(i)>\delta_{\max}(i)$.
\item $m$ is the greatest integer satisfying $\binom{m+2}{2}\leq i$, and $d_0$ is the least positive integer satisfying $m+2\leq \binom{d_0-m+2}{2}$.
Both inequalities are important in the proof. 
\end{enumerate}
\end{remark}
Before proving this proposition, we require one numerical lemma.
\begin{lemma}\label{650}
\begin{enumerate}
\item For $i\geq 2$, the functions $\frac{\delta_{\max}(i)}{i-1}$ and $\frac{\delta_{\min}(i)}{i-1}$ are non-increasing in $i$.
\item For $i\geq 8$, $\delta_{\max}(i)\geq \delta_{\min}(i)$.
\end{enumerate}
\end{lemma}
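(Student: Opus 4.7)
The plan is to convert both parts into elementary inequalities in $m$ and $i$, using the characterization from the remark that $m$ is the unique integer with $\binom{m+2}{2}\leq i<\binom{m+3}{2}$, equivalently
\[
(m+1)(m+2)/2 \leq i \leq (m+2)(m+3)/2 - 1.
\]

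For part (1), I would analyze $\delta_{\max}(i)/(i-1)$ and $\delta_{\min}(i)/(i-1)$ separately, splitting each by which branch of the $\min$ or $\max$ is active. For $\delta_{\max}(i)/(i-1)$, the branch $2(i-1)/(m+3)$ produces $2/(m+3)$, constant in $i$ on each fixed-$m$ range and strictly decreasing in $m$; the branch $m+1$ produces $(m+1)/(i-1)$, strictly decreasing in $i$ on each fixed-$m$ range. The internal switch between branches occurs at $i = (m+1)(m+3)/2+1$, where both formulas evaluate to $2/(m+3)$, giving continuity, and at the $m$-boundary $i=\binom{m+3}{2}$ direct substitution with $m'=m+1$ shows the new ratio equals $2/(m+4)$, strictly smaller than the preceding value $(m+1)/((m+2)(m+3)/2-2)$. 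An identical strategy handles $\delta_{\min}(i)/(i-1)=\max\!\left(\frac{m+3}{2(i-1)},\frac{1}{m+1}\right)$, where the first branch is strictly decreasing in $i$ and the second is constant on each fixed-$m$ range.

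For part (2), I would expand $\min\geq\max$ into four inequalities:
\begin{enumerate}
\item[(a)] $4(i-1)\geq (m+3)^2$;
\item[(b)] $2(m+1)\geq m+3$;
\item[(c)] $2(m+1)\geq m+3$;
\item[(d)] $(m+1)^2\geq i-1$.
\end{enumerate}
Since $i\geq 8$ forces $m\geq 2$, (b) and (c) are automatic. For (a), the lower bound $i\geq (m+1)(m+2)/2$ gives $4(i-1)\geq 2m^2+6m$, which exceeds $(m+3)^2=m^2+6m+9$ iff $m\geq 3$; the remaining cases $m=2$, $i\in\{8,9\}$ are handled by direct substitution, as $4\cdot 7=28\geq 25$ and $4\cdot 8=32\geq 25$. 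For (d), the upper bound $i-1\leq (m+2)(m+3)/2-2$ reduces the inequality to $m^2-m\geq 0$, which holds for $m\geq 1$.

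The main obstacle is the transition bookkeeping in part (1): one must confirm that no ratio jumps upward across an $m$-transition, especially when that transition coincides with a switch in the active branch of the $\min$ or $\max$. Once the transition points are evaluated explicitly and matched, however, the piecewise structure makes the global monotonicity transparent, and part (2) reduces entirely to the arithmetic above.
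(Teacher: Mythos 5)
Your proposal is correct and takes essentially the same approach as the paper's proof: on each fixed-$m$ range the ratios are non-increasing branch by branch, the $m$-transitions are verified by direct substitution, and part (2) reduces to the same four elementary inequalities in $m$ and $i$ via the bounds $\binom{m+2}{2}\leq i\leq\binom{m+3}{2}-1$, with $i=8,9$ checked by hand. The only step you leave implicit is the $m$-transition check for $\delta_{\min}(i)/(i-1)$, which works out to equality (the ratio is $\tfrac{1}{m+1}$ on both sides of the transition), exactly as in the paper.
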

\begin{proof}
Since $m$ is the largest integer such that $m^2+3m\leq 2i-2$,  we have $(m+1)^2+3(m+1)> 2i-2$ as well.\\
\noindent (1): Both $\frac{\delta_{\max}(i)}{i-1}$ and $\frac{\delta_{\min}(i)}{i-1}$ are clearly non-increasing between $i$ and $i+1$ except possibly if $i=\binom{p+2}{2}-1$ and $i+1=\binom{p+2}{2}$ where $p$ is a positive integer; those are the places where $m$ increases. Those cases must be handled by a direct calculation. So set $i=\binom{p+2}{2}-1=\frac{p^2+3p}{2}$. We have
\[
\frac{\delta_{\max}(i)}{i-1}=\frac{p}{i-1}=\frac{2}{p+3},
\]
and
\[
\frac{\delta_{\max}(i+1)}{i}=\frac{2i-2}{(i-1)(p+4)}=\frac{2}{p+4}.
\]
Likewise,
\[
\frac{\delta_{\min}(i)}{i-1}=\frac{1}{p},
\]
and
\[
\frac{\delta_{\min}(i+1)}{i}=\frac{1}{p}.
\]

So both $\frac{\delta_{\max}(i)}{i-1}$ and $\frac{\delta_{\min}(i)}{i-1}$ are non-increasing between $i=\binom{p+2}{2}-1$ and $i=\binom{p+2}{2}$.  \\
\noindent (2): For any $m\geq 1$, we have $\frac{m+3}{2} \leq m+1$ and $\frac{i-1}{m+1} \leq \frac{2i-2}{m+3}$.  For $i\geq 10$, we have $m\geq 3$, implying
\[
\frac{2i-2}{m+3}\geq m \geq \frac{m+3}{2}.
\]
And for $i\geq 6$, we have $m\geq 2$, implying
\[
\frac{i-1}{m+1}\leq \frac{m+4}{2}\leq m+1.
\]
So for any $i\geq 10$ we have $\delta_{\max}(i)\geq \delta_{\min}(i)$. We have $\delta_{\min}(8)=\frac{5}{2}$, $\delta_{\max}(8)=\frac{14}{5}$, $\delta_{\min}(9)=\frac{8}{3}$, and $\delta_{\max}(9)=3$, so the lemma is true in those cases as well.
\end{proof}
\begin{proof}[Proof of Theorem \ref{628}]
We first show $(1,\delta_{\max}(i),0)\in \Psef(\PP^2_{stab})$.
Theorem \ref{stableCasesPrecise} establishes this containment for $i\leq 7 $, so we may assume $i\geq 8$. Set $m$ and $d_0$, as in the theorem statement, and let $d\geq d_0$ be an integer.  Set $M=\binom{m+2}{2}$.
 From the definition of $d_0$, we have  that 
\[
m+1 \leq \binom{d-m+2}{2}-1, 
\]
so since $i-M\leq m+1$, we have
\[
i-M\leq \binom{d-m+2}{2}-1, 
\]
Since $i\geq 3$, we have that $m\geq 1$ and $d\geq d_0\geq m+1$, and hence the spaces $X_{2,m}$ and $Y_{2,d-m-1}$ are defined. Fix a linear series $V\subset Y_{2,d-m}$  of dimension $i-M$, and let $Z$ be the image in $X_{2,d}$ of $X_{2,m}\times B$. Then, by the product formula, $Z$ is an effective cycle of numerical class
\[
[Z]=\left(\binom{i-2}{i-M}, m\binom{i-1}{i-M},0\right)
\]
This is a multiple of the cycle $(m+3,2i-2,0)$.

Likewise, given $D\in Y_{2,d-m-1}$, we have that the class of $Z'=\mu(X_{2,m+1}\times \{D\})$ in $\Psef_{\binom{m+3}{2}}(X_{2,d})$ is $(1,m+1,0)$,  so $[Z'] \xi^{\binom{m+3}{2}-i}$ is an effective cycle in $\Psef_i(X_{2,d})$ of class $(1,m+3,0)$.  Since both $(1,m+3,0)$ and $(m+3,2i-2,0)$ are in $\Psef_i(X_{2,d})$, we have $(1,\delta_{\max}(i),0)\in \Psef_i(X_{2,d})$.

We now prove that if $i\geq 8$, then (\ref{634}) holds, using induction on $i$.
First, note that if $i\geq \binom{d+2}{2}=\dim(X_{2,d})$, the containment holds trivially. 
In particular, if $d\geq m$, the containment holds. 
So in what follows we may assume $d\geq m$ and $i<\binom{d+2}{2}$.
Suppose (\ref{634}) has been established for all $\Psef_j(X_{2,d})$ with $8\leq j < i$.  We establish (\ref{634}) for $\Psef_i(X_{2,d})$.
Since $\xi^{i}$ and $H^2\xi^{i-2}$ are nef cycles on all $X_{2,d}$, this requires showing that both of the two cycles $2H\xi^ {i-1}-(m+3)H^2\xi^{i-2}$ and $(m+1) H\xi^{i-1}-(i-1)H^2\xi^{i-2}$ are nef. By Lemma \ref{almostNef}, the cycle $(m+1) H\xi^{i-1}-(i-1)H^2\xi^{i-2}$ is nef as long as
\[
i<\binom{m+3}{2},
\]
which holds by the observation that $m$ is the greatest integer such that $i\geq \binom{m+2}{2}$.  Likewise, the cycle $2H\xi^ {i-1}-(m+3)H^2\xi^{i-2}$ is a multiple of $mH\xi^{i-1}-(M-1)\xi^{i-2}$.  We have $i\geq M$, so this cycle is $\xi^{i-M}\eta_{m,M}$ in the notation of Lemma \ref{almostNef}.  We claim this cycle is nef if $i\geq 8$. By Lemma \ref{almostNef}, $\xi^{i-M}\eta_{m,M}$ is nef on effective cycles not supported on $\mu(X_{2,m}\times Y_{2,d-m})$, so we show $\xi^{i-M}\eta_{m,M}$ is nef on cycles supported on that locus as well.

If $i>M$ and $M\geq j\geq 8$, by the inductive hypothesis we have
\[
\Psef_j(X_{2,m})\subseteq \langle(1,\delta_{\min}(j),0), (0,1,0),(0,0,1)\rangle,
\]
so by the product formula, we have
\[
\mu_*(\Psef_j (X_{2,m})\times \Psef_{i-j}(X_{2,d-m}))\subseteq \langle (1,\frac{i-1}{j-1}\delta_{\min}(j),0),(0,1,0),(0,0,1)\rangle.
\]
And if $2\leq j\leq 7$, by Theorem \ref{stableCasesPrecise} we have
\[
\mu_*(\Psef_j (X_{2,m})\times \Psef_{i-j}(X_{2,d-m}))\subseteq \langle (1,\frac{i-1}{j-1}\delta_{\max}(j),0),(0,1,0),(0,0,1)\rangle.
\]
Combining and using Lemma \ref{650}, we hence have
\[
\mu_*(\Psef_i (X_{2,m}\times X_{2,d-m}))\subseteq \langle (1,\delta_{\min}(i),0),(0,1,0),(0,0,1)\rangle.
\]
 So $\xi^{i-M}\eta_{c,M}$ is nef on cycles supported on $\mu(X_{2,m}\times Y_{2,d-m})$, and therefore is nef in general.
   
 If $i=M$, we  note that $\Psef_M(X_{2,m})$ is spanned by the fundamental cycle, that is, 
\[
\Psef_M(X_{2,m})=\langle (1,m,0)\rangle.
\]
$\eta_{m,M}$ intersects this cycle non-negatively, and hence is nef on all cycles supported on $\mu(X_{2,m}\times Y_{2,d-m})$.
\end{proof}
\begin{corollary}
For any $i$, the stable cones $\Psef_i(\PP^2_{stab})$ satisfy the bounds
\[
\langle (1, \delta_{\max}(i),0),(0,1,0),(0,0,1)\rangle \subseteq \Psef_i(\PP^2_{stab})\subseteq   \langle (1,\delta_{\min}(i),0),(0,1,0),(0,0,1)\rangle.
\]
\end{corollary}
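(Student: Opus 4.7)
The plan is to deduce the corollary directly from Theorem \ref{628}. By definition, $\Psef_i(\PP^2_{stab})$ is the closure of $\bigcup_d p_{H*}\Psef_i(X_{2,d})$ as $H$ ranges over basepoint free classes on $\PP^2$, equivalently over $d \geq 1$. Both inclusions will be read off from this description.

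For the lower bound, I would verify that each of the three generators $(1,\delta_{\max}(i),0)$, $(0,1,0)$, and $(0,0,1)$ lies in $\Psef_i(X_{2,d})$ for $d$ sufficiently large. The class $(1,\delta_{\max}(i),0)$ is produced by equation (\ref{630}) of Theorem \ref{628} once $d \geq d_0(i)$. The class $(0,0,1) = H^2\xi^{c-2}$ is a complete intersection, hence effective whenever $c \geq 2$. The class $(0,1,0)$ is effective by Proposition \ref{010} as soon as the codimension $c = \dim X_{2,d} - i = \binom{d+2}{2}+1-i$ satisfies $c \geq d+1$, which holds for $d \gg 0$ since $c$ grows quadratically in $d$ while $d+1$ grows linearly. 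Pushing these three classes forward to $\Psef_i(\PP^2_{stab})$ and taking closure gives the first inclusion.

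For the upper bound, when $i \geq 8$ equation (\ref{634}) of Theorem \ref{628} directly asserts the stated containment for every individual $d$; passing to the union over $d$ and then to its closure preserves the containment. For the remaining values $2 \leq i \leq 7$, I would invoke Theorem \ref{stableCasesPrecise} to compute $\Psef_i(X_{2,d})$ exactly for $d$ large. This identifies the stable cone with $\langle (1,\delta(i),0),(0,1,0),(0,0,1)\rangle$ for the tabulated $\delta(i)$, and a direct numerical comparison with the formula for $\delta_{\min}(i)$ completes the check.

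The proof is essentially bookkeeping on top of Theorem \ref{628}; the one place that requires attention is confirming that the ray $(0,1,0)$ really does enter the stable cone (as opposed to only being pseudoeffective at some intermediate $d$), and this is exactly the content of Proposition \ref{010}. No substantively new obstacle arises beyond the two main ingredients already proved.
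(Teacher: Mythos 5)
The paper gives no separate proof of this corollary---it is meant to be read off from Theorem \ref{628} exactly as you propose, and your treatment of the lower bound and of the upper bound for $i\geq 8$ is fine. In fact you do slightly more work than needed: the containment (\ref{630}) already lists all three generators, so the separate verifications of $(0,1,0)$ and $(0,0,1)$ are redundant (and note $\dim X_{2,d}=\binom{d+2}{2}$, so the codimension of an $i$-cycle is $\binom{d+2}{2}-i$, not $\binom{d+2}{2}+1-i$; this is harmless). Also, for $i\geq 8$ the containment (\ref{634}) is stated for $\Psef_i(\PP^2_{stab})$ itself, not for the individual $X_{2,d}$, so there is literally nothing left to check in that range.

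The gap is in your final step. For $2\leq i\leq 7$ you assert that comparing the $\delta(i)$ of Theorem \ref{stableCasesPrecise} with $\delta_{\min}(i)$ ``completes the check.'' It does not: for $i\in\{3,4,6,7\}$ (and indeed also $i=2$) one computes $\delta_{\min}(i)>\delta(i)=\delta_{\max}(i)$---for instance $\delta_{\min}(3)=2$ while $\delta(3)=1$, and $\delta_{\min}(6)=\tfrac52$ while $\delta(6)=2$---so the stable cone, which by Theorem \ref{stableCasesPrecise} equals $\langle(1,\delta(i),0),(0,1,0),(0,0,1)\rangle$, is strictly \emph{larger} than $\langle(1,\delta_{\min}(i),0),(0,1,0),(0,0,1)\rangle$, and the claimed upper containment fails. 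The paper itself concedes this in the remark following Theorem \ref{628}, which states that (\ref{634}) fails for $i\in\{3,4,6,7\}$. So the corollary's ``for any $i$'' overstates what is provable: the upper bound should be restricted to $i\geq 8$ (equivalently, to those $i$ with $\delta_{\min}(i)\leq\delta_{\max}(i)$). Your write-up should flag this discrepancy rather than claim the numerical comparison succeeds.
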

\subsection{Pushforwards of effective cones}\label{formulary}
In this section we write down formulas for the pushforward of some effective cones of cycles on the universal hypersurfaces $X_{n,d}$ under product maps $\mu$.
Every formula in this section is an application of the product formula, Proposition \ref{productFormula}.

We first handle products involving the universal hyperplane, $X_{n,1}$. By Proposition \ref{hyperplaneCone} we have
\begin{equation}\label{564}
\Psef_i(X_{n,1})=\twopartdef{\langle e_n+e_{n-1},e_{n-1}+e_{n-2},\ldots,e_{i+1-n}+e_{i-n}\rangle}{i\geq n}{\langle e_0,e_1,\ldots, e_i\rangle}{i\leq n-1}
\end{equation}
Applying the product formula to (\ref{564}) gives the following.
\begin{proposition}\label{1products}
Set $M = \dim(Y_{n,d}) = \binom{n+d}{d}-1$.  

If $0\leq i\leq n-1$, then
\[
\mu_*(\Eff_i(X_{n,1}\times Y_{n,d}))= \langle  e_0,\ldots,e_i\rangle.
\]
If $n\leq i\leq 2n-1$, then
\[
\mu_*(\Eff_i(X_{n,1}\times Y_{n,d}))=\langle e_0,\ldots,e_{n-1}, e_n+e_{n-1}\rangle.
\]
If $2n-1\leq i\leq M+n-1$, then
\[
\mu_*(\Eff_i(X_{n,1}\times Y_{n,d}))=\langle e_0,\ldots,e_{n-1} ne_n+(i-n+1)e_{n-1}\rangle.
\]
If $M+n \leq i\leq M+2n-1$, then
\begin{align*}
\mu_*(\Eff_i(X_{n,1}\times Y_{n,d})) =&\sum_{ i-M-n+1\leq j \leq n} \langle (i-j-M+1)e_j+(i-j+1)e_{j-1}\rangle \\
+&\sum_{ i-M-n+1\leq j \leq n} \langle n e_j +(i-j+1) e_{j-1}\rangle.
\end{align*}
\end{proposition}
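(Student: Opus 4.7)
The plan is to reduce everything to K\"unneth plus the product formula (Proposition \ref{productFormula}). Since $Y_{n,d}=\PP^M$, the effective cone $\Eff_k(Y_{n,d})$ is the one-dimensional ray generated by the class $[\Lambda_k]$ of a linear $k$-subspace, and so
\[
\Eff_i(X_{n,1}\times Y_{n,d})=\sum_{j+k=i}\Eff_j(X_{n,1})\times\langle[\Lambda_k]\rangle.
\]
It therefore suffices to compute $\mu_*(\alpha\times[\Lambda_{i-j}])$ as $\alpha$ ranges over the extremal generators of $\Eff_j(X_{n,1})$ recorded in (\ref{564}), and $j$ ranges over $\max(0,i-M)\leq j\leq \min(i,2n-1)$.

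The product formula, applied with $\delta=[\Lambda_{i-j}]$ of degree $1$, yields two shapes of output. First, if $0\leq l\leq j\leq n-1$, then $\mu_*(e_l\times[\Lambda_{i-j}])=\binom{i-l}{i-j}e_l$, a positive multiple of $e_l$. Second, if $j\geq n$ and $j+1-n\leq k\leq n$, then
\[
\mu_*\bigl((e_k+e_{k-1})\times[\Lambda_{i-j}]\bigr)=\binom{i-k}{i-j}e_k+\binom{i-k+1}{i-j}e_{k-1},
\]
which lies in the $(e_k,e_{k-1})$ plane with $e_{k-1}$-to-$e_k$ ratio equal to $(i-k+1)/(j-k+1)$. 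Since this ratio is strictly decreasing in $j$ for fixed $k$, pushforwards at intermediate values of $j$ lie in the convex hull of the two rays produced at the endpoints of the valid $j$-range.

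With these ingredients the four cases follow by tracking which $j$-endpoints are attained. When $i\leq n-1$, all $j\leq i\leq n-1$ and only type (i) pushforwards occur, yielding $\langle e_0,\ldots,e_i\rangle$. When $n\leq i\leq 2n-1$, the upper endpoint $j=i$ is available and at $k=n$ gives the ray $e_n+e_{n-1}$; all other pushforwards are either pure $e_l$'s with $l\leq n-1$ or strictly more $e_{n-1}$-heavy, so the total cone is $\langle e_0,\ldots,e_{n-1},e_n+e_{n-1}\rangle$. When $2n-1\leq i\leq M+n-1$, the binding upper bound on $j$ is the dimensional bound $2n-1$ of $X_{n,1}$, and type (ii) at $j=2n-1$, $k=n$ gives the extremal ray $ne_n+(i-n+1)e_{n-1}$. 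When $M+n\leq i\leq M+2n-1$, the lower bound $j\geq i-M$ also bites, and for each $k\in[i-M-n+1,n]$ the valid range is $j\in[i-M,n+k-1]$; the two endpoints yield exactly the two families of generators in the statement.

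The main obstacle is the bookkeeping: one must check that no extremal rays beyond those listed are produced and that the per-$k$ planar cones assemble cleanly into the stated sum (with rays collapsing in degenerate cases, e.g.\ at $k=i-M-n+1$ in Case 4, where the two endpoint rays coincide). Both points reduce to the monotonicity statement above, together with the observation that for each fixed $k$ the pushforwards are supported on the two coordinates $e_k,e_{k-1}$, so extremality in the sum can be verified one $k$ at a time. A handful of binomial-coefficient identities at the transitions $i=n-1$, $i=2n-1$, and $i=M+n-1$ then confirm that the four formulas agree with the adjacent ones along their common boundaries.
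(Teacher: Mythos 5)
Your argument is correct and follows essentially the same route as the paper, which proves the proposition simply by applying the product formula (Proposition \ref{productFormula}) to the generators of $\Psef_j(X_{n,1})$ recorded in (\ref{564}); your monotonicity observation on the ratio $(i-k+1)/(j-k+1)$ supplies exactly the bookkeeping the paper leaves implicit, and your endpoint analysis reproduces all four cases, including the degenerate coincidence of rays at $k=i-M-n+1$. The one ingredient you assert without proof, the K\"unneth-type decomposition $\Psef_i(X_{n,1}\times Y_{n,d})=\sum_{j+k=i}\Psef_j(X_{n,1})\times\langle[\Lambda_k]\rangle$, is standard for products with projective space (slice with general linear subspaces and push forward) and is used implicitly by the paper as well.
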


The other cone of this sort we can determine is the pushforward of the effective cones of $X_{2,2}\times Y_{2,d}$.  
Applying the product formula to Corollary \ref{conicCorollary} gives the following.
\begin{proposition}\label{2Products}
Set $M=\dim(Y_{2,d})=\binom{d+2}{2}-1$. If $6\leq i\leq M+3$, then
\[
\mu_*(\Psef_i(X_{2,2}\times Y_{2,d}))=\langle (5,2(i-1),0), (0,1,0), (0,0,1)\rangle.
\]
Outside of this range, we have
\begin{align*}
\mu_*(\Psef_1(X_{2,2}\times Y_{2,d}))&=\langle(0,1,0), (0,0,1)\rangle\\
\mu_*(\Psef_2(X_{2,2}\times Y_{2,d}))&=\langle(1,1,0), (0,1,0), (0,0,1)\rangle\\
\mu_*(\Psef_3(X_{2,2}\times Y_{2,d}))&=\langle(1,1,0), (0,1,0), (0,0,1)\rangle\\
\mu_*(\Psef_4(X_{2,2}\times Y_{2,d}))&=\langle(2,3,0), (0,1,0), (0,0,1)\rangle\\
\mu_*(\Psef_5(X_{2,2}\times Y_{2,d}))&=\langle(1,2,0), (0,1,0), (0,0,1)\rangle\\
\mu_*(\Psef_{M+4}(X_{2,2}\times Y_{2,d}))&=\langle(5,2M+6,0), (1,M+3,0), (0,4, M+4), (0,0,1)\rangle\\
\mu_*(\Psef_{M+5}(X_{2,2}\times Y_{2,d})&=\langle(4,2M+8,0),  (5,2M+8,0),(0, 5, 2M+10)\rangle\\
\mu_*(\Psef_{M+6}(X_{2,2}\times Y_{2,d}))&=\langle(5,2M+10,0)\rangle.
\end{align*}
\end{proposition}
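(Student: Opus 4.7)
The statement is a direct calculation via the product formula; my plan is to apply it to each extremal generator of the product cone and collect.

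The first observation is that $Y_{2,d}\cong \PP^M$ has $N_t(Y_{2,d})$ one-dimensional for every $0\le t\le M$, spanned by any $t$-plane $[\Lambda_t]$. By the K\"unneth decomposition for a product with projective space,
\[
\Psef_i(X_{2,2}\times Y_{2,d})=\sum_{j}\Psef_j(X_{2,2})\otimes\Psef_{i-j}(Y_{2,d}),
\]
with the sum running over $0\le j\le\min(i,6)$ and $0\le i-j\le M$. Thus it suffices to compute $\mu_*(\eta\times[\Lambda_{i-j}])$ as $\eta$ ranges over the extremal rays of $\Psef_j(X_{2,2})$ for each admissible $j$, and to collect.

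Next I assemble the list of extremal rays of $\Psef_j(X_{2,2})$ for $0\le j\le 6$: these are supplied by Corollary \ref{conicCorollary} for $1\le j\le 5$; $\Psef_0(X_{2,2})=\langle(0,0,1)\rangle$ is the point class; and $\Psef_6(X_{2,2})=\langle[X_{2,2}]\rangle$ is the fundamental class, whose covariant coordinates $(1,2,0)$ are computed from the relations $H^3=0$ and $\xi^5=H\xi^4-H^2\xi^3$ on $X_{2,2}$. Applying Proposition \ref{productFormula} with $n=2$, $s=j$, $t=i-j$ and $d=1$ to each generator $(\eta_2,\eta_1,\eta_0)$ yields
\[
\mu_*\bigl((\eta_2,\eta_1,\eta_0)\times[\Lambda_{i-j}]\bigr)=\left(\binom{i-2}{i-j}\eta_2,\;\binom{i-1}{i-j}\eta_1,\;\binom{i}{i-j}\eta_0\right).
\]

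What remains is to organize and simplify across the three ranges of $i$ in the statement. For $i\le 5$ and for $i\ge M+4$ only a handful of values of $j$ are admissible, and the six boundary cones are recovered by direct substitution into the formula above. In the middle stable range $6\le i\le M+3$, the key checks are that every pushforward lies in $\langle(5,2(i-1),0),(0,1,0),(0,0,1)\rangle$ and that each of the three generators is actually attained. The extremal ray $(5,2(i-1),0)$ arises from the $j=6$ pushforward of $[X_{2,2}]=(1,2,0)$, producing a positive multiple of $(5,2(i-1),0)$ via the binomial ratio $\binom{i-1}{i-6}/\binom{i-2}{i-6}=(i-1)/5$. The rays $(0,1,0)$ and $(0,0,1)$ come from $\Psef_1(X_{2,2})$. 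For every other generator $(1,k,0)\in\Psef_j(X_{2,2})$ ($j\in\{2,3,4,5\}$, $k\in\{1,3/2,2,3\}$), the identity $\binom{i-1}{i-j}/\binom{i-2}{i-j}=(i-1)/(j-1)$ reduces containment in the claimed cone to the inequality $k\ge 2(j-1)/5$, which is verified case by case. Images of $(0,*,*)$-type generators trivially lie in the face $\langle(0,1,0),(0,0,1)\rangle$.

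The main obstacle is the bookkeeping at the boundary transitions $i\in\{5,6\}$ and $i\in\{M+3,M+4\}$: one must verify at each transition that every listed generator in the adjacent cones actually arises from some product of extremal rays, and conversely that no extra extremal generators appear. Two cases require particular care — the attainment of $(0,4,M+4)$ from the image of $(0,1,1)\in\Psef_4(X_{2,2})$ paired with $[\Lambda_M]$, and of $(0,5,2M+10)$ from the image of $(0,1,2)\in\Psef_5(X_{2,2})$ paired with $[\Lambda_M]$ — each of which matches the claimed generator only after simplifying a specific ratio of consecutive binomial coefficients. These are finite checks, but they must be carried out explicitly to confirm that the collapse in the middle range truly leaves only three extremal rays, and that the high-range cones genuinely gain new extremal rays when $i$ crosses $M+3$.
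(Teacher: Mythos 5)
Your plan is essentially the paper's own proof, which consists of the single sentence ``Applying the product formula to Corollary \ref{conicCorollary} gives the following'': the K\"unneth decomposition of $\Psef_i(X_{2,2}\times Y_{2,d})$, the list of generators for $j\le 6$, and the binomial-ratio bookkeeping all check out case by case, including the boundary cones at $i\in\{M+4,M+5,M+6\}$. One slip worth correcting: the relation on $X_{2,2}$ is $\xi^5=2H\xi^4-4H^2\xi^3$ (with $H$ the hyperplane class of $\PP^2$, the bundle $E$ has $c(E)=1/(1-2H)$), not $\xi^5=H\xi^4-H^2\xi^3$; your stated conclusion $[X_{2,2}]=(1,2,0)$ is nonetheless right, since $H\xi^5=2H^2\xi^4=2$, whereas the relation as you wrote it would give $(1,1,0)$ and would then be inconsistent with the extremal ray $(5,2(i-1),0)$ you correctly derive from the $j=6$ pushforward.
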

\bibliographystyle{plain}
\bibliography{citations}
\end{document}